\title[Diederich--Fornaess and Steinness indices]{Diederich--Forn{\ae}ss and Steinness indices for abstract CR manifolds}
\author{Masanori Adachi}
\address{Department of Mathematics, Faculty of Science, Shizuoka University.  836 Ohya, Suruga-ku, Shizuoka 422-8529, Japan.}
\email{adachi.masanori@shizuoka.ac.jp}
\thanks{The first author is partially supported by a JSPS KAKENHI Grant Number JP18K13422.}
\author{Jihun Yum}
\address{Department of Mathematics, Pusan National University. 2, Busandaehak-ro 63beon-gil, Geumjeong-gu, Busan 46241, Republic of Korea.}
\email{jihun0224@pusan.ac.kr}
\thanks{The second author is supported by the National Research Foundation (NRF) of Korea grant funded by the Korea government (No. 2018R1C1B3005963).}
\keywords{Pseudoconvexity, Plurisubharmonic function, Diederich-Fornaess index, Steinness index, D'Angelo 1-form}
\date{\today}
\subjclass[2010]{32T27, 32U10, 32V15}
\newtheorem*{Claim}{Claim}
\newtheorem*{Question}{Question}
\newtheorem{Theorem}{Theorem}
\newtheorem{Proposition}{Proposition}[section]
\newtheorem{Definition}[Proposition]{Definition}
\newtheorem{Lemma}[Proposition]{Lemma}
\newtheorem{Corollary}[Proposition]{Corollary}
\theoremstyle{remark}
\newtheorem{Remark}[Proposition]{Remark}
\newtheorem{Example}[Proposition]{Example}
\newcommand\C{\mathbb{C}}  
\newcommand\R{\mathbb{R}}
\newcommand\D{\mathbb{D}}
\renewcommand\Re{\operatorname{Re}}
\newcommand{\base}[1]{\frac{\pa}{\pa #1}}
\newcommand{\pa}{\partial}
\newcommand{\opa}{\overline\pa}
\newcommand{\ol}{\overline }
\def\RR{\mathbb{R}} 
\def\CC{\mathbb{C}} 
\def\O{\Omega} 
\def\Lie{\mathcal{L}}  
\def\Null{\mathcal{N}}    
\begin{document}

\maketitle

\begin{abstract}
	We propose the concept of Diederich--Forn{\ae}ss and Steinness indices on compact pseudoconvex CR manifolds of hypersurface type in terms of the D'Angelo 1-form. 
	When the CR manifold bounds a domain in a complex manifold, under certain additional non-degeneracy condition, those indices are shown to coincide with the original Diederich--Forn{\ae}ss and Steinness indices of the domain, and CR invariance of the original indices follows.
\end{abstract}

\section{\bf Introduction}
In geometric analysis on complex manifolds, it is often crucial to extract plurisubharmonic 
functions from given geometric conditions. 
The celebrated example is Oka's lemma \cite{OkaVI} stating that 
the logarithm of the reciprocal of the distance to pseudoconvex hypersurfaces in Euclidean spaces is plurisubharmonic, which was an important step in the solution to the Levi problem
for unramified domains over Euclidean spaces.

Diederich and Fornaess \cite{diederich-fornaess} strengthened this result showing 
that any smoothly bounded 
pseudoconvex domain $\O$ in a Stein manifold admits negative strictly  plurisubharmonic
function in $\O$ which is bounded exhaustive. 
Roughly speaking, the \emph{Diederich-Fornaess index} of the domain
is the supremum of the H\"older exponents of these exhaustions near the boundary, 
and measures how well the pseudoconvex boundary can be approximated by strictly 
pseudoconvex hypersurfaces from inside of the domain. 

In \cite{Yum1}, the second author introduced the \emph{Steinness index} 
as its counterpart in approximations from the outside of the domain. 
Roughly speaking, the Steinness index of the domain $\O$
is the infimum of the H\"older exponents of positive strictly plurisubharmonic functions 
in $\ol{\O}^{\complement}$ which approaches to zero on the boundary $M = \partial\O$. 

For smoothly bounded pseudoconvex domains in $\C^n$, Liu \cite{Liu1} found a differential inequality on the set of weakly pseudoconvex points that expresses the Diederich--Forn{\ae}ss index of the domain. 
Exploiting Liu's idea, the second author \cite{Yum1}, \cite{Yum2} completely characterized 
both the Diederich--Forn{\ae}ss and Steinness indices of smoothly bounded pseudoconvex domains in $\C^n$ by inequalities in terms of a 1-form, called the \emph{D'Angelo 1-form} (See \S\ref{sect:d'angelo} for definition), which is a CR invariant of the boundary. 

On the other hand, the first author \cite{Adachi1}, \cite{Adachi2} studied the Diederich--Forn{\ae}ss index for smoothly bounded domains with Levi-flat boundary, and noticed that the index can be non-trivial, and is a CR invariant of the Levi-flat boundary when the normal bundle of the Levi-flat real hypersurface is positive (See \S\ref{sect:levi-flat} for its detail). Note that it is impossible for smoothly bounded domains in $\C^n$  to have entire Levi-flat boundary, hence, this kind of domains must live in complex manifolds. 

Now it is natural to ask the following question. 
\begin{Question}
Consider smoothly bounded pseudoconvex domains in complex manifolds.  
Is it true that the Diederich--Forn{\ae}ss and Steinness indices of the domain are determined by CR structure of the boundary? 
\end{Question}

To approach this question, we define the Diederich--Forn{\ae}ss and Steinness 
indices for abstract compact CR manifolds of hypersurface type
based on the formulae found in \cite{Yum2}. 
As we will see later by a simple counterexample (Example \ref{ex:counter-example}), we need to distinguish whether positivity or semi-positivity is required to define the indices, hence, we have the following 4 kinds of definitions for each index (See Definition \ref{def:DF,S indices} for precise definitions):
\begin{itemize}
\item $DF_s(\Omega)$, $S_s(\Omega)$: the Diederich--Forn{\ae}ss and Steinness indices in the strong sense for a smoothly bounded domain $\Omega$;
\item $DF_w(\Omega)$, $S_w(\Omega)$: those in the weak sense for a  smoothly bounded domain $\Omega$;
\item $DF_s(M)$, $S_s(M)$: those in the strong sense for a compact pseudoconvex CR manifold $M$ of hypersurface type;
\item $DF_w(M)$, $S_w(M)$: those in the weak sense for a compact pseudoconvex CR manifold $M$ of hypersurface type.
\end{itemize}

Our first theorem gives a general relation among these indices. 

\begin{Theorem} 
\label{thm:4 kinds of DF,S-relation}
	Let $\O$ be a relatively compact domain in a complex manifold $\widetilde{M}$ with smooth pseudoconvex boundary $M$. Then the following inequalities hold. 
	\begin{align*}
		0 \le DF_s(M) \le DF_s(\O) & \le DF_w(\O) \le DF_w(M) \le 1, \\
		1 \le S_w(M) \le S_w(\O) & \le  S_s(\O) \le S_s(M) \le \infty.	
	\end{align*}
\end{Theorem}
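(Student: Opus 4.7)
The plan is to split the eight inequalities in the two chains into three groups of distinct character: the extremal bounds, the two ``strong versus weak'' comparisons in the middle of each chain, and the four substantive inequalities comparing the domain and its boundary CR manifold.

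The extremal bounds should be immediate from the upcoming Definition \ref{def:DF,S indices}: the admissible H\"older exponents for the Diederich--Forn{\ae}ss index lie in $[0,1]$ and those for the Steinness index lie in $[1,\infty]$, reflecting the unavoidable logarithmic behavior at the boundary. The two middle inequalities $DF_s(\O) \le DF_w(\O)$ and $S_w(\O) \le S_s(\O)$ are tautological: a strictly plurisubharmonic exhaustion of $\O$ or of its complement is in particular plurisubharmonic, so the class of weak witnesses contains the strong ones; since $DF$ is defined as a supremum and $S$ as an infimum over allowed exponents, this containment yields the stated inequality in each case.

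The heart of the argument concerns the four remaining inequalities, namely $DF_s(M) \le DF_s(\O)$ and $DF_w(\O) \le DF_w(M)$ together with the analogous pair for $S$. My approach is to use the identity expressing the complex Hessian of $-(-\rho)^\eta$ (respectively $\rho^{1/\eta}$ in the Steinness case) in terms of $\partial\bar\partial\rho$ and $\partial\rho\wedge\bar\partial\rho$, and then to invoke the fact that $\partial\rho$ restricted to $T^{1,0}M$ represents the D'Angelo 1-form $\alpha$ of $M$, up to the normalization developed in \S\ref{sect:d'angelo}. For $DF_w(\O) \le DF_w(M)$ and $S_w(M) \le S_w(\O)$, I would start from a defining-function witness of the corresponding index for $\O$ and pass to the limit $\rho \to 0$ from the appropriate side, reading off the resulting inequality in terms of $\alpha$ as a weak-sense CR witness on $M$. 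For $DF_s(M) \le DF_s(\O)$ and $S_s(\O) \le S_s(M)$, I would go in the reverse direction: given a strict CR witness on $M$, extend it to a tubular neighborhood to produce a defining function of $\O$ whose Hessian inequality is strict on a one-sided collar of $M$ by continuity, hence witnesses the strong domain index with the same exponent.

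The main obstacle, and the point that forces the asymmetric placement of strong and weak in each chain, is the careful bookkeeping of strictness in these two passages. Going outward from $\O$ to $M$, a strict Hessian inequality in the interior can degenerate in the boundary limit, so one only obtains a weak-sense inequality on $M$; this is why $DF_s(\O)$ controls only $DF_w(M)$ from below and not $DF_s(M)$. Going inward from $M$ to $\O$, strictness of a CR witness on the compact hypersurface $M$ persists on a one-sided neighborhood, so it transfers to a strong domain witness. A secondary technical point is to ensure that the chosen extensions off $M$ remain compatible with the H\"older exponent $\eta$ near the boundary; a standard construction using a boundary defining function and a normal collar should handle this without additional effort.
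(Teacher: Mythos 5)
Your overall architecture matches the paper's: the extremal bounds and the two middle inequalities are definitional, and the four substantive inequalities are handled by a pair of ``domain $\to$ boundary'' limiting lemmas for the weak indices and a pair of ``boundary $\to$ domain'' extension lemmas for the strong ones, with the asymmetry explained exactly as you explain it. However, there are two concrete gaps in the analytic core. First, $\partial\rho$ restricted to $T^{1,0}_M$ does not represent the D'Angelo form --- it vanishes identically, since $T^{1,0}_M=\ker\partial\rho$. The D'Angelo $(1,0)$-form is $\iota_T\partial\bar\partial\rho$ restricted to $T^{1,0}_M$ (locally $\partial\log(\partial\rho/\partial\bar z_n)$), i.e.\ it involves \emph{second} derivatives of $\rho$, and the forms $\bar\partial_b\omega_\rho$ and $\omega_\rho\wedge\bar\omega_\rho$ entering the definitions of $DF(M)$ and $S(M)$ involve third derivatives. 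Getting this identification right is the whole content of \S 3 and is what makes the limits below come out in terms of CR-invariant data.

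Second, and more seriously, ``pass to the limit $\rho\to 0$'' and ``strictness persists by continuity'' do not suffice, because the complex Hessian of $-(-\rho)^\gamma$ (suitably normalized) has entries diverging at different rates: the normal--normal entry blows up like $\rho^{-2}$, the normal--tangential and Levi-nondegenerate entries like $\rho^{-1}$, while only the null--null block stays bounded. If you merely restrict to a null vector $L\in\Null_p$ and take the boundary limit, you obtain $\bar\partial_b\omega_\rho-\gamma\,\omega_\rho\wedge\bar\omega_\rho\ge 0$, which is strictly weaker than the required $\bar\partial_b\omega_\rho-\tfrac{\gamma}{1-\gamma}\,\omega_\rho\wedge\bar\omega_\rho\ge 0$. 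The correct coefficient $\tfrac{\gamma}{1-\gamma}$ arises only from the interaction with the normal direction: in the weak direction one must take the limit of $\rho^2\det H$ for the $2\times 2$ minor in the null and normal directions, and in the strong direction one must verify positive definiteness of the full asymptotic block matrix (null block, Levi-positive block scaled by $\rho^{-1}$, normal entry scaled by $\rho^{-2}$, with the off-diagonal terms $\tfrac{i\gamma\omega_\eta}{2|\rho|}$ absorbed via a Schur-complement argument), uniformly over the compact boundary. Without this bookkeeping the constants in the CR-side inequalities are wrong, and the claimed inequalities between the domain indices and the CR indices do not follow.
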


Our second theorem gives sufficient conditions for these 4 kinds of indices to agree, hence, answers our Question affirmatively for some typical pseudoconvex domains (See Corollary \ref{cor:typical-cases}). 

\begin{Theorem}
\label{thm:DF,S-all same}
Let $\O$ be a relatively compact domain in a complex manifold $\widetilde{M}$ with smooth pseudoconvex boundary $M$.
Assume that there exists a positive trivialization $\eta_1$ of $\C \otimes TM/T^{1,0}_M \oplus T^{0,1}_M$ that satisfies either $\opa_b \omega_{\eta_1} > 0$ on $\Null$ or $\opa_b \omega_{\eta_1} < 0$ on $\Null$. 
(For precise meaning of these notions, see \S\ref{sect:d'angelo}.) Then, it holds that 
\begin{align*}
	DF_s(M) = DF_s(\O)& =  DF_w(\O) = DF_w(M), \\
	S_w(M) = S_w(\O)& = S_s(\O) = S_s(M).
\end{align*}
\end{Theorem}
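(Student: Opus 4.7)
By Theorem~\ref{thm:4 kinds of DF,S-relation}, all four Diederich--Forn{\ae}ss indices lie in the chain
\[
DF_s(M) \le DF_s(\O) \le DF_w(\O) \le DF_w(M),
\]
and the four Steinness indices lie between $S_w(M)$ and $S_s(M)$. It therefore suffices to establish the two outer equalities $DF_s(M) = DF_w(M)$ and $S_s(M) = S_w(M)$: the intermediate indices are then pinched. In each case the nontrivial content is promoting a near-optimal weak competitor on the CR manifold $M$ to a strong one, at the cost of an arbitrarily small loss in the H\"older exponent.

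To establish $DF_s(M) \ge DF_w(M)$, fix $\epsilon > 0$ and choose a positive trivialization $\eta$ realising the weak DF exponent to within $\epsilon$, so that $\opa_b \omega_\eta$ satisfies the weak $\alpha$-inequality for $\alpha := DF_w(M) - \epsilon$. Since $\eta_1$ and $\eta$ are positive trivializations of the same line bundle, $\eta_1 = e^\psi \eta$ for some $\psi \in C^\infty(M; \R)$. I would consider the deformation $\eta_t := e^{t\psi}\eta$ and use the transformation law for the D'Angelo 1-form under positive rescaling, so that $\opa_b \omega_{\eta_t}$ admits a first-order expansion whose linear term in $t$ is, up to constants, $\opa_b \omega_{\eta_1} - \opa_b \omega_\eta$. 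For $|t|$ sufficiently small, with the sign of $t$ matched to the sign case of the hypothesis, I would verify: (i) on a small open neighbourhood $U$ of $\Null$, the strict sign of $\opa_b \omega_{\eta_1}$ at each point of $\Null$ extends by continuity and compactness to strict positivity of the whole $(1,1)$-form $\opa_b \omega_{\eta_1}$ on $U$, so the perturbation with the chosen sign of $t$ produces strict positivity of $\opa_b \omega_{\eta_t}$ there; (ii) on $M \setminus U$ the Levi form is uniformly bounded below, so for $|t|$ small the strong $(\alpha - \epsilon)$-inequality holds automatically. A partition of unity combines these into a single strong competitor, and letting $\epsilon \to 0$ yields $DF_s(M) \ge DF_w(M)$. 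The Steinness equality follows from a parallel argument with reversed inequalities; both sign cases of the hypothesis cover both equalities, since we are free to choose the sign of $t$.

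The main obstacle is the coordinated choice of $t$: it must be small enough to preserve the H\"older exponent globally on $M \setminus U$, yet nontrivial enough to activate the $\eta_1$-defect across the whole of $U$. This is why the hypothesis demands strict positivity (or negativity) of the full Hermitian form $\opa_b \omega_{\eta_1}$ over $\Null$, not merely along Levi-null directions: only then does persistence under small perturbations propagate to the whole $(1,1)$-form on a neighbourhood of $\Null$, giving strict positivity in precisely those directions where the weak candidate $\omega_\eta$ was degenerate. Controlling this balance quantitatively, and verifying that the uniform bounds obtained from compactness on $M \setminus U$ and from the hypothesis on $U$ can be reconciled with a single Hölder exponent loss of order $\epsilon$, is the technical core of the argument.
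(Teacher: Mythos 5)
Your overall skeleton matches the paper's: reduce via Theorem \ref{thm:4 kinds of DF,S-relation} to the two outer inequalities $DF_w(M)\le DF_s(M)$ and $S_s(M)\le S_w(M)$, prove these by deforming a near-optimal weak trivialization $\eta$ toward $\eta_1$ along the conformal family $\eta_t=e^{t\psi}\eta$ (this is exactly the paper's $\eta_\epsilon=|\varphi|^{\epsilon}\eta_0$ of Lemma \ref{lem:D'Angelo 1-form-perturbation}, which gives $\omega_{\eta_t}=(1-t)\omega_\eta+t\,\omega_{\eta_1}$ on $\Null$), choose the sign of $t$ according to the sign case of the hypothesis, and accept an arbitrarily small loss in the exponent. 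That much is correct.

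The gap is at what you yourself flag as the technical core, and you have misidentified where the difficulty lies. The quantity that must become strictly positive on $\Null$ is not $\opa_b\omega_{\eta_t}$ but
\[
\opa_b\omega_{\eta_t}-\frac{\gamma}{1-\gamma}\,\omega_{\eta_t}\wedge\ol{\omega}_{\eta_t},
\qquad \omega_{\eta_t}=(1-t)\omega_\eta+t\,\omega_{\eta_1},
\]
and expanding the wedge produces cross terms $-\frac{\gamma}{1-\gamma}\,t(1-t)\bigl(\omega_\eta\wedge\ol{\omega}_{\eta_1}+\omega_{\eta_1}\wedge\ol{\omega}_\eta\bigr)$ of indefinite sign and of the \emph{same} order $O(t)$ as the gain $t\,\opa_b\omega_{\eta_1}$; so ``take $|t|$ small'' does not close the argument by itself. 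The paper handles this with the weighted Cauchy--Schwarz bound $\pm\bigl(\omega_\eta\wedge\ol{\omega}_{\eta_1}+\omega_{\eta_1}\wedge\ol{\omega}_\eta\bigr)\le |t|^{-1/2}\,\omega_\eta\wedge\ol{\omega}_\eta+|t|^{1/2}\,\omega_{\eta_1}\wedge\ol{\omega}_{\eta_1}$: the first piece is absorbed into the slack coming from taking $\gamma$ strictly below the weak exponent $\gamma_0$, and the second piece is $o(t)$ and is beaten by $t\,\opa_b\omega_{\eta_1}\ge tc>0$, the constant $c$ coming from compactness of $M$. Your proposal contains no substitute for this estimate. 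Separately, your localization scheme is misdirected: the forms $\opa_b\omega_\eta$ and $\omega_\eta\wedge\ol{\omega}_\eta$ are canonically defined only on $\Null$ (Lemmas \ref{lem:connection} and \ref{lem:curvature}), and the CR indices are defined by inequalities on $\Null$ alone, so no neighbourhood $U$ of the weakly pseudoconvex set, no lower bound on the Levi form off $U$, and no partition of unity are needed --- and gluing trivializations by a partition of unity would itself require justification, since $\eta\mapsto\opa_b\omega_\eta$ is not linear under such patching. Finally, the hypothesis $\opa_b\omega_{\eta_1}>0$ on $\Null$ is an assumption precisely along the Levi-null directions; your closing remark that it concerns the ``full'' form ``not merely along Levi-null directions'' misreads it.
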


\setcounter{Theorem}{0}

Theorems \ref{thm:4 kinds of DF,S-relation} and \ref{thm:DF,S-all same} 
not only give a generalization of previously known formulae for the indices, 
but also give an alternate proof for the formulas discussed in \cite{Liu1}, \cite{Yum1} and \cite{Yum2}.
The formulae in these works were derived via the differential geometric argument of Liu \cite{Liu1}, 
and the proof relied on the fact that Euclidean metric is torsion-free and has flat curvature. 
Our proof does not use hermitian metrics nor Chern connection, and 
shows that this differential geometric property of the ambient space $\widetilde{M} = \C^n$ is not actually needed.

Also, we would like to emphasize that the Kohn weight, $\Vert z \Vert^2$ on $\C^n$, played an important role in the proof of the formula for the Diederich--Forn{\ae}ss index in \cite{Liu1}.
Theorem \ref{thm:DF,S-all same} relaxes this point by replacing the existence of strictly plurisubharmonic exhaustion on the ambient manifold $\widetilde{M}$ with non-degeneracy of $\opa_b \omega_{\eta}$, which is a condition on third order derivatives of defining functions when $M$ is realized as a real hypersurface. We do not know whether CR invariance of the Diederich--Forn{\ae}ss and Steinness indices for smoothly bounded pseudoconvex domains is true without such an additional assumption on the boundary.

The organization of this paper is as follows. We recall the definition of D'Angelo 1-form in \S\ref{sect:d'angelo}, in particular, show that it has CR invariance for the null direction of the Levi form. In \S3, we express the D'Angelo 1-form in coordinates when a CR manifold is embedded in a complex manifold as a real hypersurface. Using this expression, we prove Theorem \ref{thm:4 kinds of DF,S-relation} in \S4. In \S5, we prove Theorem \ref{thm:DF,S-all same} that gives some sufficient conditions for the indices for domains and CR manifolds to agree. In \S6, we explain the geometric meaning of our formula when our domain has Levi-flat boundary. 

\subsection*{Acknowledgement}
The authors would like to thank Kengo Hirachi for suggesting Remark \ref{rem:partially integrable}. 

\section{\bf D'Angelo 1-form}
\label{sect:d'angelo}
In this section, we recall the definition of D'Angelo 1-form, which was introduced by D'Angelo in \cite{D'Angelo0}, \cite{D'Angelo}, and developed by Boas and Straube \cite{Boas-Straube}. 

Let $M$ be an orientable smooth real manifold of dimension $2n+1$. 
We say that $M$ is an \emph{almost CR manifold of hypersurface type} 
if $M$ is equipped with a complex subbundle $T^{1,0}_M \subset \C \otimes TM$ of rank $n$ 
such that $T^{1,0}_M \cap {T^{0,1}_M} = \{ 0\}$, where $T^{0,1}_M := \ol{T^{1,0}_M}$. 
An almost CR manifold $M$ is said to be \emph{CR manifold} if the integrability condition
\begin{equation}
\label{eq:integrability}
[\Gamma(T^{0,1}_M), \Gamma(T^{0,1}_M)] \subset \Gamma(T^{0,1}_M)
\end{equation}
is fulfilled where $\Gamma$ denotes the set of smooth sections. 

Since our CR manifold $M$ is always assumed to be orientable, the bundle $\C \otimes TM/ (T^{1,0}_M \oplus T^{0,1}_M)$ is smoothly trivial, and
there exists a global trivialization of this bundle given by a purely-imaginary non-vanishing smooth 1-form $\eta$ on $M$ that annihilates $T^{1,0}_M \oplus T^{0,1}_M$. 
We call such $\eta$ as a \emph{trivialization} of $\C \otimes TM/ (T^{1,0}_M \oplus T^{0,1}_M)$ for simplicity. 
We have choices of $\eta$ up to the multiple of non-vanishing smooth real functions.

\begin{Definition}
The \emph{Levi form} at $p\in M$ is a hermitian form $\lambda_\eta \colon T^{1,0}_{M,p} \times T^{1,0}_{M,p} \to \C$ given by 
\[
\lambda_\eta(L_p, L'_p) := - d\eta(L_p, \ol{L'}_p) = \eta ([L, \ol{L'}]_p) 
\]
where $L, L' \in \Gamma(T^{1,0}_M)$ are arbitrary smooth extension of $L_p, L'_p \in T^{1,0}_{M, p}$. 
\end{Definition}

We denote the kernel of the Levi form by $\Null = \bigcup_{p \in M} \Null_p \subset T^{1,0}_M$ where
\[
\Null_p := \{ L_p \in T^{1,0}_{M,p} \mid \lambda_\eta(L_p, L'_p) = 0 \quad \forall L_p' \in T^{1,0}_{M,p} \}
\]
for each $p \in M$. Note that $\Null$ is independent of the choice of $\eta$. 
When $\lambda_\eta(L_p, L_p) \geq 0$ for any $p\in M$ and $L_p \in T^{1,0}_{M,p}$, 
$M$ is said to be \emph{pseudoconvex} and we call $\eta$ a \emph{positive} trivialization. 
The signature of $\lambda_\eta$ at any point is invariant under multiplication of $\eta$ by positive real functions,
and so is the pseudoconvexity. 
By the abuse of notation, we extend the Levi form over $T^{1,0}_M \oplus T^{0,1}_M$ as 
\[
\lambda_\eta(X_p, Y_p) := \eta ([X, \ol{Y}]_p) 
\]
for any $X, Y \in \Gamma(T^{1,0}_M \oplus T^{0,1}_M)$.
From the integrability condition (\ref{eq:integrability}) which we assumed for CR manifolds, it is clear that the following holds. 

\begin{Lemma} \label{lem:orthogonal}
	If $L_p \in \Null_p$, then $\lambda_{\eta}(L_p, X_p) = 0$ for all $X_p \in T^{1,0}_{M,p} \oplus T^{0,1}_{M,p}$.
\end{Lemma}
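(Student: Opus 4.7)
The plan is to decompose $X_p$ into its $(1,0)$ and $(0,1)$ components and verify that each piece contributes zero to $\lambda_\eta(L_p, X_p)$. Write $X_p = L'_p + \overline{L''_p}$ with $L'_p, L''_p \in T^{1,0}_{M,p}$, and choose smooth extensions $L, L', L'' \in \Gamma(T^{1,0}_M)$. From the definition of the extended Levi form,
\[
\lambda_\eta(L_p, X_p) = \eta\bigl([L, \overline{X}]_p\bigr) = \eta\bigl([L, \overline{L'}]_p\bigr) + \eta\bigl([L, L'']_p\bigr).
\]

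For the first term, $\eta([L, \overline{L'}]_p)$ is exactly the original Levi form value $\lambda_\eta(L_p, L'_p)$, which vanishes by the hypothesis $L_p \in \Null_p$.

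For the second term, observe that the integrability condition (\ref{eq:integrability}) stated for $T^{0,1}_M$, taken with complex conjugates, yields $[\Gamma(T^{1,0}_M), \Gamma(T^{1,0}_M)] \subset \Gamma(T^{1,0}_M)$. Hence $[L, L'']$ is a section of $T^{1,0}_M$, and since $\eta$ annihilates $T^{1,0}_M \oplus T^{0,1}_M$, we conclude $\eta([L, L'']_p) = 0$. Combining the two vanishings gives the claim.

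The argument is a direct unfolding of the definitions, so I do not anticipate a real obstacle. The only conceptual point worth highlighting is that extending the Levi form to the full bundle $T^{1,0}_M \oplus T^{0,1}_M$ introduces the new bracket type $[T^{1,0}_M, T^{1,0}_M]$, and it is precisely the conjugate form of integrability that keeps this bracket inside $T^{1,0}_M$ so that $\eta$ kills it. Without the CR integrability assumption, the lemma would generally fail, which is consistent with the fact that the statement is phrased for CR manifolds rather than merely almost CR manifolds.
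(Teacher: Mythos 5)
Your proof is correct and is exactly the unpacking of the argument the paper leaves implicit when it says the lemma is ``clear'' from the integrability condition: the $(0,1)$-part of $\overline{X}$ gives the ordinary Levi form, killed by $L_p \in \Null_p$, and the $(1,0)$-part gives a bracket of two $(1,0)$-fields, killed by $\eta$. Note only that, as the paper's Remark~\ref{rem:partially integrable} points out, you do not need the bracket $[L,L'']$ to land in $\Gamma(T^{1,0}_M)$ — landing in $\Gamma(T^{1,0}_M \oplus T^{0,1}_M)$ (partial integrability) already suffices since $\eta$ annihilates the whole sum.
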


\begin{Remark}
\label{rem:partially integrable}
In this paper, we exploit the integrability condition (\ref{eq:integrability}) only via Lemma \ref{lem:orthogonal}. In fact, the partial integrability condition
\begin{equation}
\label{eq:partial integrability}
[\Gamma(T^{1,0}_M), \Gamma(T^{1,0}_M)] \subset \Gamma(T^{1,0}_M \oplus T^{0,1}_M)
\end{equation}
suffices to deduce Lemma \ref{lem:orthogonal}. 
An almost CR manifold $M$ is said to be a \emph{partially integrable CR manifold} if 
$M$ enjoys the partial integrability condition (\ref{eq:partial integrability}) instead of (\ref{eq:integrability}). 
We note that all the definitions and results in this section still work on partially integrable CR manifolds, and the definitions of $DF_s(M), DF_w(M), S_s(M)$ and $S_w(M)$, which will be given in Definition \ref{def:DF,S indices}, apply for compact pseudoconvex partially integrable CR manifolds of hypersurface type.
\end{Remark}

Let $T \in \Gamma(\C \otimes TM)$ be a non-vanishing smooth vector field on $M$ such that $\eta(T) = 1$. 
This complexified vector field $T$ is purely-imaginary and yields a decomposition
\[
\C \otimes TM = T^{1,0}_M \oplus T^{0,1}_M \oplus \C T.
\]
We call such $T$ as a \emph{transversal vector field} normalized with respect to $\eta$.

\begin{Definition}
A \emph{D'Angelo 1-form} $\alpha_{\eta} \colon TM \to \R$ with respect to $\eta$ is defined as a smooth real 1-form
\[
\alpha_{\eta}(X_p) := -(\mathcal{L}_T \eta)(X_p) = -d\eta(T_p, X_p) = \eta([T,X]_p),
\]
where $T$ is a transversal vector field normalized with respect to $\eta$, $X \in  \Gamma(TM)$ is an arbitrary smooth extension of $X_p \in T_pM$ and $\mathcal{L}$ denotes the Lie derivative. 
\end{Definition}

Since $\alpha_{\eta}$ is a real 1-form and enjoys $\alpha_{\eta}(T) = -d\eta(T,T) = 0$, it is natural to consider the complex linear extension of $\alpha_{\eta}$ on $\C \otimes TM$
and decompose it by its $(1,0)$-component $\omega_{\eta}$ and $(0,1)$-component $\ol{\omega}_{\eta}$, that is,
\[
\alpha_{\eta}(X_p) = \omega_{\eta}(L_p) + \ol{\omega}_{\eta}(\ol{L'}_p)
\]
where $X_p = L_p + \ol{L'}_p \in  T^{1,0}_M \oplus T^{0,1}_M$ 
and $\omega_{\eta} \colon \C \otimes TM \to \C$ annihilating $T^{0,1}_M \oplus \C T$.
We call $\omega_{\eta}$ the \emph{D'Angelo $(1,0)$-form} with respect to $\eta$.

Note that for a given trivialization $\eta$, since a transversal vector field $T$ normalized with respect to $\eta$ is not unique, $\omega_{\eta}$ depends on the choice of $T$. However, $\omega_{\eta}$ is well-defined on $\Null$ as follows.

\begin{Lemma}
\label{lem:connection}
The D'Angelo $(1,0)$-form $\omega_{\eta}$ restricted on $\Null$ is independent of the choice of $T$. 
\end{Lemma}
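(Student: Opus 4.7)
The plan is to fix a trivialization $\eta$ and compare the D'Angelo $(1,0)$-form $\omega_\eta$ built from two different choices $T, T'$ of transversal vector field normalized with respect to $\eta$. First I would pin down the possible discrepancy: since $\eta(T' - T) = 0$ and both $T, T'$ are purely imaginary, $T' - T$ is a purely imaginary section of $T^{1,0}_M \oplus T^{0,1}_M$. Splitting into $(1,0)$ and $(0,1)$ parts and imposing $\overline{T' - T} = -(T' - T)$ forces
\[
T' = T + V - \overline{V}
\]
for some $V \in \Gamma(T^{1,0}_M)$.

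Given $L_p \in \Null_p$, I would extend $L_p$ to a local section $L \in \Gamma(T^{1,0}_M)$. On $T^{1,0}_M$ one has $\omega_\eta(L) = \alpha_\eta(L) = \eta([T, L])$, so the two candidate D'Angelo forms differ by
\[
\omega_\eta^{T'}(L)_p - \omega_\eta^{T}(L)_p = \eta([V, L])_p - \eta([\overline{V}, L])_p.
\]
The first term vanishes by (partial) integrability: $[V, L]$ is a section of $T^{1,0}_M \oplus T^{0,1}_M$, on which $\eta$ is identically zero. The second term equals $-\lambda_\eta(L, V)$ at $p$, which vanishes by Lemma \ref{lem:orthogonal} because $L_p \in \Null_p$. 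Adding, the two D'Angelo forms agree at $L_p$, which is the desired conclusion.

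The only subtlety I would watch for is extension-dependence: a priori the bracket $[\overline{V}, L]$ and hence $\eta([\overline{V}, L])_p$ could depend on the germs of $\overline{V}$ and $L$ near $p$, not just on their values at $p$. However, this is precisely the tensorial content of the extended Levi form that makes Lemma \ref{lem:orthogonal} a pointwise statement about $L_p$ and $V_p$, so no ambiguity remains. I do not anticipate any substantial obstacle beyond correctly isolating the parametrization $T' = T + V - \overline{V}$; once that is in hand the argument reduces to the two one-line vanishing checks above.
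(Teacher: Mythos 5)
Your argument is correct and is essentially the paper's proof: the paper simply sets $X := T - T' \in \Gamma(T^{1,0}_M \oplus T^{0,1}_M)$ and observes $\eta([X,L]_p) = \lambda_\eta(X_p,\overline{L}_p) = 0$ by Lemma~\ref{lem:orthogonal}, which already absorbs both of your vanishing checks via the extended Levi form, so the explicit splitting $T'-T = V - \overline{V}$ is a harmless refinement rather than a different route. (Minor point: your second term $-\eta([\overline{V},L])_p$ equals $+\lambda_\eta(L_p,V_p)$, not $-\lambda_\eta(L_p,V_p)$, but either way it vanishes.)
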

\begin{proof}
Let $T$ and $T'$ be transversal vector fields normalized with respect to $\eta$.
Then $X := T-T' \in \Gamma(T^{1,0}_M \oplus T^{0,1}_M)$. 
We take $L_p \in \Null_p$ and its smooth extension $L \in \Gamma(T^{1,0}_M)$. 
Then, 
\[
- (\Lie_{T}\eta)(L_p) + (\Lie_{T'}\eta)(L_p) = \eta([T, L]_p) - \eta([T',L]_p) 
= \lambda_\eta(X_p, \ol{L}_p) = 0
\]
from Lemma \ref{lem:orthogonal}.
\end{proof}

Next, we consider $(1,1)$-form $\opa_b \omega_{\eta}$ on $M$, where $\opa_b$ denotes the tangential Cauchy--Riemann operator. 
We will see its geometric meaning later in Proposition \ref{prop:brunella} and \S\ref{sect:levi-flat}.
We may identify this $(1,1)$-form with $\opa_b \omega_{\eta} \colon  T^{1,0}_M  \times T^{0,1}_M \to \C $ given by 
\[
\opa_b \omega_{\eta} (L_p, \ol{L'}_p) := \ol{L'}_p \omega_{\eta} (L) + \omega_{\eta}([L, \ol{L'}]_p)
\]
where $L, L' \in \Gamma(T^{1,0}_M)$ are arbitrary smooth extensions of $L_p, L'_p \in T^{1,0}_{M, p}$. 

\begin{Lemma}
\label{lem:curvature}
Assume that $M$ is pseudoconvex. Then, the followings hold. 
\begin{enumerate}
\item The $(1,1)$-form $\opa_b \omega_{\eta}$ induces a hermitian form on 
$\Null_p$ for each $p \in M$;
\item This hermitian form is independent of the choice of $T$.
\end{enumerate}
\end{Lemma}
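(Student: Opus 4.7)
My plan is to prove (1) and (2) by direct computation, leveraging a single technical observation that follows from pseudoconvexity. The observation is this: for smooth $T^{1,0}$-extensions $L, L'$ of $L_p, L'_p \in \Null_p$ and any $V \in \C \otimes T_pM$, one has $V\lambda_\eta(L, L')(p) = 0$. Indeed, $\lambda_\eta(L, L)$ is smooth and nonnegative with a minimum at $p$, so its differential vanishes there; polarization handles the mixed case. Separately, the tensoriality of $\opa_b\omega_\eta(L_p, \ol{L'}_p) = \ol{L'}_p\, \omega_\eta(L) + \omega_\eta([L, \ol{L'}]_p)$ in both arguments is a routine Leibniz-rule check using only $\omega_\eta(\ol{L'}_p) = 0$ and requires no hypothesis on $L_p, L'_p$.

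For the hermitian symmetry in (1), I would expand $\alpha_\eta = \omega_\eta + \ol{\omega}_\eta$ to derive
\[
\opa_b\omega_\eta(L, \ol{L'}) - \ol{\opa_b\omega_\eta(L', \ol{L})} = -d\alpha_\eta(L, \ol{L'}).
\]
From $\alpha_\eta = -\Lie_T \eta$ we have $d\alpha_\eta = -\Lie_T d\eta$, and a Cartan-formula expansion reduces $d\alpha_\eta(L, \ol{L'})$ to a linear combination of $T\lambda_\eta(L, L')$, $\lambda_\eta([T, L]^{(1,0)}, L')$, and $\lambda_\eta(L, [T, L']^{(1,0)})$. All three vanish at $p$: the first by the observation above, and the other two by Lemma \ref{lem:orthogonal}.

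For independence from $T$ in (2), let $T'$ be another transversal field normalized to $\eta$. Since $T - T'$ is purely imaginary and annihilated by $\eta$, I write $T - T' = A - \ol{A}$ with $A \in \Gamma(T^{1,0}_M)$. A direct bracket computation gives $\omega_\eta^T(L) - \omega_\eta^{T'}(L) = \lambda_\eta(L, A)$, consistent with Lemma \ref{lem:connection}. Substituting into $\opa_b$ and using $\lambda_\eta(L, L')(p) = 0$ to dispose of an auxiliary term, the difference at $p$ reduces to
\[
\ol{L'}_p \lambda_\eta(L, A) + \lambda_\eta([L, \ol{L'}]^{(1,0)}_p, A_p).
\]
I would rewrite the first summand as $\ol{L'}_p \eta([L, \ol{A}])$ via Cartan's identity for $d\eta(\ol{L'}, [L, \ol{A}])$, and then expand the resulting double bracket using the Jacobi identity $[\ol{L'}, [L, \ol{A}]] = [L, [\ol{L'}, \ol{A}]] + [\ol{A}, [L, \ol{L'}]]$. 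After pairing with $\eta$ and exploiting partial integrability (so that $[\ol{L'}, \ol{A}] \in T^{0,1}_M$), Lemma \ref{lem:orthogonal}, and the observation to kill residual first derivatives of $\lambda_\eta$, only a contribution of $-\lambda_\eta([L, \ol{L'}]^{(1,0)}_p, A_p)$ survives, canceling the second summand.

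The principal obstacle is this final cancellation in (2): it is the only step where the Jacobi identity and the pseudoconvexity-driven vanishing of first derivatives of the Levi form interact nontrivially, and bookkeeping the brackets is the subtle part. Everything else follows from the Cartan formulae and the two already-established lemmas.
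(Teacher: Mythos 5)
Your argument is correct, and it reaches the paper's conclusions by a route that is the same in skeleton but genuinely different in its inputs. For (1), both you and the paper reduce the hermitian symmetry to the vanishing of $d\alpha_\eta(L_p,\ol{L'}_p)$ for $L_p,L'_p\in\Null_p$; the paper simply cites Proposition \ref{prop:boas-straube} at that point, whereas you re-derive it from scratch via $d\alpha_\eta=-\Lie_T d\eta$ together with your key observation that first derivatives of $\lambda_\eta(L,L')$ vanish at $p$ (a correct minimum-principle-plus-polarization argument, which is in fact the engine of the Boas--Straube proof). This buys self-containedness at the cost of redundancy; one bookkeeping point worth recording is that the Cartan expansion of $-\Lie_T d\eta(L,\ol{L'})$ also produces the two terms $-\omega_\eta(L)\ol{\omega}_\eta(\ol{L'})$ and $+\ol{\omega}_\eta(\ol{L'})\omega_\eta(L)$ coming from the $T$-components of $[T,L]$ and $[T,\ol{L'}]$, which cancel but should be displayed before claiming the reduction to your three Levi-form terms. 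For (2), the paper keeps $X=T-T'$ intact and disposes of $\ol{L'}_p\eta([X,L])+\eta([X,[L,\ol{L'}]]_p)$ with one application of the Jacobi identity and Lemma \ref{lem:orthogonal} alone; your decomposition $X=A-\ol{A}$ and the identity $\omega_\eta^{T}-\omega_\eta^{T'}=\lambda_\eta(\cdot,A)$ on $T^{1,0}_M$ (a pointwise refinement of Lemma \ref{lem:connection}) lead to the same cancellation but additionally require your derivative-vanishing observation to kill the term $\ol{A}_p\lambda_\eta(L,L')$ arising from the transversal component of $[L,\ol{L'}]$ --- an ingredient the paper's arrangement avoids entirely. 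Two small corrections: integrability (not partial integrability) is what gives $[\ol{L'},\ol{A}]\in\Gamma(T^{0,1}_M)$, though the weaker containment in $\Gamma(T^{1,0}_M\oplus T^{0,1}_M)$ already suffices for your purposes; and in identifying $d\eta(\ol{L'}_p,[L,\ol{A}]_p)$ with a Levi-form value you should note that the $T$-component of $[L,\ol{A}]_p$ contributes $\ol{\omega}_\eta(\ol{L'}_p)\lambda_\eta(L_p,A_p)$, which vanishes by Lemma \ref{lem:orthogonal}. Neither issue affects the validity of the argument.
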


The first part follows from a crucial property of D'Angelo 1-form found by Boas and Straube \cite{Boas-Straube}:

\begin{Proposition}[Boas and Straube \cite{Boas-Straube}]
\label{prop:boas-straube}
If $M$ is pseudoconvex, $d\alpha_{\eta}(X_p,Y_p) = 0$ holds for any $p \in M$ and $X_p, Y_p \in \Null_p \oplus \ol{\Null}_p$.
\end{Proposition}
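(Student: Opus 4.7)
The plan is to use Cartan's magic formula to rewrite $d\alpha_\eta$ as a Lie derivative, reduce the vanishing statement to three type-cases, and then invoke pseudoconvexity only at the single step where it is genuinely needed, via a Cauchy--Schwarz estimate on the Levi form. Since $\eta(T) = 1$, Cartan gives $\mathcal{L}_T \eta = d\iota_T \eta + \iota_T d\eta = \iota_T d\eta$, so $\alpha_\eta = -\iota_T d\eta$ and hence $d\alpha_\eta = -\mathcal{L}_T d\eta$. It therefore suffices to show $(\mathcal{L}_T d\eta)(X_p, Y_p) = 0$ for $X_p, Y_p \in \Null_p \oplus \ol{\Null}_p$, and by bilinearity this reduces to the three cases $(L, L')$, $(L, \ol{L'})$, and $(\ol{L}, \ol{L'})$ for smooth extensions $L, L' \in \Gamma(T^{1,0}_M)$ of given elements of $\Null_p$; the third case is the conjugate of the first.

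I would then expand
\[
(\mathcal{L}_T d\eta)(X, Y) = T(d\eta(X, Y)) - d\eta([T, X], Y) - d\eta(X, [T, Y])
\]
and simplify the bracket terms by writing $[T, L] = (A + \ol{B}) + \omega_\eta(L)\,T$ with $A, B \in \Gamma(T^{1,0}_M)$---the transversal coefficient being recovered as $\eta([T, L]) = \alpha_\eta(L) = \omega_\eta(L)$---and analogously for $[T, \ol{L'}]$. Three ingredients carry out the calculation: (i) integrability, which kills $\eta$ applied to any bracket of pure $(1,0)$- or pure $(0,1)$-type; (ii) Lemma \ref{lem:orthogonal}, which kills $\lambda_\eta(L_p, X_p)$ whenever $L_p \in \Null_p$; and (iii) the identities $d\eta(T, L) = -\omega_\eta(L)$ and $d\eta(T, \ol{L'}) = -\ol{\omega}_\eta(\ol{L'})$ that follow directly from the definition of $\alpha_\eta$. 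In the pure type case $(L, L')$ the first term vanishes identically because $d\eta(L, L') = -\eta([L, L']) \equiv 0$ by integrability, while the two bracket terms both reduce to $\pm\omega_\eta(L)\,\omega_\eta(L')$ at $p$ and cancel against each other; the $(\ol{L}, \ol{L'})$ case follows by conjugation.

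The mixed case $(L, \ol{L'})$ is where pseudoconvexity must genuinely enter, and I expect this to be the main obstacle. After the same algebraic cancellation one is left with the residual identity
\[
d\alpha_\eta(L, \ol{L'})_p = T_p(\lambda_\eta(L, L')),
\]
which cannot be handled by integrability alone, since $\lambda_\eta(L, L')$ is only zero at $p$, not identically. To finish, I would run the following Cauchy--Schwarz argument: by pseudoconvexity, the hermitian $2 \times 2$ Gram matrix of $\lambda_\eta$ on $\{L, L'\}$ is positive semi-definite on a neighborhood of $p$ and vanishes at $p$, so $|\lambda_\eta(L, L')_q|^2 \le \lambda_\eta(L, L)_q\,\lambda_\eta(L', L')_q$ nearby. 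Each diagonal entry is a smooth non-negative function attaining its minimum at $p$, so its differential vanishes there and it is $O(|q - p|^2)$; consequently $\lambda_\eta(L, L')$ itself is $O(|q - p|^2)$, its differential at $p$ vanishes, and in particular $T_p(\lambda_\eta(L, L')) = 0$. This Cauchy--Schwarz step is the unique place where positivity of the Levi form, rather than mere integrability, is actually used.
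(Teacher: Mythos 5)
Your argument is correct, but note that the paper itself gives no proof of this Proposition: it is stated with attribution to Boas--Straube and the reader is referred to \cite{Boas-Straube}, so your proposal supplies an argument the paper outsources. What you write is essentially the classical proof, and I verified the details: the identity $d\alpha_\eta = -\mathcal{L}_T(d\eta)$ via Cartan's formula is valid since $\iota_T\eta \equiv 1$; in the pure case $(L,L')$ the term $T(d\eta(L,L')) = -T(\eta([L,L']))$ vanishes identically, the $T^{0,1}$-components of $[T,L]$, $[T,L']$ are killed by Lemma \ref{lem:orthogonal} because $L_p, L'_p \in \Null_p$, and the two transversal contributions $\pm\omega_\eta(L)\omega_\eta(L')$ cancel; in the mixed case the same bookkeeping leaves exactly $d\alpha_\eta(L,\ol{L'})_p = T_p(\lambda_\eta(L,L'))$, and your Cauchy--Schwarz step is sound: pseudoconvexity makes the $2\times 2$ Gram matrix of $\lambda_\eta$ on $\{L,L'\}$ positive semi-definite near $p$, the diagonal entries are smooth non-negative functions with an interior minimum at $p$ and hence $O(|q-p|^2)$, so $\lambda_\eta(L,L')$ is $O(|q-p|^2)$ and every first derivative at $p$, in particular along $T$, vanishes. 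This correctly isolates pseudoconvexity as entering only through the first-order vanishing of the Levi form at its null directions, which is precisely the mechanism in the original Boas--Straube argument. A further point in your favor: your proof invokes the integrability condition (\ref{eq:integrability}) only to conclude that $\eta$ annihilates brackets of two $(1,0)$- or two $(0,1)$-fields and through Lemma \ref{lem:orthogonal}, so it goes through verbatim under the partial integrability condition (\ref{eq:partial integrability}), consistent with Remark \ref{rem:partially integrable}.
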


\begin{Remark}
Proposition \ref{prop:boas-straube} is also true for partially integrable pseudoconvex CR manifolds because the same proof as in \cite{Boas-Straube} works. 
\end{Remark}

\begin{proof}[Proof of Lemma \ref{lem:curvature}]
\noindent (1) Let $L_p, L'_p \in \Null_p$ and take their extensions $L, L' \in \Gamma(T^{1,0}_M)$.
Then, 
\begin{align*}
d\alpha_{\eta}(L_p, \ol{L'}_p) 
&= L_p \alpha_{\eta}(\ol{L'}) - \ol{L'}_p \alpha_{\eta}(L) - \alpha_{\eta}([L, \ol{L'}]_p)\\
&= L_p \ol{\omega}_{\eta}(\ol{L'}) - \ol{L'}_p \omega_{\eta}(L) - \omega_{\eta}([L, \ol{L'}]_p)
 - \ol{\omega}_{\eta}([L, \ol{L'}]_p)\\
&= \ol{\ol{L}_p {\omega_{\eta}}({L'}) + {\omega_{\eta}}([L', \ol{L}]_p)} - (\ol{L'}_p \omega_{\eta}(L) + \omega_{\eta}([L, \ol{L'}]_p))
\\
&= \ol{\opa_b \omega_{\eta}(L'_p, \ol{L}_p)} - \opa_b \omega_{\eta}(L_p, \ol{L'}_p).
\end{align*}
This is zero from Proposition \ref{prop:boas-straube}, hence, it holds that
\[
 \opa_b \omega_{\eta}(L_p, \ol{L'}_p) = \ol{\opa_b \omega_{\eta}(L'_p, \ol{L}_p)}.
\]

\noindent (2) Let $T$ and $T'$ be transversal vector fields normalized with respect to $\eta$.
Then $X := T-T' \in \Gamma(T^{1,0}_M \oplus T^{0,1}_M)$. 
We take $L_p \in \Null_p$ and its smooth extension $L \in \Gamma(T^{1,0}_M)$. Then
\[
 \opa_b \omega_{\eta}(L_p, \ol{L'}_p) = \ol{L'}_p \omega_{\eta}(L) + \omega_{\eta} ([L, \ol{L'}]_p)
 = \ol{L'}_p \eta([T, L]) + \eta([T, [L, \ol{L'}]]_p).
\]
Therefore, we would like to show the vanishing of 
\begin{align*}
& \left( \ol{L'}_p \eta([T, L]) + \eta([T, [L, \ol{L'}]]_p) \right) - \left( \ol{L'}_p \eta([T', L]) + \eta([T', [L, \ol{L'}]]_p) \right) \\
=& \ol{L'}_p \eta([X, L]) + \eta([X, [L, \ol{L'}]]_p)\\
=& d\eta(\ol{L'}_p, [X,L]_p) + [X,L]_p \eta(\ol{L'}_p) + \eta([\ol{L'}, [X,L]]_p)+ \eta([X, [L, \ol{L'}]]_p)\\
=&  \lambda_\eta([X,L]_p, L'_p) + \eta([\ol{L'}, [X,L]]_p)+ \eta([X, [L, \ol{L'}]]_p) \\
=&  \lambda_\eta([X,L]_p, L'_p)  -\eta([L, [\ol{L'}, X]]_p)\\
=&  \lambda_\eta([X,L]_p, L'_p)  -\lambda_\eta(L_p, [{L'}, \ol{X}]_p)
\end{align*}
where we used the Jacobi identity.
Since $L_p, L'_p \in \Null_p$, this is zero by Lemma \ref{lem:orthogonal} as desired. 
\end{proof}

Now let $\widetilde{\eta}$ and $\eta$ be trivializations of $\C \otimes TM/ (T^{1,0}_M \oplus T^{0,1}_M)$. Then since those are non-vanishing, there exists a positive (or negative) smooth function $\varphi \in C^{\infty}(M)$ such that 
\[
	\widetilde{\eta} = \varphi \eta.
\]
We see the relation between D'Angelo 1-forms of $\widetilde{\eta}$ and $\eta$ as follows.

\begin{Lemma} The following relations hold.
\label{lem:two D'Angelo 1-forms-relation}
	\begin{align*}
		\omega_{\varphi \eta} &= \omega_{\eta} + \partial_b \log |\varphi| \quad \text{ on } \Null, \\
		\ol{\partial}_b \omega_{\varphi \eta} &= \ol{\partial}_b \omega_{\eta} - \partial_b \ol{\partial}_b \log |\varphi| \quad \text{ on } \Null.
	\end{align*}
\end{Lemma}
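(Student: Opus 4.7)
The plan is to carry out all computations with a carefully matched pair of transversal vector fields. Given a transversal vector field $T$ normalized with respect to $\eta$, I would set $\widetilde{T} := T/\varphi$; then $\widetilde{\eta}(\widetilde{T}) = \varphi\,\eta(T/\varphi) = 1$, so $\widetilde{T}$ is automatically a transversal vector field normalized with respect to $\widetilde{\eta} = \varphi\eta$. Both displayed identities will follow from an explicit computation with this specific pair, and the intrinsic character on $\Null$ is then guaranteed by Lemmas~\ref{lem:connection} and~\ref{lem:curvature}(2).

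For the first identity, I would expand $\omega_{\widetilde{\eta}}(L) = \widetilde{\eta}([\widetilde{T}, L])$ for $L \in \Gamma(T^{1,0}_M)$ using the Leibniz rule $[\widetilde{T}, L] = (1/\varphi)[T, L] - L(1/\varphi)\,T$. Multiplying by $\widetilde{\eta} = \varphi\eta$ and using $\eta(T) = 1$ together with $\eta(T^{1,0}_M) = 0$ gives
\[
\omega_{\widetilde{\eta}}(L) \;=\; \eta([T, L]) + \frac{L\varphi}{\varphi} \;=\; \omega_{\eta}(L) + \partial_b \log|\varphi|(L),
\]
where $L\varphi/\varphi = L\log|\varphi|$ regardless of the sign of $\varphi$. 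Restricting to $\Null$ yields the first identity.

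For the second identity, I would apply $\opa_b$ to the first. Keeping $\widetilde{T} = T/\varphi$, the calculation above actually yields $\omega_{\widetilde{\eta}}(X) = \omega_{\eta}(X) + Xg$ for every $X \in T^{1,0}_M$, where $g := \log|\varphi|$. Substituting this into the defining formula for $\opa_b$ given just before Lemma~\ref{lem:curvature}, and using that both $\omega_{\widetilde{\eta}}$ and $\omega_{\eta}$ annihilate $T^{0,1}_M \oplus \mathbb{C}T$, one obtains
\[
\opa_b \omega_{\widetilde{\eta}}(L, \ol{L'}) - \opa_b \omega_{\eta}(L, \ol{L'}) \;=\; \ol{L'}(Lg) + [L, \ol{L'}]^{1,0} g \;=\; \opa_b(\partial_b g)(L, \ol{L'}).
\]
The main obstacle is then to recognize the right hand side as $-\partial_b \opa_b g(L, \ol{L'})$ on $\Null$, the CR analogue of the identity $\opa\partial = -\partial\opa$ on functions. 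I would derive this from $d^2 g = 0$: decomposing $dg = \partial_b g + \opa_b g + (Tg)\eta$ globally on $M$ (after extending the first two summands by zero along $\mathbb{C}T$) and evaluating $d(dg) = 0$ on $(L, \ol{L'})$ with $L_p, L'_p \in \Null_p$, the transverse piece $d((Tg)\eta)(L, \ol{L'}) = -(Tg)\,\lambda_{\eta}(L_p, L'_p)$ vanishes on $\Null$, and the remaining two $(1,1)$-pieces combine to give $\opa_b \partial_b g + \partial_b \opa_b g = 0$ on $\Null$ under the sign convention built into the paper's definition of $\opa_b$.
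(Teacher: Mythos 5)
Your proposal is correct and follows essentially the same route as the paper: both take the matched pair $\widetilde{T}=T/\varphi$, compute the bracket by the Leibniz rule to get $\omega_{\varphi\eta}=\omega_{\eta}+\partial_b\log|\varphi|$, and then apply $\opa_b$. The only difference is that the paper passes through $\alpha_{\varphi\eta}=\alpha_{\eta}+d\log|\varphi|$ and leaves the anticommutation $\opa_b\partial_b=-\partial_b\opa_b$ implicit, whereas you justify it on $\Null$ via $d^2(\log|\varphi|)=0$; that extra step is sound and fills in detail the paper omits.
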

\begin{proof}
	Let $T$ be a transversal vector field normalized with respect to $\eta$. Then $\varphi^{-1} T$ is a transversal vector field normalized with respect to $\varphi \eta$. Take $X_p \in T_pM$ and its smooth extension $X \in \Gamma(TM)$. Then 
	\begin{align*}
		\alpha_{\varphi \eta}(X_p) 
		&= \varphi \eta([\varphi^{-1}T, X]_p) = \eta([T,X]_p) - \varphi (X_p \varphi^{-1}) \\
		&= \eta([T,X]_p) + \frac{X_p \varphi}{\varphi} = \alpha_{\eta}(X_p) + X_p \log|\varphi|.
	\end{align*}
	Therefore, 
	\[
		\alpha_{\varphi \eta} = \alpha_{\eta} + d \log |\varphi|,
	\]
	and this implies 
	\begin{align*}
		\omega_{\varphi \eta} &= \omega_{\eta} + \partial_b \log |\varphi|, \\
		\ol{\partial}_b \omega_{\varphi \eta} &= \ol{\partial}_b \omega_{\eta} - \partial_b \ol{\partial}_b \log |\varphi|.  
	\end{align*}
\end{proof}


\section{\bf Defining function and D'Angelo 1-form}
When our CR manifold $M$ is realized in a complex manifold $\widetilde{M}$ as a boundary of 
a domain, we can describe the D'Angelo 1-form in terms of the defining 
function $\rho$ of the domain $\Omega$. 

Let $\Omega$ be a relatively compact domain in a complex manifold $\widetilde{M}$ with smooth boundary $M$. Let $\rho$ a smooth defining function of $\Omega$, that is, 
a smooth function $\rho \colon \widetilde{M} \to \R$ satisfies $\Omega = \{ \rho < 0\}$ 
and $d\rho \neq 0$ on $M$. The CR structure of $M$ is given by $\ker \pa \rho$ and  
we can trivialize $\C \otimes TM/T^{1,0}_M \oplus T^{0,1}_M$ by $\eta_\rho := (\pa \rho - \opa \rho)/2$. Hence, for a given transversal vector field $T$ normalized with respect to $\eta_\rho$, we can write down
\[
\alpha_{\eta_\rho}(X_p) = -d\eta_\rho(T_p, X_p) = \pa\opa \rho(T_p, X_p)
\]
for $X_p \in T_pM$. We denote $\alpha_{\eta_\rho}$ and $\omega_{\eta_\rho}$ by $\alpha_{\rho}$ and $\omega_{\rho}$, respectively.

Note that for a given positive trivialization $\eta$, we can find a defining function
$\rho$ such that $\eta = \eta_\rho$ as below, and express every $\alpha_\eta$ as 
$\alpha_\rho$ using a defining function $\rho$. 

\begin{Lemma}
	For a given positive trivialization $\eta$ of $\C \otimes TM/(T^{1,0}_M \oplus T^{0,1}_M)$, there exists a smooth defining function $\rho$ of $\Omega$ such that $\eta = (\pa\rho - \opa \rho)/2$.
\end{Lemma}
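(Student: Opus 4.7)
The plan is to take any smooth defining function $\rho_0$ of $\Omega$ and rescale it by a positive conformal factor chosen so that the resulting $\eta_{\rho}$ agrees with the prescribed $\eta$ along $M$.

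First, I would fix an arbitrary smooth defining function $\rho_0$ of $\Omega$ and observe that $\eta_{\rho_0} := (\partial\rho_0 - \overline{\partial}\rho_0)/2$, pulled back to $\C \otimes TM$, is a purely imaginary non-vanishing 1-form annihilating $T^{1,0}_M \oplus T^{0,1}_M$. Since $\Omega = \{\rho_0 < 0\}$ is pseudoconvex, the Levi form induced by $\eta_{\rho_0}$ coincides with the usual Levi form of $\rho_0$ and is positive semi-definite, so $\eta_{\rho_0}$ is a positive trivialization in the sense of \S\ref{sect:d'angelo}.

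Next, since the quotient bundle $\C \otimes TM / (T^{1,0}_M \oplus T^{0,1}_M)$ has complex rank one and both $\eta$ and $\eta_{\rho_0}$ are purely imaginary positive trivializations, there exists a unique smooth function $\varphi \in C^\infty(M)$ with $\eta = \varphi\, \eta_{\rho_0}$ on $M$; moreover $\varphi > 0$ because the two trivializations induce the same co-orientation of $M$ in $\widetilde{M}$. I would then extend $\varphi$ to a smooth strictly positive function $\widetilde{\varphi}$ on all of $\widetilde{M}$, for instance by transporting $\varphi$ into a tubular neighborhood of $M$ along the normal projection and gluing to the constant $1$ outside via a partition of unity, keeping everything positive.

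Finally, set $\rho := \widetilde{\varphi}\, \rho_0$. Since $\widetilde{\varphi} > 0$, one has $\{\rho < 0\} = \Omega$, and the product rule gives $d\rho|_M = \widetilde{\varphi}\, d\rho_0|_M \neq 0$ and likewise $\partial\rho|_M = \widetilde{\varphi}\, \partial\rho_0|_M$, $\overline{\partial}\rho|_M = \widetilde{\varphi}\, \overline{\partial}\rho_0|_M$, because $\rho_0 \equiv 0$ on $M$. Hence $(\partial\rho - \overline{\partial}\rho)/2 |_M = \varphi\, \eta_{\rho_0} = \eta$, as required. There is no substantive obstacle here: the only delicate point is the positivity of $\varphi$, which is precisely what the hypothesis that $\eta$ is a positive trivialization provides; the positive smooth extension of $\varphi$ from $M$ into $\widetilde{M}$ is a routine partition-of-unity argument.
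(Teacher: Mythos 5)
Your proof is correct, and it takes a genuinely different route from the paper's. You rescale an arbitrary defining function: writing $\eta = \varphi\,\eta_{\rho_0}$ on $M$, extending $\varphi$ to a positive function $\widetilde{\varphi}$ on $\widetilde{M}$, and setting $\rho = \widetilde{\varphi}\,\rho_0$, so that $\rho_0|_M \equiv 0$ kills the extra Leibniz term and gives $\eta_\rho = \varphi\,\eta_{\rho_0} = \eta$. The paper instead builds $\rho$ from scratch out of $\eta$: it takes the transversal field $T$ with $\eta(T)=1$, forms the normal field $X = -JiT/2$, and defines $\rho$ as the time coordinate of the flow of $X$, so that $X\rho = 1$, hence $\eta_\rho(T) = 1 = \eta(T)$ and the ratio between $\eta$ and $\eta_\rho$ is identically $1$. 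Your argument is the more elementary of the two (no flow box is needed) and makes the mechanism transparent: defining functions of $\Omega$ form a single positive ray under rescaling, and the induced trivializations sweep out exactly the positive multiples of $\eta_{\rho_0}$. The one thin spot is your justification that $\varphi > 0$: positivity of a trivialization is a condition on the Levi form, which at Levi-flat points does not distinguish $\eta$ from $-\eta$, so ``the two trivializations induce the same co-orientation'' is only automatic on components of $M$ carrying at least one strictly pseudoconvex point (where $\varphi>0$ at that point and hence, by non-vanishing, everywhere on the component). On a purely Levi-flat component, $-\eta_{\rho_0}$ is also a positive trivialization in the sense of the definition yet is not of the form $\eta_\rho$ for any defining function of $\Omega$; the lemma as literally stated therefore needs a sign convention there. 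This is not a defect specific to your write-up --- the paper's own proof glosses over exactly the same point when it asserts that $X$ is an \emph{outward} normal vector field --- but it is worth flagging explicitly rather than attributing it to the positivity hypothesis.
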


\begin{proof}
Take a transversal vector field $T$ normalized with respect to $\eta$. 
Then, $iT$ is a real vector field along $M$, hence, $X := - J iT/2$ is an outward normal vector field of $M$ in $\widetilde{M}$, where $J:T\widetilde{M} \rightarrow T\widetilde{M}$ is the complex structure of $\widetilde{M}$. We extend $X$ as a non-vanishing smooth vector field on an open neighborhood of $M$ in $\widetilde{M}$, and consider the flow generated by $X$, $\Phi \colon M \times (-\delta, \delta) \to \widetilde{M}$. 
Since $X$ is non-vanishing along $M$, this map gives a diffeomorphism between $M \times (-\delta, \delta)$ and an open neighborhood $W$ of $M$ for enough small $\delta > 0$. It is clear that $\rho := \mathrm{pr}_2 \circ \Phi^{-1} \colon W \to (-\delta, \delta)$ is a defining function of $M$.

Now we claim that $\eta = \eta_\rho$ holds. Since both $\eta$ and $\eta_\rho$ are positive trivializations, there exists $\varphi \in C^{\infty}(M)$ such that $\eta = \varphi \eta_\rho$.
From the construction of $\rho$, we have $X\rho =1$ on $M$, hence, 
\[
	\eta_\rho(T) = \frac{i}{2}d^c\rho (T)
	= d\rho \left( -\frac{JiT}{2} \right) = d\rho(X) = 1,
\]
where $d^c = i(\ol{\partial}-\partial)$. Here, we used $d^c = -J^* d$ when it acts on smooth functions.  Therefore, $\varphi = \eta(T)/\eta_\rho(T) = 1$ and we conclude that $\eta = \eta_\rho$.
\end{proof}

Now assume that our $M$ is pseudoconvex and $p \in M$ is a weakly pseudoconvex point, i.e., $\Null_p \neq 0$.
We shall describe the hermitian forms $\opa_b \omega_{\rho}$ and $\omega_{\rho} \wedge \ol{\omega}_{\rho}$ on $\Null_p$ in terms of the given definition function $\rho$ by choosing a suitable local coordinate and a transversal vector field $T$. 

Take a local holomorphic coordinate $(U, z = (z', z_n) = (z_1, z_2, \dots, z_n))$, 
$z_j = x_j + i y_j$, of $\widetilde{M}$ such that $z(p) = 0$ and $(d\rho)_p = (dy_n)_0$.
Note that $T_p M \simeq \C^{n-1} \times \R$ and $T^{1,0}_{M,p} \simeq \C^{n-1} \times \{0\}$ in this local coordinate.  
The vector field
\[
T = \left(\frac{\pa \rho}{\pa z_n}\right)^{-1} \base{z_n} - \left(\frac{\pa \rho}{\pa \ol{z}_n}\right)^{-1} \base{\ol{z}_n}
\]
is well-defined on $U$ if we choose $U$ enough small. Note that 
$T$ is a transversal vector field normalized with respect to $\eta_\rho$ on $M \cap U$. 
It follows that 
\[
\widetilde{\alpha}_{\rho} := \iota_T \pa\opa \rho = \sum_{j=1}^n \frac{\pa^2 \rho}{\pa z_j \pa \ol{z}_n} \left(\frac{\pa \rho}{\pa \ol{z}_n}\right)^{-1} dz^j + \sum_{k=1}^n \frac{\pa^2 \rho}{\pa z_n \pa \ol{z}_k} \left(\frac{\pa \rho}{\pa {z_n}}\right)^{-1} d\ol{z}^k
\]
and its $(1,0)$-component
\[
\widetilde{\omega}_{\rho} = \sum_{j=1}^n \frac{\pa^2 \rho}{\pa z_j \pa \ol{z}_n} \left(\frac{\pa \rho}{\pa \ol{z}_n}\right)^{-1} dz^j  = \pa \log \left(\frac{\pa \rho}{\pa \ol{z}_n}\right)
\]
satisfy $\widetilde{\alpha}_{\rho}|TM = \alpha_{\rho}$ and $\widetilde{\omega}_{\rho}|T^{1,0}_M  = \omega_{\rho}$. 
Therefore,
\[
\opa \widetilde{\omega}_{\eta} = \opa \pa \log \left(\frac{\pa \rho}{\pa \ol{z}_n}\right)
\]
and $\opa_b \omega_{\eta} = \opa \widetilde{\omega}_{\eta} |T^{1,0}_M \times T^{0,1}_M$ from the definition of the tangential Cauchy--Riemann operator. 
We have a local description for the hermitian forms on $\Null_p$, 
\[
\opa_b \omega_{\rho} =  \opa \pa \log \left(\frac{\pa \rho}{\pa \ol{z}_n}\right), 
\quad 
\omega_{\rho} \wedge \ol{\omega}_{\rho} =  \pa \log \left(\frac{\pa \rho}{\pa \ol{z}_n}\right)
\wedge  \opa \log \left(\frac{\pa \rho}{\pa {z_n}}\right).
\]
Since Proposition \ref{prop:boas-straube} yields
\[
d\widetilde{\alpha}_{\rho} = \opa \pa  \log \left(\frac{\pa \rho}{\pa \ol{z}_n}\right)
+ \pa\opa \log \left(\frac{\pa \rho}{\pa {z_n}}\right)=0 \quad \text{on $\Null_p$}, 
\]
we see that our $\opa_b \omega_{\rho}$ actually gives a hermitian form on $\Null_p$: 
\[
\opa_b \omega_{\rho} 
=  \opa \pa \log \left(\frac{\pa \rho}{\pa {z_n}}\right)
= \frac{1}{2}  \opa \pa \log \left|\frac{\pa \rho}{\pa {z_n}}\right|^2. 
\]

Using this local description, we see a geometric interpretation of $\opa_b \omega_\rho$: 
the strict positivity of $\opa_b \omega_\rho$ corresponds to certain strong plurisubharmonicity of $-\log (-\rho)$ on $\Omega$. 

\begin{Proposition}
	\label{prop:brunella}
	Suppose that the defining function $\rho$ enjoys the strong Oka condition near $M$, namely,
	\[
	i\pa\opa (-\log (-\rho)) \geq g \quad \text{in $\Omega \cap W$}
	\]
	for some hermitian metric $g$ of $\widetilde{M}$ and an open neighborhood $W \supset M$.
	Then,  
	\[
	\opa_b \omega_{\rho} \geq g > 0 \quad\text{on $\Null$}.
	\]
\end{Proposition}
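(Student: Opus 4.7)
The plan is to transfer the global estimate $i\pa\opa(-\log(-\rho)) \geq g$ holding on $\O \cap W$ into the pointwise bound $\opa_b\omega_\rho \geq g$ on $\Null$, by applying the strong Oka condition to a $(1,0)$ vector field extending a given null direction and then passing to a nontangential boundary limit.

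Concretely, fix $p \in M$ and a nonzero $L_p \in \Null_p$. Work in the local holomorphic coordinates $(U,z)$ of \S3 centered at $p$, with $(d\rho)_p = (dy_n)_p$ and (after a linear change in $z_1,\dots,z_{n-1}$) $L_p = \pa/\pa z_1|_p$; the null condition then gives $\pa^2\rho/\pa z_1\pa\ol{z}_k(p) = 0$ for $k = 1,\dots,n-1$. Extend $L_p$ to a $(1,0)$-vector field on $U$ by
\[
L := \frac{\pa}{\pa z_1} + a(z)\frac{\pa}{\pa z_n},
\]
where $a \in C^\infty(U,\C)$ satisfies $a(p) = 0$ but has a normal derivative $\alpha := -(\pa a/\pa y_n)(p)$ that remains to be fixed.

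Evaluating the strong Oka condition at $L_q$ for $q \in \O \cap W$ and using the identity
\[
(i\pa\opa(-\log(-\rho)))(L,\ol L) = -\frac{(i\pa\opa\rho)(L,\ol L)}{\rho} + \frac{|L\rho|^2}{\rho^2}
\]
gives an inequality on $\O \cap W$. I would push $q \to p$ along a curve $\gamma$ with $\dot\gamma(0) = -\pa/\pa y_n|_p$, so that $\rho\circ\gamma(t) = -t + O(t^2)$. Both $(i\pa\opa\rho)(L,\ol L)$ and $L\rho$ vanish at $p$---the first by the null condition together with $a(p) = 0$, the second since $\pa\rho/\pa z_1(p) = 0 = a(p)$---so a first-order Taylor expansion produces finite limits for both ratios and yields a single inequality at $p$.

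The main obstacle is to choose $\alpha$ so that the resulting limit equals $\opa_b\omega_\rho(L_p,\ol L_p) = \tfrac12\opa\pa\log|\pa\rho/\pa z_n|^2(L_p,\ol L_p)$. A direct expansion in the coordinates shows that with the naive choice $\alpha = 0$ (i.e., $L = \pa/\pa z_1$) the limit overshoots $\opa_b\omega_\rho(L_p,\ol L_p)$ by the nonnegative defect $\bigl|\pa^2\rho/\pa z_1\pa z_n(p) + \pa^2\rho/\pa z_1\pa\ol{z}_n(p)\bigr|^2$. Setting $\alpha := -2\bigl(\pa^2\rho/\pa z_1\pa z_n(p) + \pa^2\rho/\pa z_1\pa\ol{z}_n(p)\bigr)$---realized, for instance, by $a(z) := 2\bigl(\pa^2\rho/\pa z_1\pa z_n(p) + \pa^2\rho/\pa z_1\pa\ol{z}_n(p)\bigr)y_n$---absorbs this defect into the squared term $|(\pa(L\rho)/\pa y_n)(p)|^2$, leaving exactly $\opa_b\omega_\rho(L_p,\ol L_p)$. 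Since $g(L_q,\ol L_q) \to g(L_p,\ol L_p) > 0$ by continuity, the strong Oka inequality passes to $\opa_b\omega_\rho(L_p,\ol L_p) \geq g(L_p,\ol L_p) > 0$; as $p \in M$ and $L_p \in \Null_p$ were arbitrary, this yields $\opa_b\omega_\rho \geq g > 0$ on $\Null$.
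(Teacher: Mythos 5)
Your argument is correct and follows essentially the same route as the paper's: extend the null vector $L_p$ to a $(1,0)$-field $\pa/\pa z_1 + a\,\pa/\pa z_n$ with $a(p)=0$, restrict the strong Oka inequality $\frac{1}{-\rho}\pa\opa\rho(L,\ol L)+\frac{1}{\rho^2}|\pa\rho(L)|^2 \geq g(L,\ol L)$ to it, and let $q \to p$ along the inward normal, where both $\pa\opa\rho(L,\ol L)$ and $\pa\rho(L)$ vanish to the right order so that the limit exists and equals $\opa_b\omega_\rho(L_p,\ol L_p)$. The only difference is bookkeeping: the paper removes the cross terms by normalizing the coordinates so that $\pa^2\rho/\pa z_j\pa z_k(p)=0$ for all $j,k$ and then uses the fixed extension $a=(\pa\rho/\pa z_1)(\pa\rho/\pa z_n)^{-1}$, whereas you keep general coordinates and instead tune $(\pa a/\pa y_n)(p)$ to absorb the defect $\bigl|\pa^2\rho/\pa z_1\pa z_n(p)+\pa^2\rho/\pa z_1\pa\ol{z}_n(p)\bigr|^2$ --- a choice which, in the paper's normalized coordinates, agrees to first order along the normal with the paper's extension, so the two computations coincide.
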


\begin{proof}
	Let $p \in M$ and $L_p \in \Null_p$.
	Take a local holomorphic coordinate $(U, z = (z', z_n) = (z_1, z_2, \dots, z_n))$, 
	$z_j = x_j + i y_j$, of $\widetilde{M}$ such that $z(p) = 0$ and $(d\rho)_p = (dy_n)_0$.
	By a change of the local coordinate, we may assume $L_p = (\base{z_1})_0$ and 
	\[
	\frac{\pa^2 \rho}{\pa z_j \pa z_k}(0) = \frac{\pa^2 \rho}{\pa \ol{z}_j \pa \ol{z}_k}(0)= 0 
	\]
	for any $j, k = 1, 2, \dots, n$ without loss of generality.
	
	From the local expression of $\opa_b \omega_{\rho}$, we have
	\[
	\opa_b \omega_{\rho}(L_p, \ol{L}_p)
	= 
	\frac{1}{2}\frac{\pa^2}{\pa z_1 \pa \ol{z}_1}( - \log \left|\frac{\pa \rho}{\pa \ol{z}_n}\right|^2)(0)
	= -\frac{\pa^3 \rho}{\pa z_1 \pa \ol{z}_1 \pa y_n}(0)
	\]
	thanks to the choice of our local coordinate.
	
	We would like to show the lower bound of this derivative from the strong Oka condition. 
	We extend $L_p$ to 
	\[
	L = \base{z_1} + \frac{\pa \rho}{\pa z_1} \left(\frac{\pa \rho}{\pa z_n}\right)^{-1} \base{z_n}
	\]
	on $U$. 
	From the assumption,
	\[
	\pa\opa (-\log (-\rho))(L_q, \ol{L}_q) \geq g(L_q, \ol{L}_q) 
	\]
	holds for any $q \in U \cap \Omega$ if we shrink $U$ so that $U \subset W$. This LHS is 
	\begin{align*}
	&\pa\opa (-\log (-\rho))(L_q, \ol{L}_q)\\
	&= \frac{1}{(-\rho)}\pa\opa \rho(L_q, \ol{L}_q) +\frac{1}{\rho^2}  \left|\pa\rho(L_q)\right|^2\\
	&= \frac{1}{(-\rho)}\left( \frac{\pa^2\rho}{\pa z_1 \pa \ol{z}_1} 
	+ 2\Re \frac{\pa^2\rho}{\pa \ol{z}_1 \pa {z_n}}  \frac{\pa \rho}{\pa z_1}   \left(\frac{\pa \rho}{\pa z_n}\right)^{-1} + \frac{\pa^2\rho}{\pa{z_n} \pa \ol{z}_n} 
	\left|\frac{\pa \rho}{\pa z_1} \right|^2  \left|\frac{\pa \rho}{\pa z_n}\right|^{-2} \right)\\
	&\quad  +\frac{4}{\rho^2}  \left|\frac{\pa \rho}{\pa z_1} \right|^2.
	\end{align*}
	Consider its value at $z = (0, \dots, 0, iy_n)$. Then, taking its limit as $y_n \nearrow 0$ yields
	\begin{align*}
	&\lim_{y_n \nearrow 0}\pa\opa (-\log (-\rho))(L_{(0, \dots, 0, iy_n)}, \ol{L}_{(0, \dots, 0, iy_n)})\\
	&=  -\frac{\pa^3\rho}{\pa z_1 \pa \ol{z}_1 \pa y_n}(0) 
	- 4\Re i \frac{\pa^2\rho}{\pa \ol{z}_1 \pa {z_n}}(0)  \frac{\pa^2 \rho}{\pa z_1 \pa y_n}(0) + 4 \left|\frac{\pa^2 \rho}{\pa z_1 \pa y_n}(0) \right|^2 \\
	&=  -\frac{\pa^3\rho}{\pa z_1 \pa \ol{z}_1 \pa y_n}(0).
	\end{align*}
	Here  we used our choice of the local coordinate
	\[
	0 = \frac{\pa^2 \rho}{\pa \ol{z}_1 \pa \ol{z}_n}(0) = \frac{1}{2} 
	\left( \frac{\pa^2 \rho}{\pa \ol{z}_1 \pa x_n}(0) + i \frac{\pa^2 \rho}{\pa \ol{z}_1 \pa y_n}(0)\right),
	\]
	hence,
	\[
	\frac{\pa^2 \rho}{\pa \ol{z}_1 \pa {z_n}}(0) = \frac{1}{2} 
	\left( \frac{\pa^2 \rho}{\pa \ol{z}_1 \pa x_n}(0) - i \frac{\pa^2 \rho}{\pa \ol{z}_1 \pa y_n}(0)\right)
	= - i \frac{\pa^2 \rho}{\pa \ol{z}_1 \pa y_n}(0).
	\]
	Therefore, we conclude
	\[
	\opa_b \omega_{\rho}(L_p, \ol{L}_p)=-\frac{\pa^3 \rho}{\pa z_1 \pa \ol{z}_1 \pa y_n}(0) \geq g(L_p, \ol{L}_p) > 0.
	\]
\end{proof}


\section{\bf Diederich--Forn{\ae}ss index and Steinness index}

In \cite{Yum2}, the second author completely characterized the Diederich--Forn{\ae}ss index and Steinness index of $\O$ in terms of D'Angelo 1-form when $\O$ is a smoothly bounded pseudoconvex domain in $\CC^n$. In this section, we would like to generalize this result for a smoothly bounded pseudoconvex domain in a complex manifold. However, it turns out that this generalization does not hold anymore and need some conditions on the boundary of the domain (see Example \ref{ex:counter-example}). For this, we introduce 4 kinds of the Diederich--Forn{\ae}ss indices and the Steinness indices, respectively. 

\begin{Definition} \label{def:DF,S indices}	
For  a relatively compact domain $\O$ in a complex manifold $\widetilde{M}$ with $C^\infty$-smooth boundary $M$,
we define the \emph{Diederich--Forn{\ae}ss index of $\Omega$ in the strong and weak sense} by 
	\begin{align*}
		DF_s(\O) &:= \sup_{\rho} \left\{ 0 < \gamma < 1 : -(-\rho)^{\gamma} \text{ is strictly plurisubharmonic on } \O \cap W \right\}, \\
		DF_w(\O) &:= \sup_{\rho} \left\{ 0 < \gamma < 1 : -(-\rho)^{\gamma} \text{ is plurisubharmonic on } \O \cap W \right\},
	\end{align*}
respectively. The \emph{Steinness index of $\Omega$ in the strong and weak sense} are defined by 
	\begin{align*}
		S_s(\O) &:= \inf_{\rho} \left\{ \gamma > 1 : \rho^{\gamma} \text{ is strictly plurisubharmonic on } \ol{\O}^{\complement} \cap W \right\}, \\
		S_w(\O) &:= \inf_{\rho} \left\{ \gamma > 1 : \rho^{\gamma} \text{ is plurisubharmonic on } \ol{\O}^{\complement} \cap W \right\},
	\end{align*}
respectively. Here $\rho$ runs all the $C^\infty$-smooth defining functions of $\O$, 
and $W$ is some open neighborhood of $M$ in $\widetilde{M}$ that can depend on $\gamma$. 

For a compact pseudoconvex CR manifold $M$ of hypersurface type, we define
the \emph{Diederich--Forn{\ae}ss index of $M$ in the strong and weak sense} by 
\begin{align*}
		DF_s(M) &:= \sup_{\eta} \left\{ 0 < \gamma < 1 : \ol{\partial}_b \omega_{\eta} - \frac{\gamma}{1-\gamma} \omega_{\eta} \wedge \ol{\omega}_{\eta} > 0 \text{ on } \Null \right\}, \\
		DF_w(M) &:= \sup_{\eta} \left\{ 0 < \gamma < 1 : \ol{\partial}_b \omega_{\eta} - \frac{\gamma}{1-\gamma} \omega_{\eta} \wedge \ol{\omega}_{\eta} \ge 0 \text{ on } \Null \right\},
\end{align*}
respectively. The \emph{Steinness index of $M$ in the strong and weak sense} are defined by 
\begin{align*}	
		S_s(M) &:= \inf_{\eta} \left\{ \gamma > 1 : -\ol{\partial}_b \omega_{\eta} - \frac{\gamma}{\gamma-1} \omega_{\eta} \wedge \ol{\omega}_{\eta} > 0 \text{ on } \Null \right\}, \\
		S_w(M) &:= \inf_{\eta} \left\{ \gamma > 1 : -\ol{\partial}_b \omega_{\eta} - \frac{\gamma}{\gamma-1} \omega_{\eta} \wedge \ol{\omega}_{\eta} \ge 0 \text{ on } \Null \right\}
	\end{align*}
where $\eta$ runs all the positive trivialization of $M$. 

If the supremum or infimum does not exist, then we define the corresponding Diederich--Forn{\ae}ss index to be 0 or the Steinness index to be $\infty$, respectively. 
\end{Definition}

\begin{Remark}
	In Definition \ref{def:DF,S indices}, we regard $\ol{\partial}_b \omega_{\eta}$ and $\omega_{\eta} \wedge \ol{\omega}_{\eta}$ as quadratic forms on $\Null$, i.e., $\ol{\partial}_b \omega_{\eta} > 0$ on $\Null$ means $\ol{\partial}_b \omega_{\eta}(L, \ol{L}) > 0$ for all $L \in \Null$.
\end{Remark}

\begin{Remark}
In the literature, we often require $-(-\rho)^\eta$  to be strict plurisubharmonic on entire domain $\Omega$ to define $DF_s(\Omega)$. 
This definition is inconvenient in the context of this paper,
since $DF_s(\Omega) = 0$ once $\Omega$ contains a compact complex analytic set of 
positive dimension, and the boundary can not control the existence of such sets; 
you may blow up any point of $\Omega$.  
In this paper, we require the strict plurisubharmonicity only near the boundary.
\end{Remark}

\begin{Remark}
From the definition, it is clear that $DF_w(M), DF_s(M), S_w(M)$ and $S_s(M)$
are CR invariant of compact pseudoconvex CR manifolds of hypersurface type.
Using this observation, it was confirmed in \cite{Yum2} that for smoothly bounded domains in $\C^n$,
$DF_w(\Omega),  DF_s(\Omega), S_w(\Omega)$ and $S_s(\Omega)$
are CR invariant of the boundary $\partial\O$. 
It is an open question whether $DF_w(\Omega),  DF_s(\Omega), S_w(\Omega)$ and 
$S_s(\Omega)$ are biholomorphism invariant of domains. 
\end{Remark}

\begin{Remark}
There is another possibility to extend the notion of these indices by relaxing boundary regularity. 
Chen \cite{Chen} introduced a Diederich--Forn{\ae}ss type index, called \emph{hyperconvexity index}, for hyperconvex domains in $\C^n$.
Harrington \cite{harrington} introduced another weaker version of the Diederich--Forn{\ae}ss index,
called the \emph{weak Diederich--Forn{\ae}ss index}, for bounded pseudoconvex domains in $\C^n$.
\end{Remark}

Our first theorem in this paper clarifies the relation that holds in general among these indices. 

\begin{Theorem} 
\label{thm:4 kinds of DF,S-relation}
	Let $\O$ be a relatively compact domain in a complex manifold $\widetilde{M}$ with smooth pseudoconvex boundary $M$. Then the following inequalities hold. 
	\begin{align*}
		0 \le DF_s(M) \le DF_s(\O)& \le DF_w(\O) \le DF_w(M) \le 1, \\
		1 \le S_w(M) \le S_w(\O) &\le S_s(\O) \le S_s(M) \le \infty.	
	\end{align*}
\end{Theorem}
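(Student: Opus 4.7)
The plan is to extract a single boundary Taylor expansion that links the complex Hessian of $-(-\rho)^{\gamma}$ (resp.\ $\rho^{\gamma}$) on one side of $M$ to the D'Angelo $(1,1)$-form on $\Null$, and then to read it in both directions. The trivial halves $DF_{s} \le DF_{w}$ and $S_{w} \le S_{s}$ on both $\O$ and $M$ are immediate from the definitions, and the endpoint bounds $0$, $1$, $\infty$ are built into the definitions. Only the four cross-inequalities $DF_{s}(M) \le DF_{s}(\O)$, $DF_{w}(\O) \le DF_{w}(M)$, $S_{w}(M) \le S_{w}(\O)$, $S_{s}(\O) \le S_{s}(M)$ require actual work.

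The starting point is the Hessian identity
\[
i \pa \opa \bigl( -(-\rho)^{\gamma} \bigr) = \gamma (-\rho)^{\gamma-1} \left[ i \pa \opa \rho + \frac{1-\gamma}{-\rho}\, i \pa \rho \wedge \opa \rho \right] \quad \text{on } \O,
\]
and its counterpart for $\rho^{\gamma}$ on $\ol{\O}^{\complement}$; thus (strict) plurisubharmonicity of the exhaustion near $M$ reduces to (strict) positivity of the bracketed hermitian form on $T^{1,0}_{\widetilde{M}}$ in a one-sided neighborhood of $M$. Using the Section 3 coordinates at a weakly pseudoconvex point $p \in M$ with $z(p) = 0$, $(d\rho)_{p} = (dy_{n})_{0}$, and $L_{p} = (\pa/\pa z_{1})_{0} \in \Null_{p}$, set
\[
L := \base{z_{1}} - \frac{\pa\rho / \pa z_{1}}{\pa\rho / \pa z_{n}}\base{z_{n}}, \quad N := \left( \frac{\pa\rho}{\pa z_{n}} \right)^{-1} \base{z_{n}},
\]
so that $\pa\rho(L) = 0$ and $\pa\rho(N) = 1$. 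For $X = L + cN$ with $c \in \C$, the bracketed form on $(X, \ol{X})$ is a quadratic in $c$ whose $|c|^{2}$-coefficient is $L_{\rho}(N, \ol{N}) + (1-\gamma)/(-\rho)$ and explodes like $(1-\gamma)/(-\rho)$ near $M$; completing the square in $c$ yields the lower bound
\[
L_{\rho}(L, \ol{L}) - \frac{|L_{\rho}(L, \ol{N})|^{2}}{L_{\rho}(N, \ol{N}) + (1-\gamma)/(-\rho)}, \quad L_{\rho} := \pa \opa \rho.
\]
Taylor expanding at $p$ along the inward normal $z = (0, \ldots, 0, iy_{n})$ with $y_{n} \nearrow 0^{-}$, and using the Section 3 identifications $\omega_{\rho} = \pa \log(\pa\rho / \pa \ol{z}_{n})$ and $\opa_{b}\omega_{\rho} = \tfrac{1}{2} \opa \pa \log |\pa\rho / \pa z_{n}|^{2}$ on $\Null$, together with the normalizing choice $\frac{\pa^{2} \rho}{\pa z_{j} \pa z_{k}}(0) = 0$ as in Proposition \ref{prop:brunella}, one finds the leading expansions
\[
L_{\rho}(L, \ol{L}) = (-\rho) \bigl[ \opa_{b}\omega_{\rho}(L_{p}, \ol{L}_{p}) + |\omega_{\rho}(L_{p})|^{2} \bigr] + O(\rho^{2}), \quad L_{\rho}(L, \ol{N}) \to \omega_{\rho}(L_{p}),
\]
whence
\[
\lim_{y_{n} \nearrow 0^{-}} \frac{1}{-\rho}\left[ L_{\rho}(L, \ol{L}) - \frac{|L_{\rho}(L, \ol{N})|^{2}}{L_{\rho}(N, \ol{N}) + (1-\gamma)/(-\rho)} \right] = \opa_{b}\omega_{\rho}(L_{p}, \ol{L}_{p}) - \frac{\gamma}{1-\gamma}|\omega_{\rho}(L_{p})|^{2}.
\]
The coefficient $\gamma / (1-\gamma) = 1/(1-\gamma) - 1$ emerges precisely from cancelling the $|\omega_{\rho}|^{2}$ correction in the leading $L_{\rho}(L, \ol{L})$ expansion against the $1/(1-\gamma)$ produced by the $c$-minimization; the Steinness analogue goes through verbatim on $\ol{\O}^{\complement}$ with $\rho > 0$ and yields the form $-\opa_{b}\omega_{\rho} - (\gamma/(\gamma-1))|\omega_{\rho}|^{2}$.

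This single limit identity delivers all four cross-inequalities. For $DF_{w}(\O) \le DF_{w}(M)$ and $S_{s}(\O) \le S_{s}(M)$, take any $\rho$ witnessing the (weak or strict) one-sided plurisubharmonicity and read off the corresponding D'Angelo inequality with $\eta := \eta_{\rho}$. Conversely, for $DF_{s}(M) \le DF_{s}(\O)$ and $S_{w}(M) \le S_{w}(\O)$, realize the given positive trivialization as $\eta = \eta_{\rho}$ via the first lemma of Section 3 and run the estimate in reverse: strict Levi positivity off $\Null$, the strict D'Angelo inequality on $\Null$, and the explosion of $(1-\gamma)/(-\rho)$ in the transverse direction together deliver, by compactness of $M$, a uniform strict positive lower bound of the bracketed form in a full one-sided neighborhood of $M$. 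The main obstacle is the careful boundary Taylor expansion of $L_{\rho}(L, \ol{L})$ through the $|\omega_{\rho}|^{2}$ correction that, when merged with the $c$-optimization, produces the exact coefficient $\gamma/(1-\gamma)$; any miscount of these cross terms would give an off-by-one like $1/(1-\gamma)$ and miss the D'Angelo form entirely.
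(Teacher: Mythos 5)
Your analytic core is essentially the paper's: completing the square in $c$ for $X=L+cN$ is the Schur complement of the $2\times 2$ Levi matrix in the $(L,N)$-frame, and the boundary limit of that Schur complement is exactly the quantity $\opa_b\omega_\rho-\frac{\gamma}{1-\gamma}\,\omega_\rho\wedge\ol{\omega}_\rho$ that the paper extracts via $\lim\frac{4}{1-\gamma}\rho^2\det H$ in Lemmas \ref{lem:df-necessity} and \ref{lem:stein-necessity}, and via the asymptotic block matrix in Lemmas \ref{lem:df-from-boundary} and \ref{lem:stein-from-boundary}. However, there is a genuine logical gap in how you distribute the four cross-inequalities between the two directions of the argument, and it invalidates both Steinness inequalities. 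Because $S_w$ and $S_s$ are \emph{infima}, the inequality $S_s(\O)\le S_s(M)$ requires showing that every exponent $\gamma$ admissible for $M$ in the strong sense is also admissible for $\O$; that is a \emph{construction} statement (from a trivialization $\eta$ satisfying the strict boundary inequality, build $\rho$ with $\rho^\gamma$ strictly plurisubharmonic outside), while $S_w(M)\le S_w(\O)$ is the \emph{read-off} statement. You have assigned them the other way around. As written, your read-off argument applied to a strictly plurisubharmonic $\rho^\gamma$ only yields $S_w(M)\le S_s(\O)$, since the boundary limit of a strict inequality is merely non-strict and you land in the weak $M$-index; and your reverse construction, started from the \emph{weak} boundary inequality, cannot produce plurisubharmonicity in a one-sided neighborhood, because the compactness argument you invoke needs strict positivity on $\Null$ to absorb the lower-order error terms. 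So neither $S_w(M)\le S_w(\O)$ nor $S_s(\O)\le S_s(M)$ is actually established by the proposal; the correct pairing (as in the paper) is read-off for $S_w(M)\le S_w(\O)$ and construction for $S_s(\O)\le S_s(M)$.

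A smaller but real issue concerns the construction direction itself: you only test the Hessian on the two-dimensional slice spanned by $L$ and $N$. To obtain strict positivity of the full complex Hessian near a weakly pseudoconvex point one must also control the mixed terms between the $\Null_p$-directions and the directions on which the Levi form is strictly positive (the blocks $A$ and $B$ in the paper's matrix $\widetilde{H}$); these are of size $O(1)$ and $O(1/|y_n|)$ respectively and must be dominated by the $1/|y_n|$ and $1/|y_n|^2$ growth of the corresponding diagonal entries. The phrase \enquote{strict Levi positivity off $\Null$ plus compactness} does not by itself dispose of these cross terms, especially since $\dim_\C\Null_p$ can jump as $p$ varies, so the frame adapted to $\Null_p$ and the resulting estimates must be organized pointwise and then patched by a smooth family of coordinates, as the paper does.
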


The theorem will follow from the following sequence of 4 lemmas.
First, we shall show the inequalities for the indices in the weak sense
by some limiting process to the boundary. 
We begin with showing that $DF_w(\O) \le DF_w(M)$. 

\begin{Lemma}
	\label{lem:df-necessity}
	Suppose that the defining function $\rho$ enjoys $i\pa\opa (-(-\rho)^\gamma) \geq 0$ on $\O \cap W$ for some $\gamma \in (0,1)$ and open neighborhood $W \supset M$.
	Then,  
	\[
	\opa_b \omega_{\rho} - \frac{\gamma}{1-\gamma}\omega_{\rho} \wedge \ol{\omega}_{\rho} \geq 0 \quad \text{on $\Null$}.
	\]
\end{Lemma}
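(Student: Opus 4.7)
The plan is to convert the hypothesis into a pointwise hermitian-form inequality on $(1,0)$-vectors at interior points, and then pass to the boundary along a carefully-scaled one-parameter family of test vectors, optimizing the parameter to extract the sharp constant.

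First, a direct Leibniz/chain-rule computation yields
\[
i\pa\opa\bigl(-(-\rho)^\gamma\bigr)=\gamma(-\rho)^{\gamma-1}\left[i\pa\opa\rho+(1-\gamma)\,\frac{i\pa\rho\wedge\opa\rho}{-\rho}\right].
\]
Since $\gamma(-\rho)^{\gamma-1}>0$ on $\O$, the hypothesis is equivalent to the pointwise statement that for every $q\in\O\cap W$ and every $(1,0)$-vector $X\in T^{1,0}_q\widetilde{M}$,
\[
\pa\opa\rho(X,\ol{X})+\frac{1-\gamma}{-\rho(q)}\bigl|\pa\rho(X)\bigr|^2\ \ge\ 0. \qquad(\ast)
\]

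Next I fix $p\in M$, $L_p\in\Null_p$, and adopt the local coordinates introduced just before Proposition \ref{prop:brunella}: $z(p)=0$, $(d\rho)_p=(dy_n)_0$, $L_p=(\base{z_1})_0$, and $\rho_{jk}(0)=\rho_{\ol{j}\ol{k}}(0)=0$ for all $j,k$. Writing $a:=\frac{\pa^2\rho}{\pa z_1\pa y_n}(0)$ and $b:=\frac{\pa^3\rho}{\pa z_1\pa\ol{z}_1\pa y_n}(0)$, the local computations underlying Proposition \ref{prop:brunella} already identify $\opa_b\omega_\rho(L_p,\ol{L}_p)=-b$ and $(\omega_\rho\wedge\ol{\omega}_\rho)(L_p,\ol{L}_p)=|\omega_\rho(L_p)|^2=4|a|^2$, so the goal reduces to showing $-b\ \ge\ \tfrac{\gamma}{1-\gamma}\cdot 4|a|^2$.

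To do this I apply $(\ast)$ at the points $q=(0,\ldots,0,iy_n)$ with $y_n<0$, using the family of test vectors
\[
X_q^{(\mu)}\,:=\,\base{z_1}+\mu\,y_n\,\base{z_n},\qquad \mu\in\C.
\]
Taylor expanding in $y_n$, using $\rho(q)=y_n+O(y_n^2)$, $\rho_n(q)=-i/2+O(y_n)$, $\rho_1(q)=ay_n+O(y_n^2)$, $\rho_{1\ol{1}}(q)=by_n+O(y_n^2)$, together with the identity $\rho_{n\ol{1}}(0)=-i\ol{a}$ derived in the proof of Proposition \ref{prop:brunella}, collapses the left-hand side of $(\ast)$ into
\[
\pa\opa\rho(X_q^{(\mu)},\ol{X}_q^{(\mu)})+\frac{1-\gamma}{-\rho(q)}\bigl|\pa\rho(X_q^{(\mu)})\bigr|^2\ =\ y_n\,F(\mu)+O(y_n^2),
\]
where $F(\mu):=b+2\,\Im(\mu\ol{a})-(1-\gamma)\bigl|a-i\mu/2\bigr|^2$. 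Since $y_n<0$, dividing $(\ast)$ by $y_n$ flips the inequality, and letting $y_n\nearrow 0$ yields $F(\mu)\le 0$ for every $\mu\in\C$. A short calculation shows $F$ is a concave quadratic in $(\Re\mu,\Im\mu)$ whose maximum is attained at $\mu^\ast=2i(1+\gamma)a/(1-\gamma)$, giving $\max_\mu F(\mu)=b+\tfrac{4\gamma}{1-\gamma}|a|^2\le 0$. This is exactly $\bigl(\opa_b\omega_\rho-\tfrac{\gamma}{1-\gamma}\omega_\rho\wedge\ol{\omega}_\rho\bigr)(L_p,\ol{L}_p)\ge 0$.

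The main difficulty, and what I expect to be the crux, is choosing the right scaling in the test family. The naive extension $X_q=\base{z_1}+(\rho_1/\rho_n)\base{z_n}$ from the proof of Proposition \ref{prop:brunella} only delivers the weaker bound $\opa_b\omega_\rho(L_p,\ol{L}_p)\ge\gamma|\omega_\rho(L_p)|^2$, off by a factor of $(1-\gamma)^{-1}$. Allowing the normal component to be a free complex parameter of size $\mu y_n$ is the unique scale on which the linear-in-$\mu$ contribution $2\,\Im(\mu\ol{a})$ and the quadratic penalty $(1-\gamma)|a-i\mu/2|^2$ balance in the boundary limit, and optimizing over $\mu$ then retrieves the sharp constant $\gamma/(1-\gamma)$.
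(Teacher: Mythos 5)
Your proof is correct and is essentially the paper's argument: both restrict the semi-positive complex Hessian to the $2$-plane spanned by the null direction and the complex normal at points $(0,\dots,0,iy_n)$, and extract the boundary inequality in the limit $y_n \nearrow 0$. Your optimization over the test family $\base{z_1}+\mu y_n \base{z_n}$ is exactly the Schur-complement form of the paper's condition $\det H \ge 0$ on the $2\times 2$ minor built from $L_q$ and $N_q=\base{z_n}$ (the weights $-\rho$ and $\rho^2$ on the off-diagonal and normal entries there encode the same $y_n$-scaling you single out), so the two derivations of the sharp constant $\gamma/(1-\gamma)$ coincide.
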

\begin{proof}
	Let $L_p \in \Null_p$, $p \in M$, and take the holomorphic local coordinate $(U, z)$ and vector field $L_q$ defined on $U$ as in the proof of Proposition \ref{prop:brunella}. 
	
	From the local expression of $\omega_{\rho}$
	and the computation in the proof for Proposition \ref{prop:brunella}, 
	we have
	\[
	\omega_{\rho} (L_p)
	= -2i \frac{\pa^2 \rho}{\pa z_1 \pa \ol{z}_n}(0)
	= -2i \frac{\pa^2 \rho}{\pa z_1 \pa {x_n}}(0)
	= 2 \frac{\pa^2 \rho}{\pa z_1 \pa {y_n}}(0).
	\]
	
	Consider the vector field $N = \base{z_n}$ on $U$. From the assumption, the matrix
	\[
	H := \frac{1}{\gamma (-\rho)^\gamma}
	\begin{bmatrix}
	\pa\opa(-(-\rho)^\gamma)(L_q, \ol{L}_q) & \pa\opa(-(-\rho)^\gamma)(L_q, \ol{N}_q) \\
	\pa\opa(-(-\rho)^\gamma)(N_q, \ol{L}_q) & \pa\opa(-(-\rho)^\gamma)(N_q, \ol{N}_q) \\
	\end{bmatrix}
	\]
	is positive semi-definite on $U \cap \Omega$. In particular, $\det H \geq 0$ on $U \cap \Omega$.
	
	We shall evaluate each element of $H$ at 
	$z = (0, \dots, 0, iy_n)$ and estimate its limiting behavior as $y_n \nearrow 0$. 
	From
	\[
	\frac{\pa \opa (-(-\rho)^\gamma)}{\gamma (-\rho)^\gamma}
	= \pa\opa (-\log(-\rho)) -\gamma \pa \log (-\rho) \wedge \opa \log (-\rho),
	\]
	and the computation in the proof for Proposition \ref{prop:brunella}, we have
	\[
	\lim_{y_n \nearrow 0} \frac{\pa\opa(-(-\rho)^\gamma)(L_q, \ol{L}_q) }{\gamma (-\rho)^\gamma}
	= \opa_b \omega_{\rho}(L_p, \ol{L}_p) - \gamma \omega_{\rho} \wedge \ol{\omega}_{\rho}(L_p, \ol{L}_p).
	\]
	For the off diagonal entity, we have
	\begin{align*}
	\frac{\pa\opa(-(-\rho)^\gamma)(L_q, \ol{N}_q) }{\gamma (-\rho)^\gamma}
	&= \frac{1}{(-\rho)}\pa\opa \rho(L_q, \ol{N}_q) +\frac{1-\gamma}{\rho^2}  
	\pa\rho(L_q) \opa \rho(\ol{N}_q)\\
	&= \frac{1}{(-\rho)}\left( \frac{\pa^2\rho}{\pa z_1 \pa \ol{z}_n} 
	+ \frac{\pa^2\rho}{\pa{z_n} \pa \ol{z}_n} 
	\frac{\pa \rho}{\pa z_1}   \left(\frac{\pa \rho}{\pa z_n}\right)^{-1} \right)\\
	&\quad  +2 \frac{1-\gamma}{\rho^2}  \frac{\pa \rho}{\pa z_1} \frac{\pa \rho}{\pa \ol{z}_n},
	\end{align*}
	hence,
	\begin{align*}
	\lim_{y_n \nearrow 0} (-\rho)\frac{\pa\opa(-(-\rho)^\gamma)(L_q, \ol{N}_q) }{\gamma (-\rho)^\gamma}
	&= \frac{\pa^2\rho}{\pa z_1 \pa \ol{z}_n}(0)-i (1-\gamma)  \frac{\pa^2 \rho}{\pa z_1 \pa y_n} (0)\\
	&= \frac{\gamma}{2} i\omega_{\rho}(L_p).
	\end{align*}
	For the other diagonal entity, we have
	\begin{align*}
	\frac{\pa\opa(-(-\rho)^\gamma)(N_q, \ol{N}_q) }{\gamma (-\rho)^\gamma}
	&= \frac{1}{(-\rho)}\pa\opa \rho(N_q, \ol{N}_q) +\frac{1-\gamma}{\rho^2}  
	\pa\rho(N_q) \opa \rho(\ol{N}_q)\\
	&= \frac{1}{(-\rho)} \frac{\pa^2\rho}{\pa z_n \pa \ol{z}_n} 
	+ \frac{1-\gamma}{\rho^2}  \frac{\pa \rho}{\pa z_n} \frac{\pa \rho}{\pa \ol{z}_n},
	\end{align*}
	hence,
	\begin{align*}
	\lim_{y_n \nearrow 0} \rho^2 \frac{\pa\opa(-(-\rho)^\gamma)(N_q, \ol{N}_q) }{\gamma (-\rho)^\gamma}
	&= \frac{1-\gamma}{4} .
	\end{align*}
	Therefore,
	\[
	\lim_{y_n \nearrow 0} \frac{4}{1-\gamma} \rho^2 \det H 
	= \opa_b \omega_{\rho}(L_p, \ol{L}_p)- \frac{\gamma}{1-\gamma} \omega_{\rho} \wedge\ol{\omega}_{\rho}(L_p, \ol{L}_p)\\
	\]
	must be non-negative.
\end{proof}

Similarly, the inequality $S_w(M) \leq S_w(\Omega)$ can be confirmed as follows.

\begin{Lemma}
	\label{lem:stein-necessity}
	Suppose that the defining function $\rho$ enjoys $i\pa\opa \rho^\gamma \geq 0$ on $\ol{\Omega}^{\complement} \cap W$ for some $\gamma > 1$ and open neighborhood $W \supset M$.
	Then,  
	\[
	-\opa_b \omega_{\rho} - \frac{\gamma}{\gamma-1}\omega_{\rho} \wedge \ol{\omega}_{\rho} \geq 0 \quad\text{on $\Null$}.
	\]
\end{Lemma}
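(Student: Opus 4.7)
The plan is to mirror the proof of Lemma \ref{lem:df-necessity}, replacing the approach from inside $\O$ by an approach from outside and propagating signs accordingly. I will fix $L_p \in \Null_p$ at a weakly pseudoconvex point $p \in M$, adopt the same local holomorphic coordinates $(U,z)$ and the same vector field extension $L = \base{z_1} + (\pa\rho/\pa z_1)(\pa\rho/\pa z_n)^{-1}\base{z_n}$ together with $N = \base{z_n}$ on $U$. The underlying identity, valid on $\ol{\O}^{\complement}$ where $\rho > 0$, is
\[
\frac{\pa\opa\rho^\gamma}{\gamma\rho^\gamma} = \frac{\pa\opa\rho}{\rho} + \frac{\gamma-1}{\rho^2}\pa\rho\wedge\opa\rho,
\]
which plays the role that the DF-type expansion of $\pa\opa(-(-\rho)^\gamma)/(\gamma(-\rho)^\gamma)$ played in Lemma \ref{lem:df-necessity}.

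Next, I will form the Hermitian $2\times 2$ matrix $H$ with entries $H_{ij} = \pa\opa\rho^\gamma(X_i,\ol{X}_j)/(\gamma\rho^\gamma)$ for $(X_1,X_2) = (L,N)$. Since $i\pa\opa\rho^\gamma \geq 0$ and $\rho^\gamma > 0$ on $\ol{\O}^{\complement} \cap W$, $H$ is positive semi-definite there, and in particular $\det H \geq 0$. Evaluating each entry at $q = (0,\ldots,0,iy_n)$ with $y_n > 0$ and taking $y_n \searrow 0$, the same bookkeeping as in Lemma \ref{lem:df-necessity} yields
\begin{align*}
\lim_{y_n\searrow 0} H_{11}(q) &= -\opa_b\omega_\rho(L_p,\ol{L}_p) + \gamma\,\omega_\rho\wedge\ol{\omega}_\rho(L_p,\ol{L}_p),\\
\lim_{y_n\searrow 0} \rho(q) H_{12}(q) &= \frac{i\gamma}{2}\omega_\rho(L_p),\\
\lim_{y_n\searrow 0} \rho(q)^2 H_{22}(q) &= \frac{\gamma-1}{4}.
\end{align*}
The boundary limit of $\pa\opa\rho(L_q,\ol{L}_q)/\rho(q)$ coincides with that from inside $\O$, since it is determined by the third-order Taylor coefficients of $\rho$ at $p$; thus the only changes from the DF case are the sign of $\rho$ appearing to odd powers and the replacement of $(1-\gamma)$ by $(\gamma-1)$.

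Finally, from $\rho^2\det H = H_{11}\cdot\rho^2 H_{22} - |\rho H_{12}|^2 \geq 0$ and the elementary identity $\gamma - \gamma^2/(\gamma-1) = -\gamma/(\gamma-1)$, passing to the limit gives
\[
\lim_{y_n\searrow 0}\frac{4}{\gamma-1}\rho^2\det H = -\opa_b\omega_\rho(L_p,\ol{L}_p) - \frac{\gamma}{\gamma-1}\omega_\rho\wedge\ol{\omega}_\rho(L_p,\ol{L}_p),
\]
which must be non-negative because $(\gamma-1)/4 > 0$. The only real obstacle is sign bookkeeping: one must verify that switching from $y_n\nearrow 0$ with $0 < \gamma < 1$ to $y_n\searrow 0$ with $\gamma > 1$ produces exactly the algebraic cancellations needed to land on the coefficient $\gamma/(\gamma-1)$ with the correct sign, rather than $\gamma/(1-\gamma)$ as in the DF case.
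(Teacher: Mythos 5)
Your proposal is correct and follows essentially the same route as the paper's own proof: the same $2\times 2$ matrix $H'$ in the frame $(L,N)$, the same identity $\pa\opa\rho^\gamma/(\gamma\rho^\gamma)=\pa\opa\rho/\rho+\tfrac{\gamma-1}{\rho^2}\pa\rho\wedge\opa\rho$, the same three boundary limits, and the same passage via $\det H'\geq 0$ to the coefficient $-\gamma/(\gamma-1)$. The sign bookkeeping you flag works out exactly as you computed, so there is no gap.
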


\begin{proof}
	Let $L_p \in \Null_p$, $p \in M$, and take the holomorphic local coordinate $(U, z)$ and vector fields $L_q$ and $N_q$ defined on $U$ as in the proof of Proposition \ref{lem:df-necessity}.
	
	From the assumption, the matrix
	\[
	H' := \frac{1}{\gamma \rho^\gamma}
	\begin{bmatrix}
	\pa\opa \rho^\gamma(L_q, \ol{L}_q) & \pa\opa \rho^\gamma(L_q, \ol{N}_q) \\
	\pa\opa \rho^\gamma(N_q, \ol{L}_q) & \pa\opa \rho^\gamma(N_q, \ol{N}_q) \\
	\end{bmatrix}
	\]
	is positive semi-definite on $U \cap \ol{\Omega}^{\complement}$. In particular, $\det H' \geq 0$ on $U \cap \ol{\Omega}^{\complement}$.
	
	We shall evaluate each element of $H$ at 
	$z = (0, \dots, 0, iy_n)$ and estimate its limiting behavior as $y_n \searrow 0$. 
	From
	\[
	\frac{\pa \opa \rho^\gamma}{\gamma \rho^\gamma}
	= \pa\opa \log \rho +\gamma \pa \log \rho \wedge \opa \log \rho,
	\]
	and the computation in the proof for Proposition \ref{prop:brunella}, we have
	\[
	\lim_{y_n \searrow 0} \frac{\pa\opa \rho^\gamma (L_q, \ol{L}_q) }{\gamma \rho^\gamma}
	= -\opa_b \omega_{\rho}(L_p, \ol{L}_p) + \gamma \omega_{\rho} \wedge \ol{\omega}_{\rho}(L_p, \ol{L}_p).
	\]
	For the off diagonal entity, we have
	\begin{align*}
	\frac{\pa\opa \rho^\gamma (L_q, \ol{N}_q) }{\gamma \rho^\gamma}
	&= \frac{1}{\rho}\pa\opa \rho(L_q, \ol{N}_q) +\frac{\gamma-1}{\rho^2}  
	\pa\rho(L_q) \opa \rho(\ol{N}_q)\\
	&= \frac{1}{\rho}\left( \frac{\pa^2\rho}{\pa z_1 \pa \ol{z}_n} 
	+ \frac{\pa^2\rho}{\pa{z_n} \pa \ol{z}_n} 
	\frac{\pa \rho}{\pa z_1}   \left(\frac{\pa \rho}{\pa z_n}\right)^{-1} \right)\\
	&\quad  +2 \frac{\gamma-1}{\rho^2}  \frac{\pa \rho}{\pa z_1} \frac{\pa \rho}{\pa \ol{z}_n},
	\end{align*}
	hence,
	\begin{align*}
	\lim_{y_n \searrow 0} \rho \frac{\pa\opa \rho^\gamma (L_q, \ol{N}_q) }{\gamma \rho^\gamma}
	&= \frac{\pa^2\rho}{\pa z_1 \pa \ol{z}_n}(0)+i (\gamma-1)  \frac{\pa^2 \rho}{\pa z_1 \pa y_n} (0)\\
	&= \frac{ \gamma}{2} i\omega_{\rho}(L_p).
	\end{align*}
	For the other diagonal entity, we have
	\begin{align*}
	\frac{\pa\opa \rho^\gamma(N_q, \ol{N}_q) }{\gamma \rho^\gamma}
	&= \frac{1}{\rho}\pa\opa \rho(N_q, \ol{N}_q) +\frac{\gamma-1}{\rho^2}  
	\pa\rho(N_q) \opa \rho(\ol{N}_q)\\
	&= \frac{1}{\rho} \frac{\pa^2\rho}{\pa z_n \pa \ol{z}_n} 
	+ \frac{\gamma-1}{\rho^2}  \frac{\pa \rho}{\pa z_n} \frac{\pa \rho}{\pa \ol{z}_n},
	\end{align*}
	hence,
	\begin{align*}
	\lim_{y_n \searrow 0} \rho^2 \frac{\pa\opa \rho^\gamma(N_q, \ol{N}_q) }{\gamma \rho^\gamma}
	&= \frac{\gamma-1}{4} .
	\end{align*}
	Therefore,
	\[
	\lim_{y_n \searrow 0} \frac{4}{\gamma-1} \rho^2 \det H'
	= -\opa_b \omega_{\rho}(L_p, \ol{L}_p)  -\frac{\gamma}{\gamma-1}\omega_{\rho} \wedge\ol{\omega}_{\rho}(L_p, \ol{L}_p)\\
	\]
	must be non-negative.
\end{proof}

Next we shall show the inequalities for the indices in the strong sense. 
These inequalities are subtler than those in the weak sense since we have to 
preserve the strict positivity. 
Let us show the inequality $DF_s(M) \le DF_s(\O)$. 

\begin{Lemma}
	\label{lem:df-from-boundary}
	Assume that given positive trivialization $\eta$ of $\C \otimes TM/(T^{1,0}_M \oplus T^{0,1}_M)$ enjoys
	\[
	\opa_b \omega_\eta - \frac{\gamma}{1-\gamma} \omega_\eta \wedge \ol{\omega}_\eta > 0
	\quad \text{on $\Null$}.
	\] 
	Then, any smooth defining function $\rho$ of $\Omega$ such that $\eta = (\pa\rho - \opa \rho)/2$ has Diederich--Forn{\ae}ss exponent $\gamma$.
\end{Lemma}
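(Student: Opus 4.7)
The plan is to reverse the limiting computation of Lemma~\ref{lem:df-necessity} and promote the pointwise strict positivity of $\opa_b\omega_\rho - \tfrac{\gamma}{1-\gamma}\omega_\rho\wedge\ol\omega_\rho$ on $\Null$ to strict plurisubharmonicity of $-(-\rho)^\gamma$ on $\O\cap W$ for a neighborhood $W$ of $M$. A direct computation gives
\[
\pa\opa(-(-\rho)^\gamma) = \gamma(-\rho)^\gamma \Theta_\gamma,\qquad
\Theta_\gamma := (1-\gamma)\frac{\pa\rho\wedge\opa\rho}{\rho^2} + \frac{\pa\opa\rho}{-\rho},
\]
so it suffices to show that the Hermitian form $\Theta_\gamma$ is positive definite near every boundary point. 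By compactness of $M$, we fix $p_0\in M$ and work locally; choose holomorphic coordinates $(U,z)$ around $p_0$ as in Proposition~\ref{prop:brunella}, together with a smooth frame $(L_1,\dots,L_{n-1},N)$ of $T^{1,0}\widetilde M|_U$ where $N=(\pa\rho/\pa z_n)^{-1}\pa/\pa z_n$ and each $L_j$ satisfies $\pa\rho(L_j)=0$. At $p_0$ we further arrange that $L_1,\dots,L_k$ span $\Null_{p_0}$ and $L_{k+1},\dots,L_{n-1}$ diagonalise the Levi form with eigenvalues $\lambda_{k+1},\dots,\lambda_{n-1}>0$.

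In this frame the matrix of $\Theta_\gamma$ has the form: the normal entry $\Theta_\gamma(N,\ol N)\sim(1-\gamma)/\rho^2$ diverges to $+\infty$; the tangential entries are $\Theta_\gamma(L_j,\ol L_k)=\pa\opa\rho(L_j,\ol L_k)/(-\rho)$; and the mixed entries are $\Theta_\gamma(L_j,\ol N)=\pa\opa\rho(L_j,\ol N)/(-\rho)$. Take the Schur complement with respect to the $N$-block. For the Levi-positive indices $j\ge k+1$ we have $\pa\opa\rho(L_j,\ol L_j)(p_0)=\lambda_j>0$, hence these diagonal entries of the Schur complement diverge like $\lambda_j/(-\rho)$. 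For $L,L'\in\Null_{p_0}$, the same Taylor expansion that appears in the proof of Lemma~\ref{lem:df-necessity}, applied now to the $2\times2$ block spanned by $(L,N)$, gives
\[
\rho^2\,\det H_q \longrightarrow \frac{1-\gamma}{4}\Big(\opa_b\omega_\rho(L,\ol{L'})-\frac{\gamma}{1-\gamma}\omega_\rho(L)\overline{\omega_\rho(L')}\Big)(p_0),
\]
so the $\Null$-block of the Schur complement converges, up to a positive scaling, to the Hermitian form $\opa_b\omega_\rho-\tfrac{\gamma}{1-\gamma}\omega_\rho\wedge\ol\omega_\rho|_{\Null_{p_0}}$, which is strictly positive by hypothesis. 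The off-diagonal entries between $\Null$- and Levi-positive directions vanish at $p_0$ by Lemma~\ref{lem:orthogonal}, and are controlled by Cauchy--Schwarz against the diverging Levi-positive diagonal entries. Hence the Schur complement is positive definite for $q\in\O$ close enough to $p_0$, and combined with positivity of $\Theta_\gamma(N,\ol N)$ this yields $\Theta_\gamma>0$ on a neighborhood of $p_0$ intersected with $\O$.

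The main obstacle is the uniformity of these limits and the handling of the cross block between $\Null_{p_0}$- and Levi-positive extensions, since a frame that spans $\Null_{p_0}$ at $p_0$ need not align with $\Null_q$ at nearby boundary points $q\in M$. This is resolved by the combination of pseudoconvexity (which forces $\pa\opa\rho(L_j,\ol L_j)\ge 0$ on $M$, so the Taylor expansion in the outward normal direction is the only source of nontriviality on the $\Null$-block) and the Boas--Straube identity (Proposition~\ref{prop:boas-straube}), which is precisely what allows the third-order derivative of $\rho$ along the normal direction to be expressed via $\opa_b\omega_\rho$ and $\omega_\rho$, as in the calculation already carried out in Lemma~\ref{lem:df-necessity}. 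A posteriori, the proof is therefore a careful rereading of that calculation, replacing ``limit of non-negative quantities" by ``strictly positive limit implies strict positivity near the boundary''.
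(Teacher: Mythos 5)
Your computation of the complex Hessian of $-(-\rho)^\gamma$ in the frame $(L_1,\dots,L_r,M_1,\dots,M_{n-r-1},N)$ adapted to $\Null_{p_0}$, the identification of the asymptotic orders of the blocks, and the Schur-complement reduction producing $\opa_b\omega_\rho-\tfrac{\gamma}{1-\gamma}\omega_\rho\wedge\ol\omega_\rho$ on the $\Null$-block are exactly the paper's argument. The gap is in the last step, and you have correctly located it yourself without actually closing it. Your asymptotics are computed along the normal ray through $p_0$ in the coordinate adapted to $p_0$; they do not by themselves give positivity of $\Theta_\gamma$ at points $q\in\O$ near $p_0$ whose nearest boundary point is some $q_0\neq p_0$. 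At such $q$ the numerators $\pa\opa\rho(L_j,\ol{L}_k)$ on the $\Null_{p_0}$-block no longer vanish at the relevant boundary point (the span of $L_1(q_0),\dots,L_r(q_0)$ need not be $\Null_{q_0}$, and the Levi form there may be positive but arbitrarily small), so the quotients by $-\rho$ are not uniformly controlled by the limit you computed at $p_0$. The sentence claiming this is ``resolved by pseudoconvexity and the Boas--Straube identity'' does not supply an argument: Proposition \ref{prop:boas-straube} is what makes $\opa_b\omega_\rho$ Hermitian on $\Null$ and enters the computation of the third normal derivative, but it says nothing about how the $\lambda_{q_0}/(-\rho)$ contribution competes with the bounded third-order terms uniformly as $q_0$ varies. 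Making that competition work requires an additional device, e.g.\ a conic neighborhood of $\Null$ in $T^{1,0}_M$ on which the third-order form stays positive together with a uniform lower bound for the Levi form outside it; you have not set this up.

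The paper sidesteps this entirely: it formulates a Claim asserting positivity of $i\pa\opa(-(-\rho)^\gamma)$ only on the normal segment $\{(0,\dots,0,iy_n):0<-y_n\ll1\}$ in the coordinate adapted to $p$, observes that such coordinates can be chosen smoothly in $p\in M$, and then applies the inverse function theorem to the map $(p,iy_n)\mapsto\varphi(p,(0,\dots,0,iy_n))$ to conclude that these segments sweep out an open neighborhood $W$ of $M$, so that the union of the one-dimensional verifications already covers $\O\cap W$. If you keep your Schur-complement computation but replace your neighborhood claim by this covering argument (or, alternatively, carry out the uniform conic-neighborhood estimate in full), the proof is complete.
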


\begin{proof}
	It is enough to check $i\pa\opa(-(-\rho)^\gamma) > 0$ locally in the following sense:
\begin{Claim}
	Let $p \in M$ and $(U,z)$ be a holomorphic local coordinate around $p$ so that 
	$z(p) = 0$, $(d\rho)_p = (dy_n)_0$ and 
	\[
	\frac{\pa^2 \rho}{\pa z_j \pa z_k} (0) = 
	\frac{\pa^2 \rho}{\pa \ol{z}_j \pa \ol{z}_k} (0) = 0
	\]
	for any $j, k = 1,2, \dots, n$.
Then, $i\pa\opa(-(-\rho)^\gamma) > 0$ holds on  $\{ (0, \dots, 0, iy_n) \mid 0 < -y_n \ll 1\}$. 
\end{Claim}
	
	Note that such a local coordinate can be taken smoothly with respect to $p \in M$,
	namely, we have a smooth map $\varphi(p,z) \colon S \times U \to \widetilde{M}$ such that 
each $\varphi(p, \cdot)\colon U \to \widetilde{M}$ gives local holomorphic coordinate around $p$ satisfying the above condition, where $p \in S \subset M$ and $U \subset \mathbb{C}^n$.
Then, we put $\widetilde{U} := \{ (p, (0,\dots,0,iy_n)) \in S \times U \mid |y_n| \ll 1\}$, which has the same real dimension as $\widetilde{M}$, and restrict $\varphi$ on $\widetilde{U}$. 
From the inverse function theorem, we see that $\varphi|\widetilde{U}$ gives a diffeomorphism from $(p,0) \in V \subset \widetilde{U}$ to $p \in \varphi(V) \subset \widetilde{M}$. The Claim states that $i\partial\overline{\partial}(-(-\rho)^\gamma)) > 0$ holds on $\varphi(V) \cap \Omega$. Our $M$ is covered by such open sets, hence, their union gives the desired $W$, for which $i\partial\overline{\partial}(-(-\rho)^\gamma)) > 0$ holds on $\Omega \cap W$.

Now we start to show Claim. 
By a linear transformation, we may assume that $\Null_p \simeq \C^r \times \{0\} \subset T^{1,0}_{M,p} \simeq \C^{n-1} \times \R$ where $r := \dim_{\CC} \Null_p$ without loss of generality. 
	We define a local frame of vector fields 
	\[
	\{(L_1)_z, \dots, (L_r)_z, (M_1)_z, \dots, (M_{n-r-1})_z, N_q \}
	\]
	on $U$ by 
	\begin{align*}
	(L_{j})_z &:= \base{z_j} + \frac{\pa \rho}{\pa z_j} \left(\frac{\pa \rho}{\pa z_n}\right)^{-1} \base{z_n}\\
	(M_{k})_z &:= \base{z_{r+k}} + \frac{\pa \rho}{\pa z_{r+k}} \left(\frac{\pa \rho}{\pa z_n}\right)^{-1} \base{z_n}\\
	N_q &:= \base{z_n}
	\end{align*}
	for $1 \leq j \leq r$, $1 \leq k \leq n-r-1$. 
	Note that $\{(L_1)_p, \dots, (L_r)_p\}$ forms a basis of $\Null_p$ 
	and the Levi form $\lambda_p$ at $p$ is positive definite on the subspace spanned by $\{(M_1)_p, \dots, (M_{n-r-1})_p\}$.

	The components of $\pa\opa (-(-\rho)^\gamma)$ at $z = (0, \dots, y_n)$ can be computed by a similar way to the proof for Proposition \ref{prop:brunella}, 
	and their boundary behavior is as follows:
	\[
	\lim_{y_n \nearrow 0} \frac{\pa\opa(-(-\rho)^\gamma)((L_j)_z, (\ol{L_k})_z) }{\gamma (-\rho)^\gamma}
	= \opa_b \omega_{\eta}((L_j)_p, (\ol{L_k})_p) - \gamma \omega_{\eta} \wedge \ol{\omega}_{\eta}((L_j)_p, (\ol{L_k})_p),
	\]
	\begin{align*}
	\lim_{y_n \nearrow 0} (-\rho)\frac{\pa\opa(-(-\rho)^\gamma)((L_j)_z, \ol{N}_q) }{\gamma (-\rho)^\gamma}
	&= \frac{\gamma}{2} i\omega_{\eta}((L_j)_p),
	\end{align*}
	for $1\leq j, k \leq r$, and 
	\begin{align*}
	\lim_{y_n \nearrow 0} \rho^2 \frac{\pa\opa(-(-\rho)^\gamma)(N_q, \ol{N}_q) }{\gamma (-\rho)^\gamma}
	&= \frac{1-\gamma}{4}.
	\end{align*}
	Also, we have
	\begin{align*}
	\lim_{y_n \nearrow 0} (-\rho)\frac{\pa\opa(-(-\rho)^\gamma)((M_j)_z, (\ol{M_k})_z) }{\gamma (-\rho)^\gamma}
	&= \frac{\pa^2 \rho}{\pa z_{r+j}\pa \ol{z}_{r+k}}(0) \\
	&= \lambda((M_j)_p, (\ol{M_k})_p),
	\end{align*}
	\begin{align*}
	\lim_{y_n \nearrow 0} (-\rho)\frac{\pa\opa(-(-\rho)^\gamma)((M_j)_z, \ol{N}_q) }{\gamma (-\rho)^\gamma}
	&= \frac{\gamma}{2} i \frac{\pa^2 \rho}{\pa z_{r+j} \pa {y_n}}(0) =: b_j
	\end{align*}
	for $1\leq j, k \leq n-r-1$, and 
	\begin{align*}
	\lim_{y_n \nearrow 0} \frac{\pa\opa(-(-\rho)^\gamma)((L_j)_z, (\ol{M_k})_z) }{\gamma (-\rho)^\gamma}
	&= - \frac{\pa^3 \rho}{\pa z_j \pa \ol{z}_{r+k} \pa y_n}(0) - 4 \gamma \frac{\pa^2 \rho}{\pa z_j \pa y_n}  {\frac{\pa^2 \rho}{\pa \ol{z}_{r+k} \pa y_n}}(0)\\
	&=: a_{j,k}
	\end{align*}
	for $1 \leq j \leq r$ and $1 \leq k \leq n-r-1$. 
	
	In short, the matrix $\widetilde{H}_z$
	representing $\pa\opa(-(-\rho)^\gamma)/\gamma (-\rho)^\gamma$ 
	with respect to the local frame
	behaves asymptotically 
	\[
	\widetilde{H}_{(0, \dots, iy_n)} \sim
	\begin{bmatrix}
	\opa_b \omega_{\eta} - \gamma \omega_{\eta} \wedge \ol{\omega}_{\eta} & A & \displaystyle \frac{i \gamma \omega_{\eta}}{2 |y_n|} \\[3ex]
	{}^t\ol{A} & \displaystyle  \frac{\lambda}{|y_n|} & \displaystyle \frac{B}{|y_n|}\\[3ex]
	\displaystyle \frac{-i\gamma \ol{\omega}_{\eta} }{2 |y_n|}&  \displaystyle \frac{{}^t\ol{B}}{|y_n|}& \displaystyle \frac{1-\gamma}{4|y_n|^2} \\
	\end{bmatrix}
	\]
	as $y_n \nearrow 0$, where $A := (a_{j,k}) \in M(r, n-r-1)$, $B := (b_j) \in \C^{n-r-1}$. 
	Since
	\[
	\opa_b \omega_\eta - \frac{\gamma}{1-\gamma} \omega_\eta \wedge \ol{\omega}_\eta > 0
	\quad \text{on $\Null_p$}
	\]
	and $\lambda_p > 0$ on $\bigoplus_{k=1}^{n-r-1} \C(M_k)_p$, 
	it follows that $\widetilde{H}_{(0, \dots, iy_n)}$ is positive definite for $0 < -y_n \ll 1$,
	and this completes the proof.
\end{proof}

A similar argument yields the inequality $S_s(\O) \le S_s(M)$. 

\begin{Lemma}
	\label{lem:stein-from-boundary}
	Assume that given positive trivialization $\eta$ of $\C \otimes TM/(T^{1,0}_M \oplus T^{0,1}_M)$ enjoys
	\[
	-\opa_b \omega_\eta - \frac{\gamma}{\gamma-1} \omega_\eta \wedge \ol{\omega}_\eta > 0
	\quad \text{on $\Null$}.
	\] 
	Then, any smooth defining function $\rho$ of $\Omega$ such that $\eta = (\pa\rho - \opa \rho)/2$ has Steinness exponent $\gamma$.
\end{Lemma}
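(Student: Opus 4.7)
The plan is to mirror the proof of Lemma \ref{lem:df-from-boundary}, substituting the exterior weight $\rho^{\gamma}$ on $\overline{\Omega}^{\complement}$ for the interior weight $-(-\rho)^{\gamma}$ on $\Omega$ and taking one-sided boundary limits as $y_n \searrow 0$ from the outside instead of $y_n \nearrow 0$ from the inside. As in the Diederich--Forn{\ae}ss case, the problem is local on $M$: for each $p \in M$ one works in the smoothly varying adapted coordinates produced by the Claim in Lemma \ref{lem:df-from-boundary} and aims to show that $i\pa\opa \rho^{\gamma} > 0$ on the outward normal ray $\{(0,\dots,0,iy_n) : 0 < y_n \ll 1\}$. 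The same inverse function theorem argument given there then patches these rays into a one-sided neighborhood $W$ of $M$ on which $\rho^{\gamma}$ is strictly plurisubharmonic on $\overline{\Omega}^{\complement} \cap W$.

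Fix $p \in M$, set $r := \dim_{\C}\Null_p$, and choose the holomorphic coordinates and the local frame $\{L_1,\dots,L_r,M_1,\dots,M_{n-r-1},N\}$ exactly as in Lemma \ref{lem:df-from-boundary}, so that $(L_j)_p$ spans $\Null_p$, the Levi form is positive definite on the span of $(M_k)_p$, and $N = \pa/\pa z_n$. Let $\widetilde{H}'_z$ denote the matrix of $\pa\opa \rho^{\gamma}/(\gamma \rho^{\gamma})$ with respect to this frame. Using the identity
\[
\frac{\pa\opa \rho^{\gamma}}{\gamma \rho^{\gamma}} = \pa\opa \log \rho + \gamma\, \pa \log \rho \wedge \opa \log \rho
\]
and repeating the boundary-limit computations of Lemma \ref{lem:stein-necessity} for the $L$ and $N$ entries, together with the off-diagonal and $M$-block computations of Lemma \ref{lem:df-from-boundary} (with signs adjusted for the exterior limit), one obtains the asymptotics
\[
\widetilde{H}'_{(0,\dots,0,iy_n)} \sim
\begin{bmatrix}
-\opa_b \omega_{\eta} + \gamma\, \omega_{\eta} \wedge \ol{\omega}_{\eta} & A' & \dfrac{i\gamma\, \omega_{\eta}}{2 y_n} \\[2ex]
{}^t\ol{A'} & \dfrac{\lambda}{y_n} & \dfrac{B'}{y_n} \\[2ex]
\dfrac{-i\gamma\, \ol{\omega}_{\eta}}{2 y_n} & \dfrac{{}^t\ol{B'}}{y_n} & \dfrac{\gamma-1}{4\, y_n^{2}}
\end{bmatrix}
\qquad (y_n \searrow 0),
\]
where $A' \in M(r,n-r-1)$ and $B' \in \C^{n-r-1}$ are bounded.

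To conclude, I would apply the Schur complement first with respect to the $(M,\overline{M})$ block, which is positive of size $\lambda/y_n$; the induced corrections to the $(L,\overline{L})$ and $(N,\overline{N})$ blocks are $O(1)$ and $O(1/y_n)$ respectively, so the leading-order structure is preserved. Then apply the Schur complement with respect to the $(N,\overline{N})$ block, which is positive since $\gamma > 1$ and of size $(\gamma-1)/(4y_n^{2})$. A direct computation shows that this second reduction shifts the $(L,\overline{L})$-block by precisely $-\frac{\gamma^{2}}{\gamma-1}\,\omega_{\eta}\wedge\ol{\omega}_{\eta}$, yielding
\[
-\opa_b \omega_{\eta} + \gamma\, \omega_{\eta}\wedge\ol{\omega}_{\eta} - \frac{\gamma^{2}}{\gamma-1}\,\omega_{\eta}\wedge\ol{\omega}_{\eta} = -\opa_b \omega_{\eta} - \frac{\gamma}{\gamma-1}\,\omega_{\eta}\wedge\ol{\omega}_{\eta},
\]
which is strictly positive on $\Null_p$ by hypothesis. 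Hence $\widetilde{H}'_{(0,\dots,0,iy_n)} > 0$ for all sufficiently small $y_n > 0$, as required.

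The main obstacle is not any single computation (each one is a sign-flipped analog of a calculation already in the paper), but the bookkeeping: one must verify that the cancellation $\gamma - \gamma^{2}/(\gamma-1) = -\gamma/(\gamma-1)$ emerges from the Schur reduction so that the shifted $(L,\overline{L})$-block agrees exactly with the expression $-\opa_b\omega_{\eta} - \frac{\gamma}{\gamma-1}\omega_{\eta}\wedge\ol{\omega}_{\eta}$ appearing in the hypothesis. This is precisely the reason for the constant $\gamma/(\gamma-1)$ in the definition of $S_s(M)$, and it is the choice that makes the exterior-side inequality the correct counterpart of the interior-side inequality used for $DF_s(M)$.
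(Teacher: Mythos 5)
Your proposal is correct and follows essentially the same route as the paper: the paper's proof likewise reduces to the same asymptotic block matrix $\widetilde{H}'_{(0,\dots,0,iy_n)}$ in the adapted frame and concludes positive definiteness from the hypothesis together with $\lambda_p>0$. Your explicit two-step Schur reduction, producing the cancellation $\gamma-\gamma^2/(\gamma-1)=-\gamma/(\gamma-1)$, simply spells out the linear-algebra step that the paper leaves implicit in its final ``it follows that'' sentence.
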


\begin{proof}
	Consider the same local situation as in the proof of Lemma \ref{lem:df-from-boundary}.
	It is enough to show that $i\pa\opa \rho^\gamma > 0$ holds 
	on $\{ (0, \dots, 0, iy_n) \mid 0 < y_n \ll 1\}$. 
	
	From computation similar to the proof of Lemma \ref{lem:df-from-boundary}, 
	the matrix $\widetilde{H'_z}$
	representing $\pa\opa \rho^\gamma/\gamma \rho^\gamma$ 
	with respect to the local frame
	behaves asymptotically 
	\[
	\widetilde{H'}_{(0, \dots, iy_n)} \sim
	\begin{bmatrix}
	-\opa_b \omega_{\eta} + \gamma \omega_{\eta} \wedge \ol{\omega}_{\eta} & A' & \displaystyle \frac{i \gamma \omega_{\eta}}{2 |y_n|} \\[3ex]
	{}^t\ol{A'} & \displaystyle  \frac{\lambda}{|y_n|} & \displaystyle \frac{B'}{|y_n|}\\[3ex]
	\displaystyle \frac{-i\gamma \ol{\omega}_{\eta} }{2 |y_n|}&  \displaystyle \frac{{}^t\ol{B'}}{|y_n|}& \displaystyle \frac{\gamma-1}{4|y_n|^2} \\
	\end{bmatrix}
	\]
	as $y_n \searrow 0$, for some $A' \in M(r, n-r-1)$, $B' \in \C^{n-r-1}$. 
	Since
	\[
	-\opa_b \omega_\eta - \frac{\gamma}{\gamma-1} \omega_\eta \wedge \ol{\omega}_\eta > 0
	\quad \text{on $\Null_p$}
	\]
	and $\lambda_p > 0$ on $\bigoplus_{k=1}^{n-r-1} \C(M_k)_p$, 
	it follows that $\widetilde{H}_{(0, \dots, iy_n)}$ is positive definite for $0 < y_n \ll 1$,
	and this completes the proof.
\end{proof}

\begin{proof}[\bf Proof of Theorem \ref{thm:4 kinds of DF,S-relation}]
	The inequalities $DF_s(\O) \le DF_w(\O)$ and $S_w(\O) \le S_w(\O)$ are clear from their definitions.
	Moreover, $DF_w(\O) \le DF_w(M)$, $S_w(M) \le S_w(\O)$, $DF_s(M) \le DF_s(\O)$ and $S_s(\O) \le S_s(M)$ follow from Lemma \ref{lem:df-necessity}, \ref{lem:stein-necessity}, \ref{lem:df-from-boundary} and \ref{lem:stein-from-boundary}, respectively. 
\end{proof}


\section{\bf Sufficient conditions for indices to agree}

We begin this section with a simple counter-example where the indices in the strong and weak sense do not agree. 

\begin{Example}
\label{ex:counter-example}
	Let $C$ be a compact Riemann surface. Let $\widetilde{M} = C \times \CC$ and $\O = C \times \D$, where $\D$ is the unit disk. Then $\O$ is a relatively compact domain in $\widetilde{M}$ with the boundary $ M = C \times \partial \D$, which is a Levi-flat real hypersurface. 
	From the maximum principle, we see that there does not exist any strictly plurisubharmonic function on $\O \cap W$ 
	or $\ol{\O}^{\complement} \cap W$ for any neighborhood $W$ of $M$ in $\widetilde{M}$ . This implies that 
	\[
		DF_s(\O)=0 \quad \text{ and } \quad S_s(\O)=\infty.
	\]
On the other hand, for a coordinate $(z,w)$ of $\widetilde{M} = C \times \CC$, 
	\[
		\rho(z,w) = |w|^2 - 1
	\]
	is a defining function of $\O$ which is plurisubharmonic near the boundary $M$. 
	This implies that $DF_w(\O) = 1$ and $S_w(\O) = 1$.
	All together with Theorem \ref{thm:4 kinds of DF,S-relation}, we have
	\begin{align*}
		DF_s(M)=0 \le DF_s(\O)=0 & <  DF_w(\O)=1 \le DF_w(M)=1, \\
		S_w(M)=1 \le S_w(\O)=1 & < S_s(\O)=\infty \le S_s(M)=\infty.	
	\end{align*}
\end{Example}

This Example \ref{ex:counter-example} shows that 4 kinds of Diederich--Forn{\ae}ss indices (or Steinness indices) in Definition \ref{def:DF,S indices} can not be equal in general.
Our second theorem gives a sufficient condition for these Diederich--Forn{\ae}ss 
and Steinness indices agree, respectively. 

\begin{Theorem}
\label{thm:DF,S-all same}
Let $\O$ be a relatively compact domain in a complex manifold $\widetilde{M}$ with smooth pseudoconvex boundary $M$.
Assume that there exists a positive trivialization $\eta_1$ of $\C \otimes TM/T^{1,0}_M \oplus T^{0,1}_M$ that satisfies either $\opa_b \omega_{\eta_1} > 0$ on $\Null$ or $\opa_b \omega_{\eta_1} < 0$ on $\Null$. Then, it holds that 
\begin{align*}
	DF_s(M) = DF_s(\O)& =  DF_w(\O) = DF_w(M), \\
	S_w(M) = S_w(\O) &= S_s(\O) = S_s(M).
\end{align*}
\end{Theorem}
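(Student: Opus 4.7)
By Theorem \ref{thm:4 kinds of DF,S-relation} it suffices to establish the two inequalities $DF_w(M) \le DF_s(M)$ and $S_s(M) \le S_w(M)$. We describe the Diederich--Forn{\ae}ss case; the Steinness case is parallel. Fix $\gamma\in(0,DF_w(M))$, pick $\gamma_0\in(\gamma,DF_w(M))$, and take a positive trivialization $\eta_0$ realizing the weak inequality at $\gamma_0$: $\opa_b\omega_{\eta_0}-c_0\,\omega_{\eta_0}\wedge\ol\omega_{\eta_0}\ge 0$ on $\Null$ with $c_0:=\gamma_0/(1-\gamma_0)>c:=\gamma/(1-\gamma)$. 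The plan is to perturb $\eta_0$ by the hypothesized $\eta_1$ so as to produce a new positive trivialization for which the strict inequality at $\gamma$ holds.

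Write $\eta_1=\varphi_1\eta_0$ with $\varphi_1>0$ and set $\tilde\eta_s:=\varphi_1^s\eta_0$ for $s\in\R$. Lemma \ref{lem:two D'Angelo 1-forms-relation}, combined with the identities $\pa_b\log\varphi_1=\omega_{\eta_1}-\omega_{\eta_0}$ and $\pa_b\opa_b\log\varphi_1=\opa_b\omega_{\eta_0}-\opa_b\omega_{\eta_1}$ on $\Null$, gives the affine interpolations $\omega_{\tilde\eta_s}=(1-s)\omega_{\eta_0}+s\omega_{\eta_1}$ and $\opa_b\omega_{\tilde\eta_s}=(1-s)\opa_b\omega_{\eta_0}+s\opa_b\omega_{\eta_1}$ on $\Null$. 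Expanding $\omega_{\tilde\eta_s}\wedge\ol\omega_{\tilde\eta_s}$ algebraically then yields the exact identity (on $\Null$)
\begin{align*}
\opa_b\omega_{\tilde\eta_s} - c\,\omega_{\tilde\eta_s}\wedge\ol\omega_{\tilde\eta_s}
&= (1-s)\bigl[\opa_b\omega_{\eta_0}-c\,\omega_{\eta_0}\wedge\ol\omega_{\eta_0}\bigr] \\
&\quad {}+ s\bigl[\opa_b\omega_{\eta_1}-c\,\omega_{\eta_1}\wedge\ol\omega_{\eta_1}\bigr] \\
&\quad {}+ c\,s(1-s)\,(\omega_{\eta_0}-\omega_{\eta_1})\wedge(\ol\omega_{\eta_0}-\ol\omega_{\eta_1}).
\end{align*}

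Evaluate at $L\in\Null_p$ with $\|L\|=1$ and set $u:=\omega_{\eta_0}(L)$, $v:=\omega_{\eta_1}(L)$, $\kappa_1:=\opa_b\omega_{\eta_1}(L,\ol L)$. Suppose first that $\opa_b\omega_{\eta_1}>0$ on $\Null$. Bounding the first bracket below by $(1-s)(c_0-c)|u|^2$ via the weak inequality for $\eta_0$ and expanding $|u-v|^2$, the right-hand side is bounded below by a real quadratic polynomial in $u\in\C$ whose $|u|^2$-coefficient is the positive number $(1-s)[c_0-c(1-s)]$ when $s>0$ is small. Completing the square and minimizing over $u$ yields a lower bound of the shape $s\bigl[\kappa_1-\tfrac{c c_0\, s}{c_0-c(1-s)}|v|^2\bigr]$. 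Since the unit sphere bundle of $\Null$ is compact (a closed subset of the unit sphere bundle of $T^{1,0}_M$ with respect to any smooth Hermitian metric on the compact manifold $M$), we have $\kappa_1\ge c_1>0$ and $|v|^2\le c_2$ uniformly, so this lower bound is strictly positive for $s>0$ sufficiently small. Hence $\tilde\eta_s$ realizes the strict inequality at $\gamma$, proving $\gamma\le DF_s(M)$. If instead $\opa_b\omega_{\eta_1}<0$ on $\Null$, the same computation succeeds with $s<0$ small. The Steinness inequality $S_s(M)\le S_w(M)$ is proved by the same scheme after replacing $\opa_b\omega$ by $-\opa_b\omega$ and $c$ by $\gamma/(\gamma-1)$ in the identity, with the sign of $s$ chosen opposite to that of $\opa_b\omega_{\eta_1}$.

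The main obstacle is that a naive termwise analysis of the exact identity fails: the middle bracket $s[\opa_b\omega_{\eta_1}-c\,\omega_{\eta_1}\wedge\ol\omega_{\eta_1}]$ need not be non-negative, because at points where $|\omega_{\eta_1}|$ is large relative to $\opa_b\omega_{\eta_1}$ the positive factor $c\omega_{\eta_1}\wedge\ol\omega_{\eta_1}$ pointwise dominates $\opa_b\omega_{\eta_1}$. The compensating cross term $\mathrm{Re}(u\ol v)$ is hidden in both the third remainder and in the expansion of $\omega_{\tilde\eta_s}\wedge\ol\omega_{\tilde\eta_s}$; only after regrouping the full expression as a single quadratic form in $u$ and taking its minimum does the uniform strict positivity of $\opa_b\omega_{\eta_1}$ on the compact sphere bundle of $\Null$ translate into strict positivity of the desired quantity for $|s|$ small.
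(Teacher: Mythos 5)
Your proposal is correct and follows essentially the same route as the paper: reduce to $DF_w(M)\le DF_s(M)$ and $S_s(M)\le S_w(M)$ via Theorem \ref{thm:4 kinds of DF,S-relation}, perturb along the family $\varphi_1^s\eta_0$ of Lemma \ref{lem:D'Angelo 1-form-perturbation}, use the affine interpolation of $\omega$ and $\opa_b\omega$ on $\Null$, and conclude by compactness of the unit sphere bundle of $\Null$ together with the strict (semi-)definiteness of $\opa_b\omega_{\eta_1}$. The only difference is cosmetic: you control the cross term by writing the exact identity with the remainder $c\,s(1-s)(\omega_{\eta_0}-\omega_{\eta_1})\wedge(\ol\omega_{\eta_0}-\ol\omega_{\eta_1})$ and completing the square in $u$, whereas the paper absorbs it with the weighted inequality $2\Re(u\ol v)\le \epsilon^{-1/2}|u|^2+\epsilon^{1/2}|v|^2$ and regroups into two brackets.
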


In the proof of Theorem \ref{thm:DF,S-all same}, we will show that $DF_w(M) \le DF_s(M)$ and $S_s(M) \le S_w(M)$. 
In Liu's work \cite[Lemma 2.6]{Liu1}, the Kohn weight $\|z\|^2$ on $\C^n$ was used to show the statement equivalent to $DF_w(M) \le DF_s(M)$ for smoothly bounded pseudoconvex domains in $\C^n$. Now since such a strictly plurisubharmonic function in $\widetilde{M}$ does not exist in general, 
we exploit this given $\eta_1$ instead, for which the following observation is crucial.

\begin{Lemma}
\label{lem:D'Angelo 1-form-perturbation}
	Let $\eta_0$ and $\eta_1$ be positive trivializations of $\C \otimes TM/T^{1,0}_M \oplus T^{0,1}_M$.
	Then, for all $\epsilon \in \RR$, there exists a positive trivialization $\eta_\epsilon$ of $\C \otimes TM/T^{1,0}_M \oplus T^{0,1}_M$ such that 
	\[
	\omega_{\eta_\epsilon} = (1-\epsilon) \omega_{\eta_0} + \epsilon \omega_{\eta_1}.
	\]
\end{Lemma}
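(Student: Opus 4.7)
The plan is to exploit Lemma \ref{lem:two D'Angelo 1-forms-relation}, which gives a clean transformation law for the D'Angelo $(1,0)$-form under rescaling of the trivialization. Since $\eta_0$ and $\eta_1$ are both positive trivializations of the real rank-1 quotient bundle $\C \otimes TM/(T^{1,0}_M \oplus T^{0,1}_M)$, there exists a smooth positive function $\varphi \in C^\infty(M)$ with $\eta_1 = \varphi \eta_0$. This $\varphi$ will serve as the single scalar ingredient out of which the interpolating family is built.

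Next I would simply define, for each $\epsilon \in \R$, the trivialization
\[
\eta_\epsilon := \varphi^\epsilon \eta_0.
\]
Because $\varphi > 0$, the power $\varphi^\epsilon = \exp(\epsilon \log \varphi)$ is a smooth positive function for every real $\epsilon$, so $\eta_\epsilon$ is again a positive trivialization. Note that this uses the exponential (rather than affine) interpolation of the conformal factor; this is dictated by the fact that the D'Angelo form transforms additively through $\partial_b \log$ of the conformal factor, not through the factor itself.

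To verify the identity, I would apply Lemma \ref{lem:two D'Angelo 1-forms-relation} twice. First, applied to the pair $(\eta_0, \eta_1 = \varphi \eta_0)$, it yields
\[
\omega_{\eta_1} = \omega_{\eta_0} + \partial_b \log \varphi \quad \text{on } \Null,
\]
so $\partial_b \log \varphi = \omega_{\eta_1} - \omega_{\eta_0}$ on $\Null$. Second, applied to the pair $(\eta_0, \eta_\epsilon = \varphi^\epsilon \eta_0)$, it yields
\[
\omega_{\eta_\epsilon} = \omega_{\eta_0} + \partial_b \log \varphi^\epsilon = \omega_{\eta_0} + \epsilon \, \partial_b \log \varphi \quad \text{on } \Null.
\]
Substituting the first identity into the second gives
\[
\omega_{\eta_\epsilon} = \omega_{\eta_0} + \epsilon(\omega_{\eta_1} - \omega_{\eta_0}) = (1-\epsilon)\omega_{\eta_0} + \epsilon \omega_{\eta_1},
\]
which is the required equality.

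There is no serious obstacle here: the statement is essentially a linearity in the logarithm of the conformal factor, and the entire content is already encoded in Lemma \ref{lem:two D'Angelo 1-forms-relation}. The one conceptual observation, worth emphasizing, is that linear interpolation of the D'Angelo forms on the right-hand side corresponds to multiplicative (i.e.\ geometric-mean) interpolation of the trivializations on the left, which is what makes positivity of $\eta_\epsilon$ automatic and allows $\epsilon$ to range over all of $\R$ rather than only $[0,1]$.
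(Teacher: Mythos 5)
Your proof is correct and is essentially the paper's own argument: both factor $\eta_1 = \varphi\,\eta_0$, interpolate the conformal factor multiplicatively (geometrically), and apply Lemma \ref{lem:two D'Angelo 1-forms-relation} twice to get $\omega_{\eta_\epsilon} = \omega_{\eta_0} + \epsilon\,\partial_b\log|\varphi| = (1-\epsilon)\omega_{\eta_0} + \epsilon\,\omega_{\eta_1}$ on $\Null$. The one small inaccuracy is your claim that $\varphi > 0$: positivity of a trivialization only means semi-positivity of the Levi form, so on a Levi-flat piece both $\eta$ and $-\eta$ are positive trivializations and $\varphi$ need only be non-vanishing; the paper therefore sets $\eta_\epsilon := |\varphi|^\epsilon \eta_0$, and since Lemma \ref{lem:two D'Angelo 1-forms-relation} is stated with $\log|\varphi|$ your computation goes through verbatim with this replacement.
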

\begin{proof}
	Since $\eta_0$ and $\eta_1$ are non-vanishing, there exists a non-vanishing smooth function $\varphi$ on $M$ such that $ \eta_1 = \varphi \eta_0$. Define
	\[
		\eta_\epsilon := |\varphi|^{\epsilon} \eta_0 = |\varphi|^{\epsilon} \varphi^{-1} \eta_1.
	\]
	Then, by Lemma \ref{lem:two D'Angelo 1-forms-relation},
	\begin{align*}
		\omega_{\eta_\epsilon} &= \omega_{\eta_0} + \epsilon \pa_b \log|\varphi|, \\
		\omega_{\eta_\epsilon} &= \omega_{\eta_1} + (\epsilon-1) \pa_b \log|\varphi|.
	\end{align*}
	Combining those two equations yields
	\[
		\omega_{\eta_\epsilon} = (1-\epsilon) \omega_{\eta_0} + \epsilon \omega_{\eta_1}.
	\]
\end{proof}

\begin{proof}[Proof of Theorem \ref{thm:DF,S-all same}] 
In view of Theorem \ref{thm:4 kinds of DF,S-relation}, it suffices to prove that $DF_w(M) \le DF_s(M)$ and $S_s(M) \le S_w(M)$. 
First, assume that $\opa_b \omega_{\eta_1} > 0$ on $\Null$.
We shall show $DF_w(M) \le DF_s(M)$.
Suppose that, for some $0 < \gamma_0 < 1$ and positive trivialization $\eta_0$, 
\[
	\ol{\partial}_b \omega_{\eta_0} - \frac{\gamma_0}{1 - \gamma_0} \omega_{\eta_0} \wedge \ol{\omega}_{\eta_0} \ge 0 \quad \text{ on } \Null.
\]
From Lemma \ref{lem:D'Angelo 1-form-perturbation}, there exists $\eta_\epsilon$ such that $\omega_{\eta_\epsilon} = (1-\epsilon) \omega_{\eta_0} + \epsilon \omega_{\eta_1}$ for all $0 < \epsilon < 1$. We denote $\omega_{\eta_0}$, $\omega_{\eta_1}$ and $\omega_{\eta_\epsilon}$ by $\omega_0$, $\omega_1$ and $\omega_\epsilon$, respectively. 
Then, for each $0 < \gamma < \gamma_0$,
\begin{align*}
	& \ol{\partial}_b \omega_{\epsilon} - \frac{\gamma}{1 - \gamma} \omega_{\epsilon} \wedge \ol{\omega}_{\epsilon} \\
	=& \ol{\partial}_b ((1-\epsilon) \omega_0 + \epsilon \omega_1) - \frac{\gamma}{1 - \gamma} ((1-\epsilon) \omega_0 + \epsilon \omega_1) \wedge (\ol{(1-\epsilon) \omega_0 + \epsilon \omega_1}) \\
	=& (1-\epsilon) \left( \ol{\partial}_b \omega_0 - (1-\epsilon) \frac{\gamma}{1-\gamma} \omega_0 \wedge \ol{\omega}_0 \right)
	+ \epsilon \left( \ol{\partial}_b \omega_1 - \epsilon \frac{\gamma}{1-\gamma} \omega_1 \wedge \ol{\omega}_1 \right) \\
	& - \frac{\gamma}{1-\gamma}\epsilon(1-\epsilon) \left( \omega_0 \wedge \ol{\omega}_1 + \omega_1 \wedge \ol{\omega}_0 \right) \\
	\ge& (1-\epsilon) \left( \ol{\partial}_b \omega_0 - (1-\epsilon) \frac{\gamma}{1-\gamma} \omega_0 \wedge \ol{\omega}_0 \right)
	+ \epsilon \left( \ol{\partial}_b \omega_1 - \epsilon \frac{\gamma}{1-\gamma} \omega_1 \wedge \ol{\omega}_1 \right) \\
	& - \frac{\gamma}{1-\gamma}\epsilon(1-\epsilon) \left( \frac{1}{\sqrt{\epsilon}}\omega_0 \wedge \ol{\omega}_0 + \sqrt{\epsilon}\omega_1 \wedge \ol{\omega}_1 \right) \\
	=& (1-\epsilon) \left( \ol{\partial}_b \omega_0 - (1-\epsilon+\sqrt{\epsilon}) \frac{\gamma}{1-\gamma} \omega_0 \wedge \ol{\omega}_0 \right) \\
	& + \epsilon \left( \ol{\partial}_b \omega_1 - (\epsilon + (1-\epsilon)\sqrt{\epsilon}) \frac{\gamma}{1-\gamma} \omega_1 \wedge \ol{\omega}_1 \right).
\end{align*}
Now since $\gamma < \gamma_0$, if we choose sufficiently small $\epsilon>0$, then 
\[
(1-\epsilon+\sqrt{\epsilon}) \frac{\gamma}{1-\gamma} < \frac{\gamma_0}{1-\gamma_0}.
\]
This implies that
\[
	\ol{\partial}_b \omega_0 - (1-\epsilon+\sqrt{\epsilon}) \frac{\gamma}{1-\gamma} \omega_0 \wedge \ol{\omega}_0 \ge 0 \quad \text{ on } \Null.
\]
Also, the assumption $\ol{\partial}_b \omega_1 > 0$ yields that 
\[
	\ol{\partial}_b \omega_1 - (\epsilon + (1-\epsilon)\sqrt{\epsilon}) \frac{\gamma}{1-\gamma} \omega_1 \wedge \ol{\omega}_1 > 0 \quad \text{ on } \Null.
\]
for sufficiently small $\epsilon > 0$. Therefore,
\[
	\ol{\partial}_b \omega_{\epsilon} - \frac{\gamma}{1 - \gamma} \omega_{\epsilon} \wedge \ol{\omega}_{\epsilon} > 0 \quad \text{ on } \Null
\]
holds for some $\epsilon > 0$. This shows that $DF_w(M) \le DF_s(M)$.

The inequality $S_s(M) \le S_w(M)$ can be shown in a similar manner.
Suppose that, for some $\gamma_0 > 1$ and positive trivialization $\eta_0$, 
\[
-\ol{\partial}_b \omega_{\eta_0} - \frac{\gamma_0}{\gamma_0  - 1} \omega_{\eta_0} \wedge \ol{\omega}_{\eta_0} \ge 0 \quad \text{ on } \Null.
\]
From Lemma \ref{lem:D'Angelo 1-form-perturbation}, there exists $\eta_\epsilon$ such that $\omega_{\eta_\epsilon} = (1-\epsilon) \omega_{\eta_0} + \epsilon \omega_{\eta_1}$ for all $-1 < \epsilon < 0$. We denote $\omega_{\eta_0}$, $\omega_{\eta_1}$ and $\omega_{\eta_\epsilon}$ by $\omega_0$, $\omega_1$ and $\omega_\epsilon$, respectively. 
Then, for each $\gamma > \gamma_0$, it follows that 
\begin{align*}
	& -\ol{\partial}_b \omega_{\epsilon} - \frac{\gamma}{\gamma-1} \omega_{\epsilon} \wedge \ol{\omega}_{\epsilon} \\
	\ge & (1-\epsilon) \left( -\ol{\partial}_b \omega_0 - (1-\epsilon+\sqrt{-\epsilon}) \frac{\gamma}{\gamma-1} \omega_0 \wedge \ol{\omega}_0 \right) \\
	& + (-\epsilon) \left( \ol{\partial}_b \omega_1 + (\epsilon - (1-\epsilon)\sqrt{-\epsilon}) \frac{\gamma}{\gamma-1} \omega_1 \wedge \ol{\omega}_1 \right) > 0
\end{align*}
on $\Null$ if we choose $\epsilon < 0$ sufficiently closed to zero.

\bigskip

Now assume that $\opa_b \omega_{\eta_1} < 0$ on $\Null$. 
In a similar method as above, $DF_w(M) \le DF_s(M)$ can be shown by choosing $\epsilon < 0$ sufficiently closed to zero, 
and $S_s(M) \le S_w(M)$ can be shown by choosing sufficiently small $\epsilon > 0$.

\end{proof}

A criterion for $DF_s(\O) > 0$ and $S_s(\O) < \infty$ follows from the definitions of $DF_s(M)$ and $S_s(M)$.

\begin{Corollary}
\label{cor:existence of indices}
Let $\O$ be a relatively compact domain in a complex manifold $\widetilde{M}$ with smooth pseudoconvex boundary $M$.
\begin{enumerate}
\item Assume that there exists a positive trivialization $\eta$ of $\C \otimes TM/T^{1,0}_M \oplus T^{0,1}_M$ such that $\opa_b \omega_{\eta} > 0$ on $\Null$. Then, it holds that 
\[
0 < DF_s(M) = DF_s(\O) =DF_w(\O) = DF_w(M).
\]
\item Assume that there exists a positive trivialization $\eta$ of $\C \otimes TM/T^{1,0}_M \oplus T^{0,1}_M$ such that $\opa_b \omega_{\eta} + \omega_{\eta} \wedge \ol{\omega}_{\eta} < 0$ on $\Null$. Then, it holds that 
\[
S_w(M) = S_w(\O) = S_s(\O) = S_s(M) < \infty.
\]
\end{enumerate}
\end{Corollary}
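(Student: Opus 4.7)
The equalities inside the displayed conclusions of (1) and (2) are an immediate application of Theorem \ref{thm:DF,S-all same}. Indeed, the hypothesis of (1) fits the ``$\opa_b\omega_\eta>0$ on $\Null$'' alternative of that theorem verbatim, and in (2) the hypothesis gives $\opa_b\omega_\eta(L,\ol{L})<-|\omega_\eta(L)|^2\le 0$ for every nonzero $L\in\Null$, matching the ``$\opa_b\omega_\eta<0$ on $\Null$'' alternative. So the only genuine content of the corollary is to verify the quantitative bounds $DF_s(M)>0$ in (1) and $S_s(M)<\infty$ in (2).

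For (1), the plan is to exhibit a single admissible $\gamma\in(0,1)$ for $DF_s(M)$. Fix an auxiliary Hermitian metric on $T^{1,0}_M$. Since $M$ is compact and $\opa_b\omega_\eta$ is a continuous, strictly positive definite Hermitian form on the closed subset $\Null$, on the compact space of unit vectors in $\Null$ one obtains a uniform lower bound $\opa_b\omega_\eta(L,\ol{L})\ge c>0$, while the continuous nonnegative quantity $\omega_\eta\wedge\ol{\omega}_\eta(L,\ol{L})=|\omega_\eta(L)|^2$ is bounded above by some constant $K$. Any $\gamma\in(0,1)$ satisfying $\gamma K/(1-\gamma)<c$ then verifies
\[
\opa_b\omega_\eta-\frac{\gamma}{1-\gamma}\,\omega_\eta\wedge\ol{\omega}_\eta>0 \quad\text{on }\Null,
\]
and such $\gamma$ exists since the left-hand side tends to $0$ as $\gamma\searrow 0$. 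Hence $DF_s(M)\ge\gamma>0$, and the chain of equalities from Theorem \ref{thm:DF,S-all same} transports this positivity to all four indices.

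For (2), the argument is parallel. Rewrite the hypothesis as strict positivity of $-\opa_b\omega_\eta-\omega_\eta\wedge\ol{\omega}_\eta$ on $\Null$, extract a uniform positive lower bound $c$ for this form and an upper bound $K$ for $\omega_\eta\wedge\ol{\omega}_\eta$ exactly as above, and use the decomposition
\[
-\opa_b\omega_\eta-\frac{\gamma}{\gamma-1}\,\omega_\eta\wedge\ol{\omega}_\eta
=\bigl(-\opa_b\omega_\eta-\omega_\eta\wedge\ol{\omega}_\eta\bigr)-\frac{1}{\gamma-1}\,\omega_\eta\wedge\ol{\omega}_\eta.
\]
Choosing $\gamma>1$ so large that $K/(\gamma-1)<c$ makes the right-hand side strictly positive on $\Null$, whence $S_s(M)\le\gamma<\infty$, and Theorem \ref{thm:DF,S-all same} again propagates this finiteness to the other three indices.

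The only mild care needed is that $\Null$ may have jumping fiber dimension, so one should extract the uniform bounds by working on the compact total space of unit vectors in $\Null$ (closed inside the unit bundle of $T^{1,0}_M$) where the continuous quantities attain their extrema. Beyond this, no obstacle is anticipated; no ingredient beyond Theorem \ref{thm:DF,S-all same} and this elementary compactness argument is required.
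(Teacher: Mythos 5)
Your proposal is correct and follows essentially the same route as the paper: the equalities come from Theorem \ref{thm:DF,S-all same} (using in part (2) that $\opa_b\omega_\eta + \omega_\eta\wedge\ol{\omega}_\eta < 0$ forces $\opa_b\omega_\eta < 0$ on $\Null$), and the bounds $DF_s(M)>0$, $S_s(M)<\infty$ come from compactness of $M$ together with the same splitting $-\opa_b\omega_\eta-\tfrac{\gamma}{\gamma-1}\omega_\eta\wedge\ol{\omega}_\eta=(-\opa_b\omega_\eta-\omega_\eta\wedge\ol{\omega}_\eta)-\tfrac{1}{\gamma-1}\omega_\eta\wedge\ol{\omega}_\eta$ that the paper uses. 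You merely spell out the compactness step (uniform bounds $c$ and $K$ on the closed set of unit vectors in $\Null$) that the paper leaves implicit, and your sign bookkeeping in the displayed identity is the correct one.
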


\begin{proof}
Since $\opa_b \omega_{\eta} + \omega_{\eta} \wedge \ol{\omega}_{\eta} < 0$ on $\Null$ implies that $\opa_b \omega_{\eta} < 0$ on $\Null$,
Theorem \ref{thm:DF,S-all same} yields the both equalities of indices. 
When $\opa_b \omega_{\eta} > 0$ on $\Null$, it follows that for sufficiently small $\gamma > 0$, 
\[
\opa_b \omega_{\eta} - \frac{\gamma}{1-\gamma} \omega_{\eta} \wedge \ol{\omega}_{\eta} > 0
\]
holds on $\Null$ because our $M$ is compact, hence, $DF_s(M) > 0$.

When $\opa_b \omega_{\eta} + \omega_{\eta} \wedge \ol{\omega}_{\eta} < 0$ on $\Null$, it follows that for sufficiently large $\gamma > 1$, 
\[
\opa_b \omega_{\eta} + \frac{\gamma}{\gamma-1} \omega_{\eta} \wedge \ol{\omega}_{\eta}
= \left(\opa_b \omega_{\eta} + \omega_{\eta}\wedge \ol{\omega}_{\eta} \right)-  \frac{1}{\gamma-1} \omega_{\eta} \wedge \ol{\omega}_{\eta} < 0
\]
holds on $\Null$ because our $M$ is compact, hence, $S_s(M) < \infty$.
\end{proof}

We give some classes of weakly pseudoconvex domains that satisfy $\opa_b \omega_{\eta} > 0$ on $\Null$.
This gives a generalization of the result \cite[Theorem 1.1]{Yum2} obtained by the second author previously.

\begin{Corollary} \label{cor:main cor 2}
\label{cor:typical-cases}
Let $\O$ be a relatively compact domain in a complex manifold $\widetilde{M}$ with smooth pseudoconvex boundary $M$.
Then
\begin{align*}
	0 < DF_s(M) = DF_s(\O) & =   DF_w(\O) = DF_w(M), \\
	S_w(M) = S_w(\O) & = S_s(\O) = S_s(M)
\end{align*}
if one of the following conditions is satisfied: 
\begin{enumerate}
\item The ambient manifold $\widetilde{M}$ is Stein, in particular, when $\widetilde{M} = \C^n$;
\item The domain $\O$ is Takeuchi 1-convex;
\item The boundary $M$ has positive Diederich--Forn{\ae}ss index in the strong sense, i.e., $DF_s(M) > 0$.
\end{enumerate}
\end{Corollary}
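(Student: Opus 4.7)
The plan is to reduce all three cases to the hypothesis of Corollary \ref{cor:existence of indices}(1) combined with Theorem \ref{thm:DF,S-all same}: I will exhibit in each case a positive trivialization $\eta$ of $\C\otimes TM/(T^{1,0}_M\oplus T^{0,1}_M)$ such that $\opa_b\omega_\eta > 0$ on $\Null$. Once this is in hand, Corollary \ref{cor:existence of indices}(1) immediately yields the Diederich--Forn{\ae}ss chain $0<DF_s(M)=DF_s(\O)=DF_w(\O)=DF_w(M)$, while Theorem \ref{thm:DF,S-all same} applied with $\eta_1:=\eta$ yields the Steinness chain $S_w(M)=S_w(\O)=S_s(\O)=S_s(M)$.

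Case (3) will be essentially by definition. If $DF_s(M)>0$, then one can choose $\gamma\in(0,1)$ and a positive trivialization $\eta$ such that $\opa_b\omega_\eta - \frac{\gamma}{1-\gamma}\,\omega_\eta\wedge\ol{\omega}_\eta > 0$ on $\Null$. Since $\omega_\eta\wedge\ol{\omega}_\eta(L,\ol{L})=|\omega_\eta(L)|^2\ge 0$ for $L\in\Null$, the strict positivity $\opa_b\omega_\eta>0$ on $\Null$ follows a fortiori.

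Cases (1) and (2) will be routed through the strong Oka condition and Proposition \ref{prop:brunella}. For (2), Takeuchi 1-convexity of $\O$ provides, by definition, a smooth defining function $\rho$ of $\O$ such that $-\log(-\rho)$ is strictly plurisubharmonic on $\O\cap W$, which is exactly the strong Oka condition; Proposition \ref{prop:brunella} then gives $\opa_b\omega_\rho>0$ on $\Null$, and we take $\eta=\eta_\rho$. For (1), the classical Diederich--Forn{\ae}ss theorem for pseudoconvex domains in Stein manifolds supplies a defining function $\rho$ and an exponent $\gamma_0\in(0,1)$ with $-(-\rho)^{\gamma_0}$ strictly plurisubharmonic on $\O\cap W$. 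The identity
\begin{equation*}
i\pa\opa\bigl(-\log(-\rho)\bigr)=\frac{i\pa\opa\bigl(-(-\rho)^{\gamma_0}\bigr)}{\gamma_0(-\rho)^{\gamma_0}}+\gamma_0\,\frac{i\pa\rho\wedge\opa\rho}{\rho^2}
\end{equation*}
shows that the first summand dominates a positive constant multiple of a Hermitian metric on $\O\cap W$ (since $(-\rho)^{\gamma_0}$ is bounded above), while the second is non-negative; hence the strong Oka condition holds and Proposition \ref{prop:brunella} again gives $\opa_b\omega_\rho>0$ on $\Null$.

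The main obstacle is cosmetic rather than conceptual: it consists in matching the precise formulation of ``Takeuchi 1-convex'' used in the paper with the strong Oka condition of Proposition \ref{prop:brunella}, and in the Stein case ensuring that the Diederich--Forn{\ae}ss exhaustion can be realized with strict plurisubharmonicity bounded below by a Hermitian metric on a full one-sided neighborhood of $M$, as required by Proposition \ref{prop:brunella}. Both are standard consequences of known constructions but need to be recorded explicitly.
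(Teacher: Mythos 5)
Your treatment of cases (2) and (3) coincides with the paper's: for (3) you extract $\opa_b\omega_\eta>0$ on $\Null$ from the defining inequality of $DF_s(M)>0$ using $\omega_\eta\wedge\ol{\omega}_\eta(L,\ol{L})=|\omega_\eta(L)|^2\ge 0$, and for (2) you feed the strong Oka condition (which is literally Definition \ref{def:takeuchi}) into Proposition \ref{prop:brunella}; the conclusion then follows from Theorem \ref{thm:DF,S-all same} together with Corollary \ref{cor:existence of indices}(1), exactly as in the paper. For case (1), however, you take a genuinely different route. The paper's primary argument does not invoke the Diederich--Forn{\ae}ss theorem at all: it takes a strictly plurisubharmonic exhaustion $\varphi$ of the Stein manifold $\widetilde{M}$, replaces an arbitrary positive trivialization $\eta$ by $\widetilde{\eta}:=\exp(r\varphi)\eta$, and uses Lemma \ref{lem:two D'Angelo 1-forms-relation} to get $\opa_b\omega_{\widetilde{\eta}}=\opa_b\omega_\eta-r\,\pa_b\opa_b\varphi>0$ on $\Null$ for sufficiently negative $r$, by compactness of $M$. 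This is cheaper and more self-contained (it is the direct analogue of the Kohn-weight trick), whereas your reduction of (1) to (2) is the route the paper only records as an alternative, citing Harrington--Shaw for the fact that such domains are Takeuchi $1$-convex. If you insist on deriving the strong Oka condition from a Diederich--Forn{\ae}ss exhaustion yourself, be aware that the issue you call cosmetic is the real one: pointwise strict plurisubharmonicity of $-(-\rho)^{\gamma_0}$ on the noncompact set $\O\cap W$ does not give the uniform bound $i\pa\opa\bigl(-(-\rho)^{\gamma_0}\bigr)\ge c\,(-\rho)^{\gamma_0}\,g$ that your identity actually requires; after dividing by $\gamma_0(-\rho)^{\gamma_0}$ both numerator and denominator vanish at $M$, so ``$(-\rho)^{\gamma_0}$ is bounded above'' is not by itself a sufficient reason. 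You need the quantitative Hessian estimate from the Diederich--Forn{\ae}ss construction, or you should simply cite Harrington--Shaw, Theorem 1.4. With that reference (or with the paper's $\exp(r\varphi)$ perturbation) supplied, your proof is complete.
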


Here Takeuchi 1-convexity is defined as follows.

\begin{Definition}[\cite{diederich-ohsawa}]
\label{def:takeuchi}
Let $\Omega$ be a relatively compact domain in a complex manifold $\widetilde{M}$ with smooth boundary $M$.
The domain $\Omega$ is said to be \emph{Takeuchi 1-convex} if there exists a smooth defining function $\rho \colon \widetilde{M} \to \R$ of $\Omega$
satisfying the strong Oka condition near $M$: 
\[
i\pa\opa (-\log (-\rho)) \geq g \quad \text{on $\Omega \cap W$}
\]
for some hermitian metric $g$ on $\widetilde{M}$ and an open neighborhood $W \supset M$.
\end{Definition}

\begin{proof}[Proof of Corollary \ref{cor:main cor 2}]
(1) When $\widetilde{M}$ is Stein, we have a strictly plurisubharmonic exhaustion function $\varphi$ on $\widetilde{M}$.
For any trivialization $\eta$ of $\C \otimes TM/T^{1,0}_M \oplus T^{0,1}_M$, define $\widetilde{\eta} := \exp(r \varphi) \eta$ for some constant $r \in \RR$. Then by Lemma \ref{lem:two D'Angelo 1-forms-relation}, 
\[
	\opa_b \omega_{\widetilde{\eta}} = \opa_b \omega_{\eta} - r \pa_b \opa_b \varphi.
\]
Since the boundary $\pa\O$ is compact and $i \pa\opa\varphi$ is positive on $\widetilde{M}$, one can have $\opa_b \omega_{\widetilde{\eta}} > 0$ (or $\opa_b \omega_{\widetilde{\eta}} < 0 $) on $\Null$ by choosing sufficiently negative $r<0$ (or sufficiently positive $r>0$, respectively).

(2) When $\Omega$ is Takeuchi 1-convex, the existence of a positive trivialization $\eta$ with $\opa_b \omega_{\eta} > 0$ on $\Null$
follows from Proposition \ref{prop:brunella}, hence, the conclusion follows. 
In fact, this argument gives another proof for the first case 
since any smoothly bounded pseudoconvex domain $\O$ in a Stein manifold 
is Takeuchi 1-convex (see Harrington and Shaw \cite[Theorem 1.4]{harrington-shaw}; cf.  Diederich and Forn{\ae}ss \cite{diederich-fornaess}, and Ohsawa and Sibony \cite{ohsawa-sibony}).

(3) When $DF_s(M) > 0$, the definition of the Diederich--Forn{\ae}ss index in the strong sense implies the existence of positive trivialization $\eta$ such that $\opa_b \omega_\eta > 0$ on $\Null$.
\end{proof}

\begin{Remark}
\label{rem:gap adachi}
Takeuchi 1-convexity is much weaker than being in a Stein manifold. 
Actually, Diederich and Ohsawa \cite{diederich-ohsawa} observed that certain 
smoothly bounded domain with Levi-flat boundary, which cannot be realized in a Stein manifold due to the maximum principle, can be Takeuchi 1-convex. 
By Ohsawa and Sibony \cite{ohsawa-sibony}, Takeuchi 1-convexity is a sufficient condition for $\Omega$ to have positive Diederich--Forn{\ae}ss index in the strong sense.
We do not know whether it is also a necessary condition, although the first author had claimed it in \cite[Theorem 2.4]{Adachi1} by error. 
We refer the reader to a recent survey by Fu and Shaw \cite{fu-shaw}.
\end{Remark}

Before closing this section, we remark that an embedded compact complex manifold gives an obstruction for the positivity or negativity of $\opa_b \omega_{\eta}$ on $\Null$.

\begin{Proposition}
Let $M$ be a compact pseudoconvex CR manifold of hypersurface type. 
Assume that there is a CR embedding $\iota \colon A \to M$ of a compact complex manifold $A$ of positive dimension.
Then, there is no positive trivialization $\eta$ such that $\opa_b \omega_{\eta} > 0 $ on $\Null|_{\iota(A)}$ nor $\opa_b \omega_{\eta} < 0 $ on $\Null|_{\iota(A)}$.
In particular, $DF_s(M) = 0$ and $S_s(M) = \infty$. 
\end{Proposition}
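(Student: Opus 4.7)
The plan is to show that under the hypothesis of an embedded compact complex submanifold $A$, the form $\iota^*\opa_b\omega_\eta$ gives rise to a $d$-exact real $(1,1)$-form on $A$, which cannot be sign-definite on a compact complex manifold of positive dimension.

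To start, I would verify the inclusion $T^{1,0}_A \subset \Null|_{\iota(A)}$: since $A$ is CR-embedded as a complex submanifold of $M$, for any $L, L' \in \Gamma(T^{1,0}_A)$ the bracket $[L, \overline{L'}]$ is tangent to $\iota(A)$, hence lies in $T^{1,0}_A \oplus T^{0,1}_A \subset T^{1,0}_M \oplus T^{0,1}_M$, so $\lambda_\eta(L, L') = \eta([L, \overline{L'}]) = 0$. Lemma \ref{lem:connection} then ensures that $\theta := \iota^*\omega_\eta$ is a well-defined $(1,0)$-form on $A$ with $\iota^*\alpha_\eta = \theta + \overline\theta$. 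Applying Proposition \ref{prop:boas-straube} together with $\C \otimes TA \subset \Null \oplus \overline{\Null}$, I get $d(\theta + \overline\theta) = \iota^*d\alpha_\eta = 0$, and decomposing by bidegree forces $\pa\theta = 0$ and $\opa\theta + \pa\overline\theta = 0$. Consequently $d\theta = \opa\theta$ on $A$ and $\overline{\opa\theta} = -\opa\theta$.

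Next, a direct computation from the defining formula of $\opa_b\omega_\eta$ --- using $\omega_\eta(L)|_{\iota(A)} = \theta(L)$ and that brackets of $T^{1,0}_A$-sections stay in $\C \otimes TA$ --- identifies the sesquilinear form $\iota^*\opa_b\omega_\eta$ on $T^{1,0}_A \times T^{0,1}_A$ with $-\opa\theta$. Setting
\[
	\Omega := -i\,\opa\theta = d(-i\theta),
\]
the relation $\overline{\opa\theta} = -\opa\theta$ makes $\Omega$ a real $d$-exact $(1,1)$-form on $A$, and the hypothesis $\opa_b\omega_\eta > 0$ on $\Null|_{\iota(A)}$ translates into $\Omega$ being a positive Kähler form on $A$ (symmetrically, $\opa_b\omega_\eta < 0$ on $\Null|_{\iota(A)}$ makes $-\Omega$ positive Kähler).

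For the contradiction, put $k := \dim_\C A \geq 1$, so $\Omega^k$ is a positive volume form on $A$ with $\int_A \Omega^k > 0$. But $\Omega^k = d\bigl(-i\theta \wedge \Omega^{k-1}\bigr)$ is $d$-exact of top degree, so Stokes' theorem gives $\int_A \Omega^k = 0$, a contradiction. The ``in particular'' statement follows at once: since $\omega_\eta \wedge \overline{\omega_\eta} \geq 0$ on $\Null$, any witness $(\eta, \gamma)$ with $\gamma \in (0,1)$ to $DF_s(M) > 0$ forces $\opa_b\omega_\eta > \tfrac{\gamma}{1-\gamma}\,\omega_\eta \wedge \overline{\omega_\eta} \geq 0$ on $\Null$, and any witness with $\gamma > 1$ to $S_s(M) < \infty$ forces $\opa_b\omega_\eta < 0$ on $\Null$; both are ruled out along $\iota(A)$. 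I expect the most delicate point to be the identification $\iota^*\opa_b\omega_\eta \equiv -\opa\theta$, since the defining formula for $\opa_b\omega_\eta$ involves extensions of vector fields off $\iota(A)$ and derivatives in the $\overline{L'}$-direction; however, the tangency $T^{0,1}_A \subset T^{0,1}_M$ ensures these derivatives only see the restriction $\omega_\eta|_{T^{1,0}_A} = \theta$, and the well-definedness on $\Null$ from Lemma \ref{lem:curvature} eliminates any ambiguity from the choice of transversal vector field, reducing the verification to a bracket manipulation intrinsic to $A$.
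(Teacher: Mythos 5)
Your proof is correct and follows essentially the same route as the paper: pull back $\omega_\eta$ to a $(1,0)$-form on $A$, use Proposition \ref{prop:boas-straube} to kill $\pa\theta$, and derive a contradiction from the fact that a sign-definite $(1,1)$-form on a compact complex manifold cannot be $d$-exact. The paper phrases the final step via the first Chern class of the trivial line bundle with connection $d+\beta$, which is just your Stokes computation in bundle language; your explicit verifications that $T^{1,0}_A\subset\Null|_{\iota(A)}$ and of the ``in particular'' clause are welcome additions but not a different argument.
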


\begin{proof}
Suppose the contrary that we have a positive trivialization $\eta$ 
such that $\opa_b \omega_{\eta} > 0$ on $\Null|_{\iota(A)}$.
Then, $\beta := \iota^*\omega_{\eta}$ is a smooth $(1,0)$-form on $A$ 
such that $i\opa \beta$ is a positive $(1,1)$-form on $A$. 
Proposition \ref{prop:boas-straube} yields $d\iota^* \alpha_\eta = 0$,
hence $\pa \beta = 0$ follows. 
Notice that 
$d + \beta$ defines a connection of the trivial $\C$-line bundle over $A$
and its curvature 2-form is $d\beta = \opa\beta$. 
This means that the first Chern class of the trivial $\C$-line bundle
is represented by a positive $(1,1)$-form $i\opa\beta/2\pi$.
This is a contradiction since $ 0 \neq \int_A (i\opa\beta/2\pi)^{\dim_\C A}$.

By the same argument, we see that there is no positive trivialization $\eta$ such that $\opa_b \omega_{\eta} < 0 $ on $\Null|_{\iota(A)}$.
\end{proof}


\section{\bf Levi-flat case}
\label{sect:levi-flat}

In this section, we illustrate our formula in smoothly bounded domain $\Omega$ with Levi-flat boundary $M$. Recall that a CR manifold $M$ of hypersurface type is said to be \emph{Levi-flat}
if $\Null = T^{1,0}_M$, i.e., the Levi form $\lambda_\eta$ vanishes identically on $T^{1,0}_M$. 
This condition is equivalent for $\Re T^{1,0}_M \subset TM$ to be integrable, and
Frobenius' theorem yields a smooth foliation of real codimension one, 
which is called the \emph{Levi foliation} of $M$. 
Each leaf of the Levi foliation is endowed with an integral complex structure by the CR structure $T^{1,0}_M$.
In short, a Levi-flat CR manifold of dimension $2n+1$ is a manifold foliated by complex manifolds of complex dimension $n$. 

For a Levi-flat CR manifold $M$, the bundle $\C \otimes TM/(T^{1,0}_M \oplus T^{0,1}_M)$ is called the \emph{normal bundle} of the Levi foliation, and we denote it by $N^{1,0}_M$. 
We fix a foliated atlas of $M$, say, $\{ (U_\mu, \varphi_\mu) \}$ where $\varphi_\mu = (z_\mu, t_\mu)\colon U_\mu  \to \D^n \times (-1,1)$ are positively oriented foliated charts. 
These coordinates change in the way that 
\[
z_\mu = z_\mu(z_\nu, t_\nu), \quad t_\mu = t_\mu(t_\nu)
\]
on $U_\mu \cap U_\nu$ and $z_\mu$ is holomorphic in $z_\nu$. 
Using these charts, $N^{1,0}_M$ is locally trivialized by $\pa/\pa t_\mu$ on $U_\mu$ and the transition function from $U_\nu$ to $U_\mu$ is given by $d t_\mu/d t_\nu$, which is constant on each plaque $\varphi_\nu^{-1}(\D^n \times \{ t_\nu \})$. 
Hence, the restriction of $N^{1,0}_M$ on each leaf is an $\R^*$-flat holomorphic line bundle.

We endow a hermitian metric $h$ of $N^{1,0}_M$. 
Since the restriction of $N^{1,0}_M$ on each leaf is a holomorphic line bundle, 
we may consider the Chern connection and its curvature for $N^{1,0}_M$ with respect to $h$ leafwise. 
We call them \emph{leafwise Chern connection} and \emph{leafwise curvature} respectively. 
Note that since the restriction of $N^{1,0}_M$ on each leaf is $\R^*$-flat, 
the connection form is globally well-defined leafwise $(1,0)$-form. 

We express the hermitian metric locally by 
\[
h_\mu = h\left(\frac{\pa}{\pa t_\mu}, \frac{\pa}{\pa t_\mu}\right)
\]
on $U_\mu$, and define $\eta := i\sqrt{h_\mu}dt_\mu$. Then, $\eta$ is a well-defined purely-imaginary 1-form on $M$ and gives a (positive) trivialization of $N^{1,0}_M$.
Conversely, a trivialization $\eta$ of $N^{1,0}_M$ gives a hermitian metric $h_\eta$ of $N^{1,0}_M$ by $h_\eta \colon N^{1,0}_M \to [0,\infty)$, $h_\eta(v) := |\eta(v)|^2$. 

\begin{Lemma}
The D'Angelo $(1,0)$-form $\omega_\eta$ agrees with the connection form of the leafwise Chern connection for $N^{1,0}_M$ up to a multiplicative constant.
\end{Lemma}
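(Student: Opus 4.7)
The approach is to verify the identity in a foliated chart and then conclude globally by the intrinsic nature of both sides. On the Levi-flat hypersurface $M$, one has $\Null = T^{1,0}_M$, so by Lemma \ref{lem:connection} the restriction $\omega_\eta|_{T^{1,0}_M}$ is a genuine leafwise $(1,0)$-form, independent of the choice of transversal vector field. Likewise, since $N^{1,0}_M$ restricts to a holomorphic hermitian line bundle on each leaf, its leafwise Chern connection form transforms as a genuine leafwise $(1,0)$-form under change of local frame. Thus once the identity is checked in one foliated chart, it extends globally as a coordinate-free statement on $T^{1,0}_M$.

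In a foliated chart $(U_\mu, (z_\mu, t_\mu))$, I would express $\eta = i\sqrt{h_\mu}\, dt_\mu$ and trivialize $N^{1,0}_M$ by the frame $\partial/\partial t_\mu$. With respect to this frame, the standard formula for the Chern connection of a hermitian holomorphic line bundle yields the leafwise connection form
\[
\theta_\mu = \partial_b \log h_\mu = \sum_j \frac{\partial \log h_\mu}{\partial z_\mu^j}\, dz_\mu^j.
\]
To compute $\omega_\eta$, I would select the convenient transversal vector field $T := -i h_\mu^{-1/2}\, \partial/\partial t_\mu$, which is purely-imaginary and satisfies $\eta(T) = 1$. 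For any $L = \partial/\partial z_\mu^j \in T^{1,0}_M$, since $\partial/\partial t_\mu$ commutes with $\partial/\partial z_\mu^j$, the bracket $[T, L]$ reduces to a scalar multiple of $\partial/\partial t_\mu$, and a short elementary computation produces $\omega_\eta(L) = \eta([T,L]) = \tfrac{1}{2}\, \partial_{z_\mu^j} \log h_\mu$.

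Comparing the two expressions yields $\omega_\eta = \tfrac{1}{2}\, \theta_\mu$ on $T^{1,0}_M|_{U_\mu}$, so the multiplicative constant is $1/2$, and the lemma follows. I do not foresee a genuine obstacle: the computation is a single Lie bracket, and the main conceptual input is simply that Levi-flatness provides a bona fide holomorphic structure on each leaf so that the leafwise Chern connection makes sense. The only minor care required is to verify that the chosen local $T$ is purely-imaginary and normalized, both of which are immediate from the explicit form of $\eta$; the global well-definedness of $\omega_\eta|_{T^{1,0}_M}$ under different choices of $T$ is then secured by Lemma \ref{lem:connection}.
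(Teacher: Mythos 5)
Your proposal is correct and follows essentially the same route as the paper: both choose the local transversal field $T=-ih_\mu^{-1/2}\,\partial/\partial t_\mu$, compute the single Lie bracket to get $\omega_\eta=\tfrac12\,\partial_b\log h_\mu$, compare with the Chern connection form $\partial_b\log h_\mu$, and invoke Lemma \ref{lem:connection} (with $\Null=T^{1,0}_M$ by Levi-flatness) for independence of the choice of $T$. The multiplicative constant $1/2$ you obtain matches the paper's computation.
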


\begin{proof}
We take a transverse vector field on $U_\mu$, 
\[
T_\mu = \frac{-i}{\sqrt{h_\mu}}\frac{\pa}{\pa t_\mu},
\]
normalized with respect to $\eta$. Thanks to Lemma \ref{lem:connection}, the D'Angelo $(1,0)$-form is 
\begin{align*}
\omega_\eta(X_p) & = \eta([T_\mu, X]_p)
= i \sum_{j=1}^n X^j \frac{\pa}{\pa z_\mu^j} \left(\frac{1}{\sqrt{h_\mu}}\right) \eta\left(\frac{\pa}{\pa t_\mu}  \right)\\
& =  \frac{1}{2} \sum_{j=1}^n X^j \frac{1}{h_\mu}\frac{\pa h_\mu}{\pa z_\mu^j} 
 =  \frac{1}{2} \pa_b \log h_\mu (X_p)
\end{align*}
for $p \in U_\mu$ and $X = \sum X^j \pa/\pa z_\mu^j \in \Gamma(T^{1,0}_M)$. 
Since the connection form is $\pa_b \log h_\mu$, 
the proof is completed. 
\end{proof}

\begin{Remark}
In \cite{Adachi1}, \cite{Adachi2} and \cite{Adachi-Brinkschulte}, 
D'Angelo $(1,0)$-form has already appeared as the connection form for the normal bundle of Levi foliation. 
For instance, the leafwise $(1,0)$-form $\alpha$ in \cite[\S2.4]{Adachi-Brinkschulte}
agrees with D'Angelo $(1,0)$-form, $\alpha =\omega_\eta$, where $\eta$ is the trivialization induced from given defining function of a Levi-flat real hypersurface.
\end{Remark}

Our main theorem recovers the formula for the Diederich--Forn{\ae}ss index of domains with Levi-flat boundary, which was claimed by the first author in \cite[Theorem 1.1]{Adachi1} (See Remark \ref{rem:gap adachi} for the gap in the proof), under an additional assumption that the positivity of the normal bundle.
\begin{Corollary}
\label{cor:levi-flat DF}
Let $\Omega$ be a relatively compact domain in a complex surface
with smooth Levi-flat boundary $M$. 
Assume that the normal bundle $N^{1,0}_M$ of the Levi foliation on $M$ admits a 
smooth hermitian metric with positive leafwise curvature.
Then, it holds that 
\[
	0 < DF_s(M) = DF_s(\O) =  DF_w(\O) = DF_w(M).
\]
\end{Corollary}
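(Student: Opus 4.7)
The plan is to reduce to a direct application of Corollary~\ref{cor:existence of indices}(1): it suffices to exhibit a positive trivialization $\eta$ of $N^{1,0}_M$ for which $\opa_b \omega_\eta > 0$ on $\Null$.

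Since $M$ is Levi-flat, the Levi form $\lambda_\eta$ vanishes identically and $\Null = T^{1,0}_M$ everywhere; in particular every trivialization of $N^{1,0}_M$ is automatically positive in the sense of Definition~\ref{def:DF,S indices}. Given the metric $h$ with positive leafwise curvature, I would take the canonical trivialization $\eta := i\sqrt{h_\mu}\,dt_\mu$ constructed just above from $h$ via the foliated charts, so that the whole question becomes intrinsic to $h$.

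By the lemma immediately preceding the corollary, $\omega_\eta = \tfrac{1}{2}\,\pa_b \log h_\mu$ on each chart $U_\mu$, so $\omega_\eta$ coincides up to the factor $\tfrac{1}{2}$ with the connection form of the leafwise Chern connection on $(N^{1,0}_M, h)$. Differentiating yields
\[
	\opa_b \omega_\eta \;=\; \tfrac{1}{2}\,\opa_b \pa_b \log h_\mu
\]
on each leaf, which is (up to the same positive constant) the leafwise Chern curvature of $(N^{1,0}_M, h)$, viewed as a hermitian form on $T^{1,0}_M$. The standing hypothesis that this curvature is positive translates directly into $\opa_b \omega_\eta(L,\ol{L}) > 0$ for every nonzero $L \in T^{1,0}_M = \Null$.

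With this input Corollary~\ref{cor:existence of indices}(1) applies and delivers $0 < DF_s(M) = DF_s(\O) = DF_w(\O) = DF_w(M)$. The entire argument rests on the identification of $\omega_\eta$ with half the leafwise Chern connection form, already furnished by the preceding lemma; the only delicate step is the bookkeeping that matches the paper's sign convention for evaluating $\opa_b\omega_\eta$ on pairs $(L,\ol{L})$ with the pairing implicit in the phrase ``positive leafwise curvature''. The complex-surface hypothesis plays no active role in this reduction beyond fixing the ambient framework in which the foliated-chart construction of $\eta$ was presented.
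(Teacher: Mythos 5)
Your proof is correct and follows essentially the same route as the paper's: construct $\eta$ from the metric $h$, use the preceding lemma to identify $\opa_b\omega_\eta$ with half the leafwise curvature form of $N^{1,0}_M$, and conclude from its assumed positivity. If anything, your citation of Corollary \ref{cor:existence of indices}(1) is slightly sharper than the paper's appeal to Theorem \ref{thm:DF,S-all same}, since it is the corollary (via compactness of $M$) that actually supplies the strict inequality $0 < DF_s(M)$.
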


\begin{proof}
We define $\eta$ using the hermitian metric $h$ of $N^{1,0}_M$ with positive leafwise curvature. Then, since the $(1,1)$-form $\opa_b \omega_\eta$ is exactly the half 
of the leafwise curvature form $i\opa_b \pa_b \log h$ of $N^{1,0}_M$, which is assumed to be positive, Theorem \ref{thm:DF,S-all same} applies.
\end{proof}

\begin{Remark}
\label{rem:brunella}
Brunella \cite[\S3.3]{Brunella} gave an example of $\Omega$ with $\dim_{\CC}\O = 2$ enjoying the conditions in Corollary \ref{cor:levi-flat DF}, and we can actually compute the exact value of $DF_s(\Omega)$ of this example based on Corollary \ref{cor:levi-flat DF}. See \cite[Example 4.5]{Adachi1} for its detail.  
\end{Remark}

When $\dim_{\CC} \Omega \geq 3$, Corollary \ref{cor:levi-flat DF} is still logically correct, 
but there is no such example due to the following theorem of Brinkschulte \cite{Brinkschulte}:

\begin{quote}
Let $\widetilde{M}$ be a complex manifold of dimension $\geq 3$. 
Then, there does not exist a compact Levi-flat real hypersurface $M$ in $\widetilde{M}$ 
whose normal bundle admits a smooth hermitian metric with positive leafwise curvature.
\end{quote}

\begin{Corollary}
Let $\Omega$ be a relatively compact domain with smooth Levi-flat boundary $M$
in a complex manifold of dimension $\geq 3$. 
Then, $DF_s(M) = 0$.
\end{Corollary}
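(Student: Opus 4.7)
The plan is to argue by contradiction, combining the characterization of $DF_s(M)$ on a Levi-flat boundary with Brinkschulte's theorem exactly as suggested by the discussion preceding the statement. Suppose towards a contradiction that $DF_s(M) > 0$. Then by Definition \ref{def:DF,S indices} there exist a positive trivialization $\eta$ of $N^{1,0}_M = \C \otimes TM/(T^{1,0}_M \oplus T^{0,1}_M)$ and some $\gamma \in (0,1)$ such that
\[
\ol{\partial}_b \omega_{\eta} - \frac{\gamma}{1-\gamma}\, \omega_{\eta} \wedge \ol{\omega}_{\eta} > 0 \quad \text{on } \Null.
\]

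Next I would use Levi-flatness to simplify this. Since $M$ is Levi-flat, $\Null = T^{1,0}_M$, so the inequality above holds on all of $T^{1,0}_M$. For any nonzero $L_p \in T^{1,0}_{M,p}$ the hermitian form $\omega_\eta \wedge \ol{\omega}_\eta$ evaluates to $|\omega_\eta(L_p)|^2 \geq 0$, and hence
\[
\opa_b \omega_\eta(L_p, \ol{L}_p) \;>\; \frac{\gamma}{1-\gamma}\, |\omega_\eta(L_p)|^2 \;\geq\; 0.
\]
Therefore $\opa_b \omega_\eta > 0$ on $T^{1,0}_M$ in the sense of hermitian forms.

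The final step is to reinterpret this positivity as a curvature condition on the normal bundle. By the lemma identifying the D'Angelo $(1,0)$-form with the leafwise Chern connection form, $\omega_\eta = \tfrac{1}{2}\pa_b \log h_\eta$ locally (where $h_\eta$ is the hermitian metric on $N^{1,0}_M$ canonically associated with $\eta$), so $\opa_b \omega_\eta = \tfrac{1}{2}\opa_b \pa_b \log h_\eta$ is (up to a positive multiplicative constant) the leafwise Chern curvature of $(N^{1,0}_M, h_\eta)$ restricted to each leaf. The positivity derived above thus says that $h_\eta$ is a smooth hermitian metric on $N^{1,0}_M$ with positive leafwise curvature. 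Since $\dim_{\C} \widetilde{M} \geq 3$, this contradicts the theorem of Brinkschulte quoted just before the statement, which asserts that no such metric can exist. Hence $DF_s(M) = 0$, as desired.

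The argument is essentially a dictionary translation between the two characterizations; the only place requiring a bit of care is the passage from the mixed inequality to pure positivity of $\opa_b \omega_\eta$, which is where Levi-flatness (forcing $\Null = T^{1,0}_M$) and the non-negativity of $\omega_\eta \wedge \ol{\omega}_\eta$ combine cleanly. The real content of the result is the deep theorem of Brinkschulte, which is invoked as a black box.
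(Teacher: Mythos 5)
Your proof is correct and follows essentially the same route as the paper: deduce from $DF_s(M)>0$ and $\Null = T^{1,0}_M$ that some trivialization satisfies $\opa_b\omega_\eta>0$ (using $\omega_\eta\wedge\ol\omega_\eta\ge 0$), identify this with positive leafwise curvature of $h_\eta$ on $N^{1,0}_M$, and contradict Brinkschulte's theorem. The extra detail you supply in passing from the mixed inequality to pure positivity is exactly the step the paper leaves implicit.
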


\begin{proof}
If $DF_s(M) > 0$, then there must be a trivialization $\eta$ such that $\opa_b \omega_\eta > 0$. However, this requires a hermitian metric $h_\eta$ of the normal bundle with positive leafwise curvature, and this is impossible by Brinkschulte's theorem. 
\end{proof}

We do not know whether we 
can relax the assumption in Corollary \ref{cor:levi-flat DF} to the weaker condition that $DF_s(\Omega) > 0$ as the first author had claimed in \cite[Theorem 1.1]{Adachi1}. 
We also do not know whether there is a smoothly bounded domain $\Omega$ with Levi-flat boundary and $DF_s(\Omega) > 0$ in a complex manifold of dimension $\geq 3$.

For Steinness indices, we have some restrictions to their values for compact Levi-flat CR manifolds. 
First, we observe that Steinness indices in the strong sense must diverge for domains with Levi-flat boundary. 

\begin{Proposition}
Let $\Omega$ be a relatively compact domain in a complex manifold 
with smooth Levi-flat boundary $M$. 
Assume that $\dim_{\CC} \Omega \geq 2$. 
Then, it holds that 
\[
S_s(\O) = S_s(M) = \infty.
\]
\end{Proposition}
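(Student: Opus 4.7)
The plan is to show $S_s(\O) = \infty$; the chain $S_w(M) \le S_w(\O) \le S_s(\O) \le S_s(M)$ from Theorem \ref{thm:4 kinds of DF,S-relation} then forces $S_s(M) = \infty$ as well. We argue by contradiction: assume that some exponent $\gamma > 1$, smooth defining function $\rho$ of $\O$, and open neighborhood $W$ of $M$ realize $\rho^\gamma$ as a strictly plurisubharmonic function on $\ol{\O}^{\complement} \cap W$.

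The core idea is to restrict $\rho^\gamma$ to the complex extensions of the leaves of the Levi foliation. Fix $p \in M$ and, exploiting the Levi-flatness of $M$, choose holomorphic coordinates $(z_1, \dots, z_n)$ on a neighborhood $U \subset \widetilde{M}$ of $p$ such that $M \cap U = \{y_n = 0\}$ with $y_n = \Im z_n$, $\O \cap U = \{y_n < 0\}$, and the leaves of the Levi foliation restricted to $U$ are the plaques $\{z_n = c\}$ for $c \in \R$. Write $\rho = y_n \cdot h$ with $h$ smooth and strictly positive on a neighborhood of $M$. For every $w \in \CC$ with $\Im w > 0$ small, the complex slice $L_w := \{z_n = w\} \cap U$ is a complex submanifold of dimension $n - 1 \ge 1$ (using $\dim_{\CC} \O \ge 2$) and, after shrinking $W$, lies in $\ol{\O}^{\complement} \cap W$. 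The factorization
\[
\rho^\gamma \big|_{L_w}(z') = (\Im w)^\gamma \, h(z', w)^\gamma
\]
shows that $z' \mapsto h(z', w)^\gamma$ is strictly plurisubharmonic on $L_w$ for every such $w$.

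Combining this with the framework of \S\ref{sect:levi-flat}, where $\omega_\rho$ is identified with the leafwise Chern connection form of the hermitian metric $h_{\eta_\rho}$ on the normal bundle $N^{1,0}_M$, the Hermitian inequality supplied by Lemma \ref{lem:stein-necessity} on $\Null = T^{1,0}_M$ translates into the statement that the positive continuous function $\psi := h_{\eta_\rho}^{-\gamma/(2(\gamma-1))}$ is leafwise plurisuperharmonic on $M$, and strictly so at every point where $\omega_\rho$ does not vanish. Since $M$ is compact, $\psi$ attains its global minimum on some leaf $L_0$, and the strong minimum principle for leafwise plurisuperharmonic functions propagates this minimum value along $L_0$ and, by continuity, along the closed leaf-saturated set $K := \{\psi = \min_M \psi\} \subset M$; in particular $\omega_\rho$ vanishes on $T^{1,0}_M$ over $K$.

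The main obstacle is to convert this vanishing into a genuine contradiction with the assumed strict plurisubharmonicity. The plan is to exploit that, at any $q \in K$, the $(n-1) \times (n-1)$ submatrix of $i\pa\opa \rho^\gamma$ in the directions tangent to the leaf through $q$ must remain strictly positive-definite for $y_n > 0$ small, while the expansion of its entries in $y_n$ in the foliated coordinates forces its leading coefficient to degenerate once the first-order leafwise derivatives of $h$ are shown to vanish on $K$. Carefully unwinding this asymptotic behavior via the Schur-complement identity used in the proof of Lemma \ref{lem:stein-necessity} will deliver the desired contradiction and complete the proof.
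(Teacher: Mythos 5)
Your proposal does not reach a proof: the last paragraph is a declared plan rather than an argument, and the plan cannot be carried out as described. The only information that survives the passage to the boundary from the hypothesis that $\rho^\gamma$ is strictly plurisubharmonic on $\ol{\O}^{\complement}\cap W$ is the \emph{weak} inequality $-\opa_b\omega_\rho-\tfrac{\gamma}{\gamma-1}\,\omega_\rho\wedge\ol{\omega}_\rho\ge 0$ on $T^{1,0}_M=\Null$ (this is exactly Lemma \ref{lem:stein-necessity}: the diagonal entries of $H'$ blow up as $y_n\searrow 0$ and only $\det H'\ge 0$ survives, so strictness is lost in the limit). That inequality merely says $S_w(M)\le\gamma<\infty$, which is not absurd --- Example \ref{ex:counter-example} is a Levi-flat boundary with $S_w(M)=1$. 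Accordingly, your leafwise minimum principle and the vanishing of $\omega_\rho$ on the minimizing saturated set $K$ are perfectly consistent conclusions (in the product example $\omega_\rho\equiv 0$ and $K=M$), and no asymptotic degeneration of the tangential block is forced at points of $K$: the interior strict positivity is allowed to decay to zero at the boundary, so a local Schur-complement analysis there cannot produce a contradiction. Some genuinely global use of the interior strictness is indispensable. The paper's proof supplies it on the \emph{interior} side: if $\rho^\gamma$ is strictly plurisubharmonic on $\ol{\O}^{\complement}\cap W$, then for small $\delta>0$ the enlargement $\O_\delta=\rho^{-1}(-\infty,\delta)$ is a smoothly bounded strictly pseudoconvex domain containing $M$; by Grauert's solution of the Levi problem it is a proper modification of a Stein space, and such a space cannot contain a compact Levi-flat hypersurface (a strictly plurisubharmonic exhaustion attains its maximum over $M$ at some point, and its restriction to the positive-dimensional leaf through that point violates the maximum principle). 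Then $S_s(\O)=\infty$, and $S_s(\O)\le S_s(M)$ gives $S_s(M)=\infty$. (For the abstract statement $S_s(M)=\infty$ the paper uses a different global device, the Stokes integration of Lemma \ref{lem:stokes} over the compact $M$; note that even that only rules out the boundary inequality in the strict form, again confirming that the weak boundary data you extract cannot suffice.)

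Two further technical problems, secondary to the main gap but also needing repair: (i) your choice of holomorphic coordinates flattening $M\cap U$ to $\{y_n=0\}$ with leaves $\{z_n=c\}$ presupposes that the Levi-flat hypersurface is locally holomorphically flattenable, which holds for real-analytic $M$ by Cartan's theorem but fails in general for merely $C^\infty$ hypersurfaces, the setting of this Proposition; the paper's arguments are arranged to avoid any such normalization. (ii) The sign of the exponent in $\psi=h_{\eta_\rho}^{-\gamma/(2(\gamma-1))}$ should be checked against your convention for $h_{\eta_\rho}$: with $\omega_\rho=\tfrac12\pa_b\log h$ as in \S\ref{sect:levi-flat}, the inequality $\opa_b\omega_\rho+\tfrac{\gamma}{\gamma-1}\omega_\rho\wedge\ol{\omega}_\rho\le 0$ is equivalent to leafwise plurisuperharmonicity of $h^{+\gamma/(2(\gamma-1))}$, i.e.\ of the positive power of $h$, not the negative one.
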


\begin{proof}
Since $S_s(\O) \leq S_s(M)$ from Theorem \ref{thm:4 kinds of DF,S-relation}, it suffices to show $S_s(\O) = \infty$.
Suppose that $S_s(\O) < \infty$. We have a defining function $\rho$ of $\O$ such that 
$\rho^\gamma$ is strictly plurisubharmonic on $\ol{\O}^{\complement} \cap W$
for some $\gamma > 1$ and $W \supset M$. 
Then, for some $\delta > 0$, the sublevel set $\Omega_\delta := \rho^{-1}(-\infty, \delta)$
is contained in $\Omega \cup W$ and has smooth strictly pseudoconvex boundary. 
From Grauert's solution to the Levi problem on complex manifolds \cite{grauert1, grauert2}, 
$\Omega_\delta$ is a proper modification of a Stein space, hence, 
cannot contain a closed Levi-flat real hypersurface $M$. 
This is a contradiction.
\end{proof}

We shall show that $S_s(M) = \infty$ is still true for an abstract Levi-flat CR manifold $M$
adapting the argument in \cite[Lemma 1]{Adachi2}.

\begin{Lemma}
\label{lem:stokes}
Let $M$ be a compact Levi-flat CR manifold of $\dim_{\R} M = 2n+1 \geq 3$,
and $\eta$ a trivialization of $N^{1,0}_M$. If $\opa_b \omega_\eta - \frac{1}{n}\omega_\eta \wedge \ol{\omega}_\eta \leq 0$ everywhere, then, $(\opa_b \omega_\eta - \frac{1}{n}\omega_\eta \wedge \ol{\omega}_\eta)^n = 0$ everywhere.
\end{Lemma}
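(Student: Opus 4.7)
The plan is to show $\int_M \eta \wedge \theta^n = 0$ via Stokes' theorem (unconditionally, without using the sign hypothesis), and then combine this with the sign hypothesis to upgrade to the pointwise statement $\theta^n = 0$. Write $\theta := \opa_b \omega_\eta - \tfrac{1}{n}\omega_\eta \wedge \ol{\omega}_\eta$. Since $\omega_\eta$ is a $(1,0)$-form, $\omega_\eta \wedge \omega_\eta = 0$, and the multinomial expansion of $\theta^n$ collapses to
\[
\theta^n = (\opa_b\omega_\eta)^n - (\opa_b\omega_\eta)^{n-1}\wedge\omega_\eta\wedge\ol{\omega}_\eta.
\]

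Two structural facts drive the Stokes argument. First, Proposition \ref{prop:boas-straube} applied in the Levi-flat case (where $\Null = T^{1,0}_M$) yields $d\alpha_\eta = 0$ on $T^{1,0}_M \oplus T^{0,1}_M$, from which one extracts both $\pa_b\omega_\eta = 0$ and the global identity $d\eta = -\eta\wedge\alpha_\eta = -\eta\wedge(\omega_\eta+\ol{\omega}_\eta)$; the latter follows from $d\eta(T,X) = -\alpha_\eta(X)$ together with the vanishing of the Levi form on horizontals. Second, in a foliation chart $(z_\mu, t_\mu)$ in which $\eta$ is proportional to $dt_\mu$, the form $d\omega_\eta$ differs from $\opa_b\omega_\eta$ only by terms carrying $dt_\mu$ (precisely because $\pa_b\omega_\eta = 0$), and those terms are killed by $\eta$; hence $\eta\wedge(d\omega_\eta)^k = \eta\wedge(\opa_b\omega_\eta)^k$ for every $k$, and the analogous replacement is valid inside every wedge product beginning with $\eta$. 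Using $d^2 = 0$, we have the exact form $(d\omega_\eta)^n = d\bigl(\omega_\eta\wedge(d\omega_\eta)^{n-1}\bigr)$. Combining Stokes' theorem on the closed manifold $M$ with the identity $d(\eta\wedge\beta) = d\eta\wedge\beta - \eta\wedge d\beta$ yields
\[
\int_M \eta\wedge(d\omega_\eta)^n = \int_M d\eta\wedge\omega_\eta\wedge(d\omega_\eta)^{n-1} = \int_M \eta\wedge\omega_\eta\wedge\ol{\omega}_\eta\wedge(d\omega_\eta)^{n-1},
\]
where the last equality follows from $d\eta\wedge\omega_\eta = -\eta\wedge(\omega_\eta+\ol{\omega}_\eta)\wedge\omega_\eta = \eta\wedge\omega_\eta\wedge\ol{\omega}_\eta$ (using $\omega_\eta\wedge\omega_\eta = 0$). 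Translating back via the $dt_\mu$-annihilation, the two terms in the expansion of $\eta\wedge\theta^n$ cancel and we obtain $\int_M \eta\wedge\theta^n = 0$.

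To conclude, the hypothesis $\theta \leq 0$ says that the leafwise Hermitian form $\theta$ is negative semidefinite at every point, so $-i\theta$ is a positive leafwise $(1,1)$-form; consequently $\theta^n = i^n(-i\theta)^n$ is $i^n$ times a nonnegative leafwise top-form, and $\eta\wedge\theta^n$ is a fixed unit constant $i^{n+1}$ times a globally nonnegative top-form on $M$. Since its integral vanishes, this nonnegative top-form vanishes identically, and because $\eta$ is nowhere zero we deduce $\theta^n = 0$ at every point of $M$. The main subtleties I anticipate are the careful derivation of the two structural identities $\pa_b\omega_\eta = 0$ and $d\eta = -\eta\wedge\alpha_\eta$ (both of which are consequences of Boas--Straube in the Levi-flat setting but must be justified coordinate-freely) together with the $i^n$ bookkeeping in the final sign argument; the Stokes computation itself, once those identities are established, is essentially formal.
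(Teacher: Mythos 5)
Your proof is correct and takes essentially the same route as the paper: the paper also proves the identity $\int_M \bigl(\opa_b \omega_\eta - \tfrac{1}{n}\omega_\eta \wedge \ol{\omega}_\eta\bigr)^n \wedge \eta = 0$ by a Stokes/integration-by-parts argument (which it delegates to the cited Lemma 1 of \cite{Adachi2}, and which you carry out explicitly via $\pa_b\omega_\eta = 0$, $d\eta = -\eta\wedge(\omega_\eta+\ol{\omega}_\eta)$, and the foliated-chart reduction of $d\omega_\eta$ to $\opa_b\omega_\eta$ under wedging with $\eta$), and then concludes pointwise vanishing from the semi-positivity of the integrand exactly as you do. The only difference is presentational: the paper writes the integrand directly as the exact form $d\bigl(\theta^{n-1}\wedge\omega_\eta\wedge\eta\bigr)$, whereas you expand $\theta^n$ into its two surviving terms and cancel their integrals against $\eta$; these amount to the same computation.
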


\begin{proof}
Suppose that $\opa_b \omega_\eta - \frac{1}{n}\omega_\eta \wedge \ol{\omega}_\eta \leq 0$ holds everywhere. Then, $(-1)^n(\opa_b \omega_\eta - \frac{1}{n} \omega_\eta \wedge \ol{\omega}_\eta)^n$ is a semi-positive leafwise $(n,n)$-form. On the other hand, the same argument as in \cite[Lemma 1]{Adachi2} yield
\begin{align*}
\int_M (\opa_b \omega_\eta - \frac{1}{n} \omega_\eta \wedge \ol{\omega}_\eta)^n \wedge \eta
& = \int_M d \left((\opa_b \omega_\eta - \frac{1}{n} \omega_\eta \wedge \ol{\omega}_\eta)^{n-1} \wedge \omega_\eta \wedge \eta\right)\\
& = 0.
\end{align*}
Hence, $(\opa_b \omega_\eta - \frac{1}{n} \omega_\eta \wedge \ol{\omega}_\eta)^n = 0$ everywhere.
\end{proof}

\begin{Proposition}
Let $M$ be a compact Levi-flat CR manifold of $\dim_{\R} M = 2n+1 \geq 3$. 
Then, $S_s(M) = \infty$. 
\end{Proposition}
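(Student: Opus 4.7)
The plan is a direct proof by contradiction, with Lemma \ref{lem:stokes} carrying essentially all the weight. Suppose for contradiction that $S_s(M)<\infty$. Then the definition of $S_s(M)$ furnishes a positive trivialization $\eta$ of $N^{1,0}_M$ and some $\gamma>1$ such that
\[
-\opa_b\omega_\eta - \frac{\gamma}{\gamma-1}\,\omega_\eta\wedge\ol\omega_\eta > 0 \quad \text{on } \Null = T^{1,0}_M.
\]
I would use this hypothesis to deduce that $\opa_b\omega_\eta - \frac{1}{n}\omega_\eta\wedge\ol\omega_\eta$ is pointwise negative definite as a Hermitian $(1,1)$-form on each leafwise $T^{1,0}$, which, via Lemma \ref{lem:stokes}, contradicts the nonvanishing of its leafwise top wedge power.

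The key algebraic observation I would rely on is that $\omega_\eta\wedge\ol\omega_\eta$ is a semi-positive, rank-at-most-one Hermitian $(1,1)$-form, because $(\omega_\eta\wedge\ol\omega_\eta)(L,\bar L)=|\omega_\eta(L)|^2\geq 0$. Since $\gamma/(\gamma-1)>0$, the displayed strict positivity then forces
\[
\opa_b\omega_\eta(L,\bar L) < -\frac{\gamma}{\gamma-1}|\omega_\eta(L)|^2 \leq 0
\]
for every nonzero $L\in T^{1,0}_M$, i.e.\ $\opa_b\omega_\eta<0$ strictly on $T^{1,0}_M$. Subtracting the semi-positive term $\frac{1}{n}\omega_\eta\wedge\ol\omega_\eta$ preserves the strict negativity, so in particular the hypothesis of Lemma \ref{lem:stokes} holds, and the lemma yields $(\opa_b\omega_\eta - \frac{1}{n}\omega_\eta\wedge\ol\omega_\eta)^n\equiv 0$ on $M$.

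To close the contradiction I would invoke the elementary fact that on an $n$-dimensional complex vector space (namely each leafwise $T^{1,0}$, of complex dimension $n$ since $\dim_{\R}M=2n+1$), the $n$-th wedge power of a negative definite Hermitian $(1,1)$-form is a nonvanishing $(n,n)$-form, namely $(-1)^n$ times a positive volume element. This contradicts the vanishing provided by Lemma \ref{lem:stokes}, completing the proof.

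I do not anticipate a serious obstacle: once Lemma \ref{lem:stokes} is in hand, the argument is a short exercise in sign bookkeeping for Hermitian forms of rank at most one, and it parallels how the embedded case in the preceding Proposition was handled. The only care needed is to check that the strict positivity hypothesis transfers to strict leafwise negative definiteness, which is immediate from the rank-one semi-positivity of $\omega_\eta\wedge\ol\omega_\eta$ and the sign of $\gamma/(\gamma-1)$.
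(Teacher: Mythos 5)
Your proof is correct and takes essentially the same route as the paper: both deduce from $S_s(M)<\infty$ that $\opa_b\omega_\eta - \frac{1}{n}\omega_\eta\wedge\ol{\omega}_\eta$ is strictly negative definite leafwise and then contradict Lemma \ref{lem:stokes} via the nonvanishing of the top leafwise wedge power of a definite Hermitian form. The only cosmetic difference is that you pass through strict negativity of $\opa_b\omega_\eta$ itself, whereas the paper passes through $\opa_b\omega_\eta + \omega_\eta\wedge\ol{\omega}_\eta < 0$; both follow immediately from the semi-positivity of $\omega_\eta\wedge\ol{\omega}_\eta$ and the sign of $\gamma/(\gamma-1)$.
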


\begin{proof}
Suppose that $S_s(M) < \infty$. Then, from the definition, we have a trivialization $\eta$ of $N^{1,0}_M$ 
such that $\opa_b \omega_\eta + \omega_\eta \wedge \ol{\omega}_\eta < 0$,
which implies $\opa_b \omega_\eta - \frac{1}{n}\omega_\eta \wedge \ol{\omega}_\eta < 0$ everywhere. This contradicts to Lemma \ref{lem:stokes}.
\end{proof}

\begin{Remark}
	The same argument above yields that the normal bundle of compact Levi-flat CR manifold $M$ of $\dim_{\R} M \geq 3$ does not admit a hermitian metric $h_\eta$ of negative leafwise curvature, i.e., 
	$\opa_b \omega_\eta < 0$. 
	This shows that, in view of Brunella's example (see Remark \ref{rem:brunella}), two sufficient conditions in Theorem \ref{thm:DF,S-all same} are not equivalent in general. 
\end{Remark}

A similar argument yields a dichotomy for $S_w(M)$.

\begin{Proposition}
Let $M$ be a compact Levi-flat CR manifold of $\dim_{\R} M = 3$. 
Then, we have either $S_w(M) = 1$ or $\infty$. 
\end{Proposition}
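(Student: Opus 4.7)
The plan is to establish the dichotomy by showing that $S_w(M) < \infty$ already forces $S_w(M) \leq 1$; the reverse inequality $S_w(M) \geq 1$ is part of Theorem \ref{thm:4 kinds of DF,S-relation}. The key input is Lemma \ref{lem:stokes} in its $n = 1$ form, which says that on a compact Levi-flat real $3$-manifold the pointwise inequality $\opa_b \omega_\eta - \omega_\eta \wedge \ol\omega_\eta \leq 0$ is automatically an equality.

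First I would suppose $S_w(M) < \infty$ and fix a positive trivialization $\eta$ of $N^{1,0}_M$ together with some $\gamma > 1$ such that $-\opa_b \omega_\eta - \frac{\gamma}{\gamma-1}\omega_\eta \wedge \ol\omega_\eta \geq 0$ on $\Null = T^{1,0}_M$. Since $\frac{\gamma}{\gamma-1} > 1$ and $\omega_\eta \wedge \ol\omega_\eta$ is a semi-positive $(1,1)$-form on the complex line $T^{1,0}_M$, subtracting $\omega_\eta \wedge \ol\omega_\eta$ from both sides of the equivalent inequality $\opa_b \omega_\eta \leq -\frac{\gamma}{\gamma-1}\omega_\eta \wedge \ol\omega_\eta$ yields
\[
\opa_b \omega_\eta - \omega_\eta \wedge \ol\omega_\eta \leq -\frac{2\gamma - 1}{\gamma - 1}\,\omega_\eta \wedge \ol\omega_\eta \leq 0
\]
on $T^{1,0}_M$, which is precisely the hypothesis of Lemma \ref{lem:stokes}.

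Lemma \ref{lem:stokes} then forces $\opa_b \omega_\eta = \omega_\eta \wedge \ol\omega_\eta$ pointwise on $M$. Substituting this equality back into $\opa_b \omega_\eta \leq -\frac{\gamma}{\gamma-1}\omega_\eta \wedge \ol\omega_\eta$ gives $\frac{2\gamma - 1}{\gamma - 1}\,\omega_\eta \wedge \ol\omega_\eta \leq 0$. Since $\omega_\eta \wedge \ol\omega_\eta \geq 0$ on $T^{1,0}_M$ and $\frac{2\gamma - 1}{\gamma - 1} > 0$, I conclude that $\omega_\eta \wedge \ol\omega_\eta \equiv 0$ on $\Null$, hence $\omega_\eta \equiv 0$ on $\Null$, and therefore also $\opa_b \omega_\eta \equiv 0$ on $\Null$.

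Finally, with $\omega_\eta \equiv 0$ and $\opa_b \omega_\eta \equiv 0$ on $\Null$, the defining inequality of $S_w(M)$ is trivially satisfied by the same $\eta$ for every $\gamma' > 1$: the expression $-\opa_b \omega_\eta - \frac{\gamma'}{\gamma'-1}\omega_\eta \wedge \ol\omega_\eta$ vanishes identically on $\Null$. Hence $S_w(M) \leq \gamma'$ for every $\gamma' > 1$, so $S_w(M) \leq 1$, and combined with $S_w(M) \geq 1$ from Theorem \ref{thm:4 kinds of DF,S-relation} we obtain $S_w(M) = 1$. The main subtlety is recognizing that the weak inequality defining $S_w(M)$, combined with the Stokes-type vanishing in Lemma \ref{lem:stokes}, produces the rigidity $\omega_\eta \equiv 0$ on $\Null$; this is what collapses the infimum to exactly $1$ rather than leaving it somewhere in $(1, \gamma]$.
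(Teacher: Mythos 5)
Your proof is correct and follows essentially the same route as the paper: derive $\opa_b \omega_\eta - \omega_\eta \wedge \ol{\omega}_\eta \le 0$ from $S_w(M) < \infty$, apply Lemma \ref{lem:stokes} with $n=1$ to upgrade this to an equality, deduce $\omega_\eta \equiv 0$ (and hence $\opa_b\omega_\eta \equiv 0$), and conclude that the defining inequality holds for every $\gamma' > 1$, so $S_w(M)=1$. The only cosmetic differences are that you track the explicit constant $\tfrac{2\gamma-1}{\gamma-1}$ where the paper passes through the intermediate inequality $\opa_b\omega_\eta + \omega_\eta\wedge\ol{\omega}_\eta \le 0$, and that the bound $S_w(M)\ge 1$ is immediate from the definition (the infimum is over $\gamma>1$) rather than needing Theorem \ref{thm:4 kinds of DF,S-relation}, which concerns embedded boundaries.
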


\begin{proof}
Suppose that $S_w(M) < \infty$. Then, from the definition, we have a trivialization $\eta$ of $N^{1,0}_M$ 
such that $\opa_b \omega_\eta + \omega_\eta \wedge \ol{\omega}_\eta \leq 0$,
which implies $\opa_b \omega_\eta - \omega_\eta \wedge \ol{\omega}_\eta \leq 0$. It follows from Lemma \ref{lem:stokes} that 
$\opa_b \omega_\eta - \omega_\eta \wedge \ol{\omega}_\eta = 0$ everywhere, hence,
$0 \leq 2 \omega_\eta \wedge \ol{\omega}_\eta  = \opa_b \omega_\eta + \omega_\eta \wedge \ol{\omega}_\eta \leq 0$, that is, $\omega_\eta = 0$. 
Using this $\eta$, we deduce that $S_w(M) = 1$. 
\end{proof}

\begin{bibdiv}
\begin{biblist}

\bib{Adachi1}{article}{
   author={Adachi, Masanori},
   title={A local expression of the Diederich-Fornaess exponent and the
   exponent of conformal harmonic measures},
   journal={Bull. Braz. Math. Soc. (N.S.)},
   volume={46},
   date={2015},
   number={1},
   pages={65--79},
}

\bib{Adachi2}{article}{
   author={Adachi, Masanori},
   title={A CR proof for a global estimate of the Diederich-Fornaess index
   of Levi-flat real hypersurfaces},
   conference={
      title={Complex analysis and geometry},
   },
   book={
      series={Springer Proc. Math. Stat.},
      volume={144},
      publisher={Springer, Tokyo},
   },
   date={2015},
   pages={41--48},
}

\bib{Adachi-Brinkschulte}{article}{
   author={Adachi, Masanori},
   author={Brinkschulte, Judith},
   title={Curvature restrictions for Levi-flat real hypersurfaces in complex
   projective planes},
   journal={Ann. Inst. Fourier (Grenoble)},
   volume={65},
   date={2015},
   number={6},
   pages={2547--2569},
}

\bib{Boas-Straube}{article}{
   author={Boas, Harold P.},
   author={Straube, Emil J.},
   title={de Rham cohomology of manifolds containing the points of infinite
   type, and Sobolev estimates for the $\overline\partial$-Neumann problem},
   journal={J. Geom. Anal.},
   volume={3},
   date={1993},
   number={3},
   pages={225--235},
}

\bib{Brinkschulte}{article}{
   author={Brinkschulte, Judith},
   title={On the normal bundle of Levi-flat real hypersurfaces},
   journal={Math. Ann.},
   volume={375},
   date={2019},
   number={1-2},
   pages={343--359},
}
\bib{Brunella}{article}{
   author={Brunella, Marco},
   title={Codimension one foliations on complex tori},
   journal={Ann. Fac. Sci. Toulouse Math. (6)},
   volume={19},
   date={2010},
   number={2},
   pages={405--418},
}
	
\bib{Chen}{article}{
   author={Chen, Bo-Yong},
   title={Bergman kernel and hyperconvexity index},
   journal={Anal. PDE},
   volume={10},
   date={2017},
   number={6},
   pages={1429--1454},
}

\bib{D'Angelo0}{article}{
   author={D'Angelo, John P.},
   title={Finite type conditions for real hypersurfaces},
   journal={J. Differential Geometry},
   volume={14},
   date={1979},
   number={1},
   pages={59--66 (1980)},
}
\bib{D'Angelo}{article}{
   author={D'Angelo, John P.},
   title={Iterated commutators and derivatives of the Levi form},
   conference={
      title={Complex analysis},
      address={University Park, PA.},
      date={1986},
   },
   book={
      series={Lecture Notes in Math.},
      volume={1268},
      publisher={Springer, Berlin},
   },
   date={1987},
   pages={103--110},
}

\bib{diederich-fornaess}{article}{
   author={Diederich, Klas},
   author={Fornaess, John Erik},
   title={Pseudoconvex domains: bounded strictly plurisubharmonic exhaustion
   functions},
   journal={Invent. Math.},
   volume={39},
   date={1977},
   number={2},
   pages={129--141},
}

\bib{diederich-ohsawa}{article}{
   author={Diederich, Klas},
   author={Ohsawa, Takeo},
   title={On the displacement rigidity of Levi flat hypersurfaces---the case
   of boundaries of disc bundles over compact Riemann surfaces},
   journal={Publ. Res. Inst. Math. Sci.},
   volume={43},
   date={2007},
   number={1},
   pages={171--180},
}

\bib{fu-shaw}{article}{
   author={Fu, Siqi},
   author={Shaw, Mei-Chi},
   title={Bounded plurisubharmonic exhaustion functions and Levi-flat
   hypersurfaces},
   journal={Acta Math. Sin. (Engl. Ser.)},
   volume={34},
   date={2018},
   number={8},
   pages={1269--1277},
}

\bib{grauert1}{article}{
   author={Grauert, Hans},
   title={On Levi's problem and the imbedding of real-analytic manifolds},
   journal={Ann. of Math. (2)},
   volume={68},
   date={1958},
   pages={460--472},
}

\bib{grauert2}{article}{
   author={Grauert, Hans},
   title={\"{U}ber Modifikationen und exzeptionelle analytische Mengen},
   journal={Math. Ann.},
   volume={146},
   date={1962},
   pages={331--368},
}

\bib{harrington}{article}{
   author={Harrington, Phillip S.},
   title={On Competing Definitions for the Diederich-Forn{\ae}ss Index},
   status={Preprint},
   eprint={arXiv:1907.03689},
}

\bib{harrington-shaw}{article}{
   author={Harrington, Phillip S.},
   author={Shaw, Mei-Chi},
   title={The strong Oka's lemma, bounded plurisubharmonic functions and the
   $\overline{\partial}$-Neumann problem},
   journal={Asian J. Math.},
   volume={11},
   date={2007},
   number={1},
   pages={127--139},
}

\bib{Liu1}{article}{
   author={Liu, Bingyuan},
   title={The Diederich-Forn\ae ss index I: For domains of non-trivial index},
   journal={Adv. Math.},
   volume={353},
   date={2019},
   pages={776--801},
}

\bib{Liu2}{article}{
   author={Liu, Bingyuan},
   title={The Diederich-Forn\ae ss index II: For domains of trivial index},
   journal={Adv. Math.},
   volume={344},
   date={2019},
   pages={289--310},
}

\bib{OkaVI}{article}{
   author={Oka, Kiyosi},
   title={Sur les fonctions analytiques de plusieurs variables. VI. Domaines
   pseudoconvexes},
   journal={T\^{o}hoku Math. J.},
   volume={49},
   date={1942},
   pages={15--52},
}

\bib{ohsawa-sibony}{article}{
   author={Ohsawa, Takeo},
   author={Sibony, Nessim},
   title={Bounded p.s.h. functions and pseudoconvexity in K\"{a}hler manifold},
   journal={Nagoya Math. J.},
   volume={149},
   date={1998},
   pages={1--8},
}

\bib{Yum1}{article}{
   author={Yum, Jihun},
   title={On the Steinness Index},
   journal={J. Geom. Anal.},
   volume={29},
   date={2019},
   number={2},
   pages={1583--1607},
}

\bib{Yum2}{article}{
   author={Yum, Jihun},
   title={CR-invariance of the Steinness index},
   status={Preprint},
   eprint={arXiv:1908.01214},
}

\end{biblist}
\end{bibdiv}

\end{document}